\documentclass[10pt,a4paper]{amsart}
\usepackage[english]{babel}
\usepackage[T1]{fontenc}
\usepackage{indentfirst}
\usepackage{fancyhdr}
\usepackage{amsfonts}
\usepackage{amsmath}
\usepackage{amsthm, mathrsfs}
\usepackage{amssymb, enumitem}
\usepackage{hyperref}
\usepackage{theoremref}
\usepackage{bbm}
\usepackage[normalem]{ulem}
\newtheorem{thm}{Theorem}[section]
\newtheorem{lem}[thm]{Lemma}
\newtheorem{cor}[thm]{Corollary}
\newtheorem{prop}[thm]{Proposition}

\newtheorem{probl}{}

\theoremstyle{definition}
\newtheorem{rem}[thm]{Remark}
\newtheorem{dfn}[thm]{Definition}

\numberwithin{equation}{section}
\newcommand{\be}{\begin{equation}}
\newcommand{\ee}{\end{equation}}

\def\NN{\mathbb{N}}

\def\RR{\mathbb{R}}

\def\B{{\mathcal B}}

\def\P{{\mathcal P}}

\def\C{{\mathcal C}}

\def\dd{{\mathrm{d}}}
\def\tr{{\mathrm{tr}}}
\newcommand{\ind}{\text{\Large $\mathbbm{1}$}}

\newcommand{\Pos}{\mathrm{Pos}}
\newcommand{\supp}{\mathrm{supp}}
\renewcommand{\sp}{\mathfrak{sp}}

\usepackage{tikz}
\usepackage{tikz-cd}

\makeatletter
\@namedef{subjclassname@2020}{%
  \textup{2020} Mathematics Subject Classification}
\makeatother

\title[Moment problem for algebras generated by a nuclear space]{\bf Moment problem for\\ algebras generated by a nuclear space}
\author[Infusino, Kuhlmann, Kuna, Michalski]{Maria Infusino, Salma Kuhlmann, Tobias Kuna, Patrick Michalski}
\address[M. Infusino]{\newline \indent Dipartimento di Matematica e Informatica, Universit\`{a} degli Studi di Cagliari,\newline \indent
Via Ospedale 72,
09124 Cagliari, Italy.}
\email{maria.infusino@unica.it}
\address[S. Kuhlmann, P. Michalski]{\newline \indent Fachbereich Mathematik und Statistik, Universit\"at Konstanz,\newline \indent
Universit\"atstrasse 10,
78457 Konstanz, Germany.}
\email{salma.kuhlmann@uni-konstanz.de}
\email{patrick.michalski@googlemail.com}
\address[T. Kuna]{\newline \indent Dipartimento di Ingegneria, Scienze dell' Informazione e Matematica,\newline \indent Universit\`{a} degli Studi dell'Aquila, Via Vetoio, 67100, L'Aquila,
Italy}
\email{tobias.kuna@univaq.it}

\subjclass[2020]{Primary 44A60, 46M40, 28C20}
\keywords{moment problem; infinite dimensional moment problem; nuclear space; projective limit; Prokhorov's condition.}
\date{\today}

\begin{document}
\begin{abstract}
We establish a criterion for the existence of a representing Radon measure for linear functionals defined on a unital commutative real algebra $A$, which we assume to be generated by a vector space $V$ endowed with a Hilbertian seminorm $q$.  Such a general criterion provides representing measures with support contained in the space of characters of $A$ whose restrictions to $V$ are $q-$continuous. This allows us in turn to prove existence results for the case when $V$ is endowed with a nuclear topology. In particular, we apply our findings to the symmetric tensor algebra of a nuclear space.
\end{abstract}
\maketitle

\section*{Introduction}

\noindent
Given a unital commutative (not necessarily finitely generated) $\RR$--algebra~$A$ and a linear subspace $V$ of $A$, we say that \emph{$A$ is generated by $V$} if there exists a set of generators $G$ of $A$ such that $V$ is the linear span of $G$, i.e. $V=span(G)$. Equivalently, $A$ is generated by $V$ if $V$ contains a set of generators of $A$. This article deals with the moment problem for $A$ generated by a vector space $V$ which is endowed with a topology $\tau_V$ compatible with the addition and the scalar multiplication, namely $(V, \tau_V)$ is a topological vector space. Moreover, we always assume that the \emph{character space} of~$A$, i.e., the set~$X(A)$ of all $\RR$--algebras homomorphisms from~$A$ to~$\RR$, is non-empty and we always endow~$X(A)$ with the weakest (Hausdorff) topology~$\tau_{X(A)}$ on $X(A)$ such that for each~$a\in A$ the function~$\hat{a}\colon X(A)\to\RR$, $\alpha\mapsto\alpha(a)$ is continuous and consider on $X(A)$ the Borel $\sigma-$algebra $\B(\tau_{X(A)})$ w.r.t. $\tau_{X(A)}$. Our main question is the following.

\begin{probl}\thlabel{GenKMP}\ \\
Let $(V, \tau_V)$ a topological vector space and $A$ be an algebra generated by $V$ such that $\left\{\alpha\in X(A): \alpha\restriction_V\ \text{is} \ \tau_V-\text{continuous}\right\}\neq\emptyset$. Given a linear functional $L$ on $A$ with~$L(1)=1$, does there exist a Radon measure~$\nu$ on $X(A)$ with support contained in $\left\{\alpha\in X(A): \alpha\restriction_V\ \text{is} \ \tau_V-\text{continuous}\right\}$ such that
\begin{equation}\label{MP::eq1}
L(a)=\int_{X(A)}\hat{a}(\alpha)\dd\nu(\alpha)\qquad\text{for all }a\in A?
\end{equation}
\end{probl}

If a Radon measure~$\nu$ as in~\eqref{MP::eq1} does exist, then we call~$\nu$ a \emph{representing Radon measure for~$L$}. We recall that a \emph{Radon measure}~$\nu$ on~$X(A)$ is a non-negative measure on the Borel~$\sigma$--algebra w.r.t.\ $\tau_{X(A)}$ that is locally finite and inner regular w.r.t.\ compact subsets of $X(A)$. The \emph{support of~$\nu$}, denoted by~$\supp(\nu)$, is the smallest closed subset~$C$ of~$X(A)$ for which $\nu(X(A)\setminus C)=0$.

The main difficulty is to understand how different choices of $\tau_V$ as well as different topological properties of $L$ impact the solvability of \ref{GenKMP} and the support of the corresponding representing measures. In this article we first focus on the case when $\tau_V$ is the topology generated by a Hilbertian seminorm (i.e. a seminorm induced by a symmetric positive semidefinite bilinear form) and then consider the case when $(V, \tau_V)$ is a nuclear space, as nuclear topologies are generated by a system of Hilbertian seminorms.

An early study of \ref{GenKMP} for $(V, \tau_V)$ nuclear can be found e.g. in \cite{KM60}, \cite[Chapter 5, Section~2]{BeKo88}, \cite{BeSi71}, \cite{BY75}, \cite{H75}, \cite[Section~12.5 and 15.1]{S90}, \cite{AH08}, where $A$ is the symmetric (tensor) algebra {$S(V)$} of $V$. This is a very natural choice as $S(V)$ is isomorphic to the ring of polynomials having as variables the {coordinate} vectors with respect to a basis of $V$ and the character space $X(S(V))$ of $S(V)$ can be identified with the algebraic dual $V^*$ {of $V$}. In fact, in those works the nuclearity assumption on $V$ on $L$ allow to get the existence of representing measures with support contained in $V'$, where $V'$ is the topological dual of $V$. More recently, the role of the nuclearity assumption on $V$  was discussed in \cite[Sections 5 and 6]{Smu17}, \cite{GhInKuMa18} and {\cite[Section 3]{IK-probl}} 
while in \cite{ikr14} and \cite{ik20} a better localization of the support was obtained for a specific choice of the nuclear space, namely $V=\C_c^\infty(\RR^n)$, i.e. the space of infinitely differentiable functions with compact support.

{Let us describe the two main results} in this article.

{First, when $\tau_V$ is the topology generated by a Hilbertian seminorm $q$,} {we derive in \thref{MainThm}} a criterion for the existence of a representing measure with support contained in the characters of $A$ whose restrictions to $V$ are $q-$continuous (see also \thref{rem-mainThm} and \thref{MainThm-supp}). {The criterion is based on the} projective limit approach to the moment problem introduced in \cite{InKuKuMi22}, that is, {we build the representing measure for $L$ on $A$ from representing measures for $L$ restricted to finitely generated subalgebras of $A$. In fact,} we prove that a representing measure for $L$ exists if and only if for any finitely generated subalgebra $S$ of $A$ the restriction $L\restriction_S$ is represented by a Radon measure $\nu_S$ such that the family of all $\nu_S$'s is concentrated w.r.t.\! another $q-$continuous Hilbertian seminorm {$p$, which has finite trace with respect to $q$}. The concentration of a family of measures is a classical concept in measure theory and is crucial for us, because it ensures the applicability of our projective limit approach in \cite{InKuKuMi22} {by implying a Prokhorov type condition}.

{Second, in \thref{MainThm-supp_2} we show that, when $A$ itself is endowed with a Hilbertian seminorm $q$ and there exists $C>0$ such that $L(a^2)\leq Cq(a)^2$ for all $a\in A$, it is enough to check the conditions in our criterion only on a dense subalgebra of $A$ to get the existence of a
representing measure for $L$ with support contained in the $q-$continuous characters of $A$ (see also \thref{MainThm-dense-supp}).}

{These two main results are based on two Hilbertian seminorms $q$ and $p$. We investigate different choices of them in terms of the functional $L$. For example, a natural choice for $p$ is Hilbertian seminorm induced by $L$, i.e. $s_L(a):=\sqrt{L(a^2)}\,\text{ for all}\ a\in A$.} {For this choice, the concentration of the $\nu_S$'s holds automatically and so we get more concrete sufficient conditions for the existence of a representing measure for $L$ in \thref{theorem-main-s_L} and \thref{cor-dense-mainthm}}. {We then exploit in \thref{thm-um-generating-subspace} the choice of $s_L$ to demonstrate how one can give sufficient conditions only in terms of $L$ to guarantee existence of a representing measures for $L\restriction_S$ for all finite subalgebras $S$.} Those corollaries all reveal the fundamental role played by the {Hilbertian seminorm $q$.} {Thus, in the last part of Section \ref{sec:hilb-sem}, we explore the case when no Hilbertian seminorm $q$ on $V$ is pre-given. In particular,}{ in \thref{cor-appl-lemma} we give conditions under which one can construct a suitable $q$ and derive a solution for \ref{GenKMP} in this case.}

Another setting in which it is always possible to obtain a suitable $q$ is when $(V, \tau_V)$ is a nuclear space. Therefore, in Section \ref{sec:nucl}, we prove analogous results for \ref{GenKMP} when $A$ is generated by a nuclear space $(V, \tau_V)$ (see \thref{thm-nucelar-criterium::cor}, \thref{thm-um-generating-subspace::cor} and \thref{thm-um-generating-subspace::cor2}). From those corollaries, some of the results in literature mentioned above can be retrieved. 

The structure of the paper is as follows. 

In Section~\ref{sect1}, we present our general context, thereby providing definitions and notations. In particular, we review the notions of Hilbertian seminorm and nuclear space in Subsection \ref{subs1.1}. In Subsection~\ref{subs1.2}, we state and prove \thref{lem::fund-lem-Umemura} (about the support localization of a Radon probability measure on a finite dimensional space with a Hilbertian seminorm), which we need for the proof of our main theorem \thref{MainThm}. Section~\ref{sec::main-results} contains our main results, as described above. Subsection~\ref{sec:conc} is dedicated to the concept of $p$-concentration of a family of Radon measures for a given seminorm $p$, which is exploited in the subsequent Subsections \ref{sec:hilb-sem} and \ref{sec:nucl} when studying \ref{GenKMP} for $V$ endowed with the topology $\tau_V$ induced by a Hilbertian seminorm (respectively, a nuclear topology). 
In Section~\ref{Sec:3} we apply our main results to the case when $A$ is the symmetric algebra $S(V)$ of a nuclear space $(V,\tau_V)$, see \thref{cor-SV-first} and \thref{simil-Berezansky}. In~\thref{BKS-thm}, we consider the case when some of the sufficient conditions for the existence of the representing measure for $L$ on $S(V)$ are only given on a total subset $E$ of the nuclear space $(V,\tau_V)$. Then the nuclearity allows us to obtain a Hilbertian norm $q$ on $V$ but, in order to apply our criterion \thref{MainThm-supp_2} to the dense sub-algebra $S(\mathrm{span}(E))$, we need a Hilbertian seminorm $\tilde{q}$ on $S(V)$, which we construct in \thref{lemma-aux}. We note that \thref{BKS-thm} is a  generalization of the classical solution to \ref{GenKMP} when $A=S(V)$ with $(V, \tau_V)$ nuclear due to Berezansky and Kondratiev. Finally, in Subsection \ref{sec: app-trace} of the Appendix \ref{sec:appendix}, we explain the relation between the notion of trace of a Hilbertian seminorm w.r.t. to another and the classical definition of trace of a positive continuous operator on a Hilbert space. We then compare in Subsection \ref{sec:app-nuclear} the definition of nuclear space used in this article (due to Yamasaki \cite{Ya85}) with that due to Grothendieck \cite{Gr55} and Mityagin \cite{Mi61}, as well as with the definitions of this concept given by Berezansky and Kondratiev in \cite[p.~14]{BeKo88} and by Schm\"udgen in \cite[p.~445]{Smu17} (this comparision is needed in Section \ref{Sec:3}). We also provide in Subsection~\ref{aux} a complete proof of the measure theoretical identity \eqref{eq-sigma-algebras}, which we exploited in the proof of \thref{MainThm-supp_2}.

\section{Preliminaries}\label{sect1}

In this section we collect some fundamental concepts, notations, and results which we will repeatedly use in the following.\par\medskip

\noindent
Throughout this article $A$ denotes a unital commutative $\RR$--algebra with non-empty character space $X(A)$.
\par\medskip

A subset $Q\subseteq A$ is a \emph{quadratic module (in $A$)} if $1\in Q, Q+Q\subseteq Q$, and $A^2Q\subseteq Q$.
The set $\sum A^2$ of all finite sums of squares of elements in $A$ is the smallest quadratic module in $A$.
The \emph{non-negativity} set of a quadratic module $Q$ is defined as
$$
K_Q:=\{\alpha\in X(A): \hat{a}(\alpha)\geq 0\text{ for all }a\in Q\}\subseteq X(A),
$$
which is closed.
Given $C\subseteq X(A)$ closed, the set
$$
\Pos(C):=\{a\in A: \hat{a}(\alpha)\geq 0\text{ for all }\alpha\in C\}
$$
is a quadratic module with $K_{\Pos(C)}=C$ (see, e.g.\ \cite[Proposition~2.1-(i)]{InKuKuMi22}).
\par\medskip

Throughout this article each linear functional $L\colon A\to\RR$ is assumed to be \emph{normalized}, that is, $L(1)=1$.

Given a a quadratic module $Q$ in $A$, we say that a linear functional $L\colon A\to\RR$ is \emph{$Q$--positive} if $L(Q)\subseteq [0,\infty)$.
In particular, each $\sum A^2$--positive linear functional $L\colon A\to\RR$ satisfies the Cauchy--Bunyakovsky--Schwarz inequality, i.e.,
\begin{equation}
L(ab)^2\leq L(a^2)L(b^2)\qquad\text{for all }a,b\in A.
\end{equation}
\par\medskip

Throughout this article we consider $A$ generated by an $\RR$--vector space $V$ endowed with a \emph{locally convex} topology, namely a topology induced by a family of seminorms. Therefore, let us recall that a function $p\colon V\to[0,\infty)$ is a \emph{seminorm} if $p(\lambda v)=\left|\lambda\right|p(v)$ and $p(v+w)\leq p(v)+p(w)$ for all $\lambda\in\RR$ and all $v,w\in V$. We denote by $B_r(p)$ the closed semi-ball of radius $r>0$ centered at the origin in $(V, p)$, i.e. $B_r(p):=\{v\in V: p(v)\leq r\}$.

A linear functional $l\colon V\to\RR$ is \emph{continuous} w.r.t.\ a seminorm $p$ on $V$ if there exists $C>0$ such that $\left| l(v)\right|\leq Cp(v)$ for all $v\in V$. We denote by $V^\prime_p$ the topological dual of $(V, p)$, i.e. the collection of all $p-$continuous linear functionals on $V$, while $V^\ast$ denotes the algebraic dual of $V$. The operator seminorm $p'$ on $V^\prime_p$ is defined as $p'(\ell):=\sup_{v\in B_1(p)}|\ell(v)|<\infty$. The weak topology on the algebraic dual (resp., topological dual) of $(V, p)$ is the weakest topology on $V^\ast$ (resp., on $V^\prime_p$ ) such that for each $v\in V$ the evaluation function $\mathrm{ev}_v\colon V^\ast\to\RR$ (resp., $V^\prime_p\to\RR$) is continuous.

We will often use the restriction map $\phi_V: X(A)\to V^\ast$ defined by $\phi_V(\alpha)=\alpha\restriction_V,$ $\forall \alpha\in X(A)$. Note that $\phi_V$ is continuous as $X(A)$ is endowed with $\tau_{X(A)}$ and $V^\ast$ with the weak topology.

We recall that the \emph{spectrum of a seminorm $p$} is defined as $$\mathfrak{sp}(p):=\{\alpha\in X(A): \alpha\text{ is }p\text{--continuous}\}.$$ More generally, for each $C>0$ we define
$$
\mathfrak{sp}_C(p):=\{\alpha\in X(A):\left|\alpha(a)\right|\leq Cp(a),\ \forall a\in A\},
$$
which is compact in $X(A)$, as it is closed and continuously embeds into the product $\prod_{a\in A}[-Cp(a),Cp(a)]$. Note that the spectrum $\mathfrak{sp}(p)=\bigcup_{n\in\NN}\mathfrak{sp}_n(p)$, which provides that $\mathfrak{sp}(p)$ is $\sigma-$compact in $X(A)$ and so Borel measurable.

\subsection{Hilbertian seminorms and nuclear spaces.}\label{subs1.1}\ 

Throughout this section $V$ will denote a real vector space.
\begin{dfn}\thlabel{dfn::Hilbertian-seminorm}
A seminorm $p$ on $V$ is called \emph{Hilbertian} if it is induced by a symmetric positive semidefinite bilinear form $\langle\cdot,\cdot\rangle$ on $V$, i.e., $p(v)=\sqrt{\langle v,v\rangle}$ for all $v\in V$.
\end{dfn}

Note that a seminorm $p$ on $V$ is Hilbertian if and only if $p$ fulfills the parallelogram law, i.e. $p(v+w)^2+p(v-w)^2=2p(v)^2+2p(w)^2$ for all $v,w\in V$, in which case the bilinear form ${\langle \cdot,\cdot\rangle_p}$ is uniquely determined by $p$ via the polarization identity: \begin{equation}\label{eq::corres-Hseminorm-bilinear}
\langle v,w\rangle=\tfrac{1}{2}\left(p(v+w)^2-p(v)^2-p(w)^2\right)\quad\text{for all }v,w\in V.
\end{equation}
For this reason, in the following we denote the positive semidefinite bilinear form inducing $p$ by $\langle \cdot,\cdot\rangle_p$.

The term ``Hilbertian seminorm'', used e.g. in~\cite{Um65} and~\cite{Ya85}, is also sometimes replaced by the term ``prehilbertian seminorm'' according to the Bourbaki's tradition~\cite[V.4, Definition 3]{BouTVS}. Both terms hints to the fact that this type of seminorms can be aways used to construct a Hilbert space (see \thref{rem::hilbertian}). \par\medskip

Let us also observe that there always exists an Hilbertian seminorm on every non-trivial vector space $V$. Indeed, if $(e_i)_{i \in I}$ is an algebraic basis of $V$ then for any $x= \sum_{i \in I} x_i e_i\in V$ and $y= \sum_{i \in I} y_i e_i\in V$ we can define
$\langle x, y \rangle := \sum_{i \in I} x_i y_i$. As only finite many summands are unequal to zero, the sum is finite and $p(x):=\sqrt{\langle x, x \rangle}$ defines a Hilbertian seminorm on $V$.

Let us now introduce the notion of trace of a Hilbertian seminorm w.r.t. to another one (see \cite[V.58, No. 9]{BouTVS}) which will be fundamental in the definition of a nuclear space used in this article. To this purpose, let us recall that
given a Hilbertian seminorm $p$ on $V$, a subset $E$ of $V$ is called:
\begin{itemize}
\item \emph{$p$--orthogonal} if $\langle e_1,e_2\rangle_p = 0$ for all distinct elements $e_1,e_2\in E$.
\item \emph{$p$--orthonormal} if $E$ is $p$--orthogonal and $p(e)=1$ for all $e\in E$.
\end{itemize}
In particular, a $p$--orthonormal set $E$ is said to be a \emph{complete $p-$orthonormal system} if $E$ is total in $E$, i.e. $\overline{span(E)}^p=V$. Such a system is also known as \emph{orthonormal basis}.

\begin{dfn}\thlabel{def::trace}
Let $p$ and $q$ be two Hilbertian seminorms on $V$. The \emph{trace of $p$ w.r.t.\ $q$} is denoted by $\mathrm{tr}(p/q)$ and defined as
$$
\mathrm{tr}(p/q):=
\begin{cases}
\sup\limits_{E\in\mathrm{FON}(q)}\,\,\sum\limits_{e\in E}p(e)^2, & \text{if }\ker(q)\subseteq\ker(p)\\
\infty, & \text{otherwise}
\end{cases},
$$
where $\mathrm{FON}(q)$ denotes the collection of all finite $q$--orthonormal subsets of $V$.
\end{dfn}

{When there exists $C>0$ such that $p\leq C q$} the following characterization of the trace of $p$ w.r.t.\! $q$ holds (by combining \thref{prop::trace-HS} and \eqref{eq::trace2} in Appendix \ref{sec: app-trace}):
\begin{equation}\label{char-trace}
\forall \text{$E$ complete $q-$orthonormal system in $V$},\,\,\mathrm{tr}(p/q)=
\sum\limits_{e\in E}p(e)^2.
\end{equation}

The following properties are immediate from the \thref{def::trace}.

\begin{lem}\thlabel{prop::trace}
Let $p$ and $q$ be two Hilbertian seminorms on $V$ with $\mathrm{tr}(p/q)<\infty$. Then:
\begin{enumerate}[label = (\roman*)]
	\item\label{properties-trace::1}
$p^2\leq\mathrm{tr}(p/q)q^2$.
	\item\label{properties-trace::2}
$\forall \varepsilon, \delta >0,\ \mathrm{tr}(\varepsilon p/\delta q)=\left(\frac{\varepsilon}{\delta}\right)^2\mathrm{tr}(p/q)$.
\item $\forall\ W\ \text{subspace of }\ V, \ \tr(p\!\restriction_W/ q\!\restriction_W)\leq\tr(p/q)$.
\end{enumerate}
\end{lem}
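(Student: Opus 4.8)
The plan is to verify all three items directly from \thref{def::trace}, using that the hypothesis $\mathrm{tr}(p/q)<\infty$ forces $\ker(q)\subseteq\ker(p)$, so that in each instance below the trace is the supremum over finite orthonormal sets rather than $\infty$.

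For (i), I would fix $v\in V$ and split into two cases. If $q(v)=0$, then $v\in\ker(q)\subseteq\ker(p)$, hence $p(v)=0$ and $p(v)^2\leq\mathrm{tr}(p/q)\,q(v)^2$ holds trivially. If $q(v)>0$, then $\{v/q(v)\}$ is a finite $q$-orthonormal set, so by \thref{def::trace} we get $\mathrm{tr}(p/q)\geq p(v/q(v))^2=p(v)^2/q(v)^2$, which rearranges to the desired inequality.

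For (ii), the key observation is that rescaling a Hilbertian seminorm by a positive constant rescales its inducing bilinear form by the square of that constant; hence orthogonality is preserved and $E$ is $\delta q$-orthonormal if and only if $E':=\{\delta e:e\in E\}$ is $q$-orthonormal. This yields a bijection $\mathrm{FON}(\delta q)\leftrightarrow\mathrm{FON}(q)$ along which $\sum_{e\in E}(\varepsilon p)(e)^2=\varepsilon^2\sum_{e\in E}p(e)^2=(\varepsilon/\delta)^2\sum_{e'\in E'}p(e')^2$, and taking suprema gives the claim. The kernel condition is unchanged because $\ker(\delta q)=\ker(q)$ and $\ker(\varepsilon p)=\ker(p)$ for $\varepsilon,\delta>0$.

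For (iii), I would first note that $\ker(q\restriction_W)=\ker(q)\cap W\subseteq\ker(p)\cap W=\ker(p\restriction_W)$, so the restricted trace is again a supremum. Since the bilinear form inducing $q\restriction_W$ is the restriction to $W$ of the one inducing $q$, every finite $q\restriction_W$-orthonormal subset of $W$ is, viewed inside $V$, also a finite $q$-orthonormal subset of $V$; thus $\mathrm{FON}(q\restriction_W)\subseteq\mathrm{FON}(q)$, and as $(p\restriction_W)(e)=p(e)$ for $e\in W$, the supremum defining $\mathrm{tr}(p\restriction_W/q\restriction_W)$ runs over a subfamily of the one defining $\mathrm{tr}(p/q)$, which gives the inequality. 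I do not expect a genuine obstacle here: the statement is essentially bookkeeping around \thref{def::trace}, and the only care required is in the degenerate (kernel) cases of (i) and in correctly tracking the quadratic scaling of the bilinear forms in (ii).
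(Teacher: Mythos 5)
Your proposal is correct: the paper offers no written proof of \thref{prop::trace}, stating only that the properties are ``immediate from \thref{def::trace}'', and your argument is precisely the intended direct verification from that definition. The case split on $\ker(q)$ in (i), the quadratic rescaling of the inducing bilinear form and the bijection $\mathrm{FON}(\delta q)\leftrightarrow\mathrm{FON}(q)$ in (ii), and the inclusion $\mathrm{FON}(q\!\restriction_W)\subseteq\mathrm{FON}(q)$ in (iii) are all handled correctly.
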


%
%

We are equipped now with all notions needed to introduce the definition of a nuclear space due to Yamasaki (see \cite[Definition~20.1]{Ya85}), which we are going to adopt in this article.
\begin{dfn}
\thlabel{def::nuclear-space}
A locally convex space $(V,\tau)$ is called \emph{nuclear} if $\tau$ is induced by a directed family $\P$ of Hilbertian seminorms on $V$ such that for each $p\in\P$ there exists $q\in\P$ with $\mathrm{tr}(p/q)<\infty$.
\end{dfn}

\thref{def::nuclear-space} is equivalent to the more traditional ones in \cite{Gr55} and \cite{Mi61}, which we report in Appendix \ref{sec:app-nuclear} for the convenience of the reader (see \thref{def::nuclear-space-Grothendieck} and \thref{def::nuclear-space-Mityagin}).

Note that a nuclear topology can be always constructed on every vector space $V$. However, this nuclear topology has typically no relation with a pre-given topology $\tau_V$ on $V$. However, when $(V, \tau_V)$ is a separable locally convex space with a Schauder basis, there exists a dense subspace $U$ of $V$ on which a nuclear topology stronger than $\tau_V\restriction_U$ can be constructed.

\subsection{Probabilities on finite dimensional Hilbertian seminormed spaces.}\label{subs1.2}\ 

In the following we introduce a fundamental result about the support localization of a Radon measure defined on the dual of a finite dimensional real vector space, namely \thref{lem::fund-lem-Umemura}, which is inspired by \cite[Fundamental lemma (p.~24)]{Um65} and will play a crucial role in the proof of our main theorem \thref{MainThm}. For this, let us recall two properties of  the Gaussian measure on a finite dimensional real vector space endowed with a Hilbertian seminorm (see \cite[p.~26-28]{Um65} for a proof).

\begin{prop}\thlabel{prop-Gaussian-measure}
Let $q$ be a Hilbertian seminorm on an $n-$dimensional $\RR$--vector space $V$ with $\ker(q)=\{0\}$ and $E$ a complete $q-$orthonormal system of $V$. Let $\gamma$ be the Gaussian measure on $V$, i.e.,
$$
\dd\gamma(v) := (2\pi)^{-\tfrac{n}{2}}\exp\left(-\tfrac{1}{2}q(v)^2\right)\dd\lambda(v),
$$
where $\lambda$ is the measure on $V$ corresponding to the Lebesgue measure on $\RR^n$ under the identification $V\to\RR^n, v\mapsto (\langle v,e\rangle_q)_{e\in E}$. Then the following properties hold
\begin{enumerate}[label = (\roman*)]
	\item\label{prop-Gaussian-measure::3}
	$\int\langle v,w \rangle_q^2\dd\gamma(v)=1$ for all $w\in V$ such that $q(w)=1$.
		\item\label{prop-Gaussian-measure::4}
		$\gamma(\{v\in V: |\ell(v)|\geq 1\})\geq 7^{-1}$ for all $\ell\in V^\prime$ with $q'(\ell)\geq 1$, where $q'$ denotes the operator seminorm on $V^\prime$. \end{enumerate}
\end{prop}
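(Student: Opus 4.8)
The plan is to transport everything to $\RR^n$ via the coordinate identification already fixed in the statement and then to argue directly with the standard $n$-dimensional Gaussian. Writing $v\leftrightarrow x=(\langle v,e\rangle_q)_{e\in E}\in\RR^n$, the $q$--orthonormality of the complete system $E$ gives $q(v)^2=\sum_{e\in E}\langle v,e\rangle_q^2=|x|^2$, so under this identification $\gamma$ becomes exactly the standard Gaussian measure $(2\pi)^{-n/2}e^{-|x|^2/2}\,\dd x$ on $\RR^n$. In particular its coordinates $x_e$ are independent standard normals, so that $\int x_e x_{e'}\,\dd\gamma=\delta_{e,e'}$, which is the only moment input needed.

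For part \ref{prop-Gaussian-measure::3}, I would fix $w$ with $q(w)=1$ and expand $w=\sum_{e\in E}w_e\,e$, so that $\sum_{e\in E}w_e^2=q(w)^2=1$ and $\langle v,w\rangle_q=\sum_{e\in E}w_e x_e$. Then
\[
\int\langle v,w\rangle_q^2\,\dd\gamma=\sum_{e,e'\in E}w_e w_{e'}\int x_e x_{e'}\,\dd\gamma=\sum_{e\in E}w_e^2=1,
\]
which is exactly \ref{prop-Gaussian-measure::3}; equivalently one may invoke rotational invariance to note that $\langle v,w\rangle_q$ is itself a standard normal variable.

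For part \ref{prop-Gaussian-measure::4}, the key observation is that since $\ker(q)=\{0\}$ the pair $(V,\langle\cdot,\cdot\rangle_q)$ is a genuine $n$-dimensional Euclidean space, so the Riesz representation theorem yields a unique $u\in V$ with $\ell(v)=\langle v,u\rangle_q$ for all $v$ and with $q'(\ell)=q(u)$; hence the hypothesis $q'(\ell)\geq 1$ reads $q(u)\geq 1$. By the computation underlying \ref{prop-Gaussian-measure::3} (applied to $u/q(u)$), the variable $v\mapsto\langle v,u\rangle_q$ is centered Gaussian under $\gamma$ with variance $q(u)^2$, so $Z:=\langle v,u\rangle_q/q(u)$ is a standard normal and
\[
\gamma(\{v:|\ell(v)|\geq 1\})=\gamma(\{|Z|\geq 1/q(u)\})\geq\gamma(\{|Z|\geq 1\}),
\]
where the last step uses $q(u)\geq 1$. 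It then remains to bound the fixed number $\gamma(\{|Z|\geq 1\})=\tfrac{2}{\sqrt{2\pi}}\int_1^\infty e^{-t^2/2}\,\dd t$ from below by $7^{-1}$.

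This numerical estimate is the only genuinely delicate point, since the specific constant $7^{-1}$ has to be extracted from the one-dimensional Gaussian tail. I would obtain it by a crude but explicit lower Riemann sum: on $[1,2]$ the integrand $e^{-t^2/2}$ is decreasing, so evaluating it at the right endpoints of the four subintervals of length $\tfrac14$ gives $\int_1^2 e^{-t^2/2}\,\dd t\geq\tfrac14\big(e^{-0.78125}+e^{-1.125}+e^{-1.53125}+e^{-2}\big)$, which already exceeds $\tfrac{\sqrt{2\pi}}{14}$, whence $\tfrac{2}{\sqrt{2\pi}}\int_1^\infty e^{-t^2/2}\,\dd t\geq 7^{-1}$ with room to spare. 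Everything else is linear algebra together with the elementary second-moment computation for the standard Gaussian; no infinite-dimensional input is required, because in this finite-dimensional setting the identification with $\RR^n$ is exact.
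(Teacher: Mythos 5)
Your proposal is correct, but note that the paper does not actually prove this proposition at all: it states it as a recalled fact and cites Umemura \cite[p.~26--28]{Um65} for the proof. What you have supplied is therefore a self-contained replacement for that citation, and it checks out. Part \ref{prop-Gaussian-measure::3} is exactly the standard second-moment computation after the coordinate identification (the key point being that orthonormal coordinates turn $\gamma$ into the standard Gaussian on $\RR^n$, so $\int x_e x_{e'}\,\dd\gamma=\delta_{e,e'}$). Part \ref{prop-Gaussian-measure::4} correctly reduces, via Riesz representation in the finite-dimensional Hilbert space $(V,\langle\cdot,\cdot\rangle_q)$ (legitimate since $\ker(q)=\{0\}$ makes $q$ a norm and hence $V'=V^\ast$), to the one-dimensional tail bound $\tfrac{2}{\sqrt{2\pi}}\int_1^\infty e^{-t^2/2}\,\dd t\geq 7^{-1}$; your appeal to the fact that a linear combination of independent standard normals with unit coefficient norm is again standard normal is the right (and standard) ingredient, and the monotonicity step $\gamma(\{|Z|\geq 1/q(u)\})\geq\gamma(\{|Z|\geq 1\})$ uses $q(u)\geq 1$ correctly. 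Your numerical verification also holds: the right-endpoint Riemann sum is a valid lower bound for the decreasing integrand on $[1,2]$, and it gives approximately $0.2835$, which exceeds $\sqrt{2\pi}/14\approx 0.179$, so the bound $7^{-1}\approx 0.143$ follows with room to spare (the true value of the tail probability is about $0.317$). The only stylistic remark is that the constant $7^{-1}$ is far from sharp, which is presumably why Umemura (and the present paper, in \thref{lem::fund-lem-Umemura}) are content with it; any crude tail estimate suffices, as your argument demonstrates.
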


\begin{lem}
\thlabel{lem::fund-lem-Umemura}
Let $p$ and $q$ Hilbertian seminorms on a finite dimensional $\RR$--vector space such that $\tr(p/q)<\infty$ and let $\mu$ be a probability measure on $V^\prime$.

\noindent If for any $\varepsilon>0$ there exists $\delta>0$ such that $\mu(\{l\in V^\prime:\left|l(v)\right|\geq 1\})\leq \varepsilon$ for all $v\in B_\delta(p)$, then
$$
\mu(B_1(q^\prime))\geq 1-7(\varepsilon+\tr(p/\delta q)),$$
where $q'$ denotes the operator seminorm on $V^\prime$.
\end{lem}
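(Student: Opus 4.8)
The plan is to compare $\mu$ with the Gaussian measure $\gamma$ attached to $q$ in \thref{prop-Gaussian-measure} and to use Fubini's theorem to convert the ``thin'' smallness hypothesis on the half-spaces $\{|\ell(v)|\ge 1\}$ into control of the functionals with $q'(\ell)\ge 1$.

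First I would reduce to the case $\ker(q)=\{0\}$. Since $\tr(p/q)<\infty$, \thref{def::trace} forces $N:=\ker(q)\subseteq\ker(p)$. For $v\in N$ and any $t>0$ we have $p(tv)=0\le\delta$, so the hypothesis gives $\mu(\{\ell:|\ell(v)|\ge 1/t\})\le\varepsilon$; letting $t\to\infty$ and $\varepsilon\to0$ yields $\mu(\{\ell:\ell(v)\ne0\})=0$ for each $v\in N$, and running over a (finite) basis of $N$ shows that $\mu$ is concentrated on the annihilator $N^{\perp}\cong(V/N)'$. On $W:=V/N$ the seminorm $q$ induces a genuine norm $\bar q$ and $p$ a seminorm $\bar p$ with $\tr(\bar p/\bar q)=\tr(p/q)$, while $q'$, $B_1(q')$ and the hypothesis transfer verbatim to the pushforward $\bar\mu$ under $N^\perp\to W'$ (here I use $\bar p(\bar v)=p(v)$ for any lift, valid because $N\subseteq\ker(p)$). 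Thus it suffices to treat the case where $q$ is a norm.

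For the core estimate, fix $\varepsilon$ and the associated $\delta$, let $E=\{e_1,\dots,e_n\}$ be a complete $q$-orthonormal system and let $\gamma$ be as in \thref{prop-Gaussian-measure}. Fubini's theorem gives
\[
\int_{V'}\gamma(\{v:|\ell(v)|\ge 1\})\,\dd\mu(\ell)=\int_{V}\mu(\{\ell:|\ell(v)|\ge 1\})\,\dd\gamma(v).
\]
By property~(ii) of \thref{prop-Gaussian-measure} the left-hand integrand is $\ge 1/7$ on $\{q'(\ell)\ge 1\}$, so the left-hand side is at least $\tfrac17\mu(\{\ell:q'(\ell)\ge 1\})$. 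For the right-hand side I split $V$ at $B_\delta(p)$: on $B_\delta(p)$ the hypothesis bounds the inner measure by $\varepsilon$, and off $B_\delta(p)$ I bound it crudely by $1$. To estimate $\gamma(V\setminus B_\delta(p))$ I expand $p(v)^2=\sum_{i,j}\langle e_i,e_j\rangle_p\,\langle v,e_i\rangle_q\langle v,e_j\rangle_q$ and integrate: property~(i) of \thref{prop-Gaussian-measure} together with polarization gives $\int_V\langle v,e_i\rangle_q\langle v,e_j\rangle_q\,\dd\gamma(v)=\delta_{ij}$, whence $\int_V p(v)^2\,\dd\gamma(v)=\sum_i p(e_i)^2=\tr(p/q)$ by \eqref{char-trace} (applicable since $p\le\sqrt{\tr(p/q)}\,q$ by \thref{prop::trace}\,(i)). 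Markov's inequality then yields $\gamma(V\setminus B_\delta(p))\le \tr(p/q)/\delta^2=\tr(p/\delta q)$, the last equality being \thref{prop::trace}\,(ii). Hence the right-hand side is at most $\varepsilon+\tr(p/\delta q)$, and combining the two bounds gives $\mu(\{\ell:q'(\ell)\ge 1\})\le 7(\varepsilon+\tr(p/\delta q))$. Since $V'\setminus B_1(q')=\{\ell:q'(\ell)>1\}\subseteq\{\ell:q'(\ell)\ge 1\}$, this is exactly the claimed lower bound for $\mu(B_1(q'))$.

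The genuinely substantial step is the Fubini comparison with $\gamma$: it trades the uniform smallness of $\mu$ on the half-spaces for an integral that the Gaussian evaluates as the trace, and the identity $\int_V p^2\,\dd\gamma=\tr(p/q)$ is precisely what turns the analytic hypothesis into the arithmetic quantity $\tr(p/\delta q)$ appearing in the conclusion. The degenerate case $\ker(q)\ne\{0\}$ is the main technical nuisance, but the almost-everywhere vanishing argument above disposes of it cleanly.
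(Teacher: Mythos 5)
Your proof is correct and follows essentially the same route as the paper's: reduction to $\ker(q)=\{0\}$, the Fubini comparison of $\mu$ with the Gaussian measure $\gamma$, the lower bound $7^{-1}$ on $\{\ell : q'(\ell)\geq 1\}$, and Markov's inequality via $\int_V p(v)^2\,\dd\gamma(v)=\tr(p/q)$ followed by \thref{prop::trace}-(ii). The only minor variations are cosmetic: the paper handles the kernel via a complement of $\ker(q)$ rather than the quotient $V/\ker(q)$, and it computes $\int_V p^2\,\dd\gamma$ using a $q$-orthonormal basis that is simultaneously $p$-orthogonal (Bourbaki), whereas you use an arbitrary $q$-orthonormal basis together with polarization and the Gaussian covariance identity --- both are valid.
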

\begin{proof} Let $\mu$ be a probability measure on $V^\prime$.
W.l.o.g. we can assume $\ker(q)=\{0\}$, because otherwise for each complement $W$ of $\ker(q)$ in $V$ we have $\mu$--almost surely that $
B_1(q^\prime)=\{l\in V^\prime:\left|l(w)\right|\leq q(w)\ \forall\ w\in W\}
$ holds\footnote{On the one hand, it is immediate that
$B_1(q^\prime)=\{l\in V^\prime: q'(l)\leq 1\}=\{l\in V^\prime:\left|l(v)\right|\leq q(v)\text{ for all }v\in V\}\subseteq \{l\in V^\prime:\left|l(w)\right|\leq q(w)\text{ for all }w\in W\}$. On the other hand, since $p^2\leq\tr(p/q)q^2$, we have that $\ker(q)\subseteq\ker(p)\subset B_\delta(p)$ for all $\delta>0$ and so the assumption on $\mu$ in \thref{lem::fund-lem-Umemura} ensures that $\mu(\{l\in V^\prime:\left|l(v)\right|\geq 1\})=0$ for all $v\in\ker(q)$, i.e. $\mu(\{l\in V^\prime:\left|l(v)\right|=0, \ \forall {v}\in \ker(q)\})=1$, which immediately provides that $\mu\left\{B_1(q')\setminus \{l\in V^\prime:\left|l(w)\right|\leq q(w)\text{ for all }w\in W\}\right\}=0$.}. Consider
$$
D:=(\mu\times\gamma)(\{(l,v)\in V^\prime\times V:\left|l(v)\right|\geq 1\}),
$$
where $\mu\times\gamma$ denotes the product measure between the given measure $\mu$ on $V'$ and the Gaussian measure~$\gamma$ on $V$.
Now, let $l\in V^\prime\setminus B_1(q^\prime)$. Then Fubini's theorem on the one hand, combined with \thref{prop-Gaussian-measure}-\eqref{prop-Gaussian-measure::4}, provides that
$$
D =\int_{V^\prime}\gamma(\{v\in V:\left|l(v)\right|\geq 1\})\dd\mu(l)\geq 7^{-1}\mu(V^\prime\setminus B_1(q^\prime))=7^{-1}\left(1-\mu(B_1(q^\prime))\right),
$$
and, on the other hand, combined with the assumption yields that
\begin{equation}\label{est2}
D=\int_V\mu(\{l\in V^\prime: \left|l(v)\right|\geq 1\})\dd\gamma(v)\leq \varepsilon\gamma(B_\delta(p))+\gamma(V\setminus B_\delta(p)).
\end{equation}
Moreover, by~\cite[V, \S4.8, Theorem 2]{BouTVS}, there exists a complete $q-$orthonormal system $E$ of $V$ that is $p-$orthogonal. In particular,
$
p(v)^2=\sum\limits_{e\in E}\langle v,e\rangle_q^2p(e)^2$ holds for all $v\in V,
$
which combined with~\thref{prop-Gaussian-measure}-\eqref{prop-Gaussian-measure::3} and \eqref{char-trace} gives $$
\gamma(V\setminus B_\delta(p))\leq\delta^{-2}\int_Vp(v)^2\dd\gamma(v)=\delta^{-2}\sum_{e\in E}p(e)^2\int_V\langle v,e\rangle_q^2\dd\gamma(v)=\delta^{-2}\tr(p/q).
$$
The latter together with \eqref{est2} and~\thref{prop::trace}-\eqref{properties-trace::2} provides
\begin{equation}\label{est1}
D\leq \varepsilon+\delta^{-2}\tr(p/q)= \varepsilon+\tr(p/\delta q).
\end{equation}
Combining \eqref{est2} and \eqref{est1} yields the assertion.
\end{proof}

\section{Main results}\label{sec::main-results}

In this section we are going to present our main results concerning \ref{GenKMP} for a unital commutative real algebra $A$ generated by a vector space $V$ first endowed with a Hilbertian seminorm $q$ and then with a nuclear topology. More precisely, in Subsection \ref{sec:hilb-sem} we first establish a criterion for the existence of a representing measure with support contained in the set of characters of $A$ whose restrictions to $V$ are $q-$continuous (see \thref{MainThm} and \thref{rem-mainThm}, as well as \thref{MainThm-supp}). When the seminorm $q$ is defined on the full algebra $A$, i.e. $A=V$, this result provides in particular a criterion for the existence of a representing measure on the Gelfand spectrum of $q$. We actually show that when $L\leq C q$ on $A$ for some C then it is enough to check the latter criterion just on a dense subalgebra of $A$ (see \thref{{MainThm-supp_2}}). Moreover, in \thref{lemma::a} we provide an explicit bound on $L$ which guarantees the existence of a Hilbertian seminorm $q$ on $A$ satisfying our criteria.

Exploiting our general criteria, in \thref{thm-um-generating-subspace}, we identify more concrete sufficient conditions on $L$ and $q$ for the existence of such a representing measure for $L$. Those allow us to clarify in Subsection \ref{sec:nucl} the relation between the solvability of \ref{GenKMP} and the presence of a nuclear topology on $V$. Our general criteria are based on the projective limit approach introduced in \cite{InKuKuMi22} which allows to reduce \ref{GenKMP} to a family of finite-dimensional moment problems whose solutions satisfy a concentration condition to which we dedicate Subsection \ref{sec:conc}.
\\

\subsection{The concentration condition}\label{sec:conc}\ \\
Let $A$ be a unital commutative $\RR$-algebra generated by a linear subspace $V\subseteq A$ such that $X(A)\neq\emptyset$, and $L$ a normalized linear functional on $A$.

As already mentioned, in proving our main results for \ref{GenKMP} we will exploit the projective limite approach we developed in \cite{InKuKuMi22}. This is based on the construction of $(X(A), \tau_{X(A)})$ together with the maps $\{\pi_S: S\in J\}$ as the projective limit of the projective system of Hausdorff spaces $\{(X(S),\tau_{X(S)}),\pi_{S,T}, J\}$, where $$J:=\{\langle W\rangle: W \text{ finite\ dimensional\ subspace of }V\}$$ is ordered by inclusion, $\langle W\rangle$ denotes by the subalgebra of $A$ generated by~$W$, $\tau_{X(S)}$ is the weak topology on $X(S)$, for any $S,T$ subalgebras of $A$ with $S\subseteq T$ the map $\pi_{S,T}: X(T)\to X(S)$ is the natural restriction and $\pi_S:=\pi_{S, A}$. The corresponding projective system of measurable spaces is given by $\{(X(S), \B(\tau_{X(S)})), \pi_{S, T}, J\}$,  where $\B(\tau_{X(S)})$ is the Borel $\sigma-$algebra w.r.t. $\tau_{X(S)}$. Recall that this means that $\pi_{S, T}$ is measurable for all $S\subseteq T$ in $J$ and that $\pi_{S, T}\circ\pi_{T, R}$ for all $S\subseteq T\subseteq R$ in $J$.

Roughly speaking, in \cite{InKuKuMi22}, we establish that there exists a representing Radon measure for $L$ on $A$ supported in $(X(A), \B(\tau_{X(A)}))$ if and only if for each $S\in J$ there exists a representing Radon measure $\nu_S$ supported in $(X(S), \B(\tau_{X(S)}))$ such that $\{\nu_S: \ S\in J\}$ fulfills the so-called Prokhorov condition.
 In the next subsection we will exploit this result when studying \ref{GenKMP} for $V$ endowed with the topology $\tau_V$ induced by a Hilbertian seminorm and we will exploit the given topological structure on $V$ to prove that the Prokhorov condition (see \cite[Section 1.2]{InKuKuMi22} and references therein) is satisfied whenever $\{\nu_S: \ S\in J\}$ fulfills the following concentration property.

\begin{dfn}\label{cont-projsys}
Given a seminorm $p$ on $V$ and for each $S\in J$ a Radon measure $\nu_S$ on $(X(S), \B(\tau_{X(S)}))$, we say that $\{\nu_S: S\in J\}$ is \emph{$p-$concentrated} (or \emph{concentrated w.r.t. $p$}) if
\begin{equation}\label{eq::continuous}
\forall\varepsilon>0\,\exists\delta>0\colon \forall S\in J, \forall a\in B_{\delta}(p)\cap S, \nu_S(\{\alpha\in X(S):\left|\alpha(a)\right|\geq 1\})\leq \varepsilon.
\end{equation}
\end{dfn}

This definition is an adaptation to our setting of the notion of \emph{continuity for cylindrical measures} introduced in \cite[16, Chapter IV, Section 1.4]{GV64}. It also easily relates to the notion of \emph{concentrations of cylindrical measures} in \cite[Definition 1, p.192]{S73}. In fact, \eqref{eq::continuous} is weaker than assuming that the cylindrical quasi-measure associated to $\{\nu_S:S\in J\}$ is cylindrically concentrated on $\{\mathfrak{sp}_C(p): C>0\}$, namely $\forall\varepsilon>0\,\exists \delta>0 \colon \forall S\in J,$  $\nu_S(\pi_S(\mathfrak{sp}_{\delta}(p)))\geq 1-\varepsilon$.\\ 

Let us now provide a useful characterization of the $p-$concentration of a collection of Radon measures.

\begin{prop}\thlabel{char-cont-projsys}
Given a seminorm $p$ on $V$ and for each $S\in J$ a Radon measure $\nu_S$ on $(X(S), \B(\tau_{X(S)}))$, we have that $\{\nu_S: S\in J\}$ is $p-$concentrated if and only if the following holds
\begin{equation}\label{eq::continuous2}
\forall\varepsilon>0\,\exists\gamma>0\colon \forall S\in J, \forall a\in S\cap V, \nu_S(\{\alpha\in X(S):\left|\alpha(a)\right|\leq \gamma p(a)\})\geq 1-\varepsilon.
\end{equation}

\end{prop}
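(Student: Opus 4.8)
The plan is to prove the two implications separately, treating the characterization as an equivalence between the scale-free formulation \eqref{eq::continuous2} and the ``unit-level'' formulation \eqref{eq::continuous}. The key observation is that both conditions are fundamentally statements about the distribution of $\alpha(a)$ under $\nu_S$ for $a\in V\cap S$, and the passage between them is just a rescaling of the test element $a$ together with the homogeneity of the seminorm $p$ and the linearity of the character evaluation $\alpha\mapsto\alpha(a)$.

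\smallskip

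\noindent\textbf{From \eqref{eq::continuous2} to \eqref{eq::continuous}.} First I would fix $\varepsilon>0$ and take the $\gamma>0$ furnished by \eqref{eq::continuous2}. The natural candidate is to set $\delta:=1/\gamma$. Given $S\in J$ and $a\in B_\delta(p)\cap S$, I want to bound $\nu_S(\{\alpha:|\alpha(a)|\geq 1\})$. Since $a\in V\cap S$ (as $a\in B_\delta(p)$ forces $a\in V$), condition \eqref{eq::continuous2} gives $\nu_S(\{\alpha:|\alpha(a)|\leq \gamma p(a)\})\geq 1-\varepsilon$. Because $p(a)\leq\delta=1/\gamma$, we have $\gamma p(a)\leq 1$, so $\{\alpha:|\alpha(a)|\leq\gamma p(a)\}\subseteq\{\alpha:|\alpha(a)|\leq 1\}$, and taking complements yields $\nu_S(\{\alpha:|\alpha(a)|>1\})\leq\varepsilon$. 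A minor care point is the distinction between $|\alpha(a)|\geq 1$ and $|\alpha(a)|>1$; this is harmless here since $\{|\alpha(a)|\geq 1\}\subseteq\{|\alpha(a)|>\gamma p(a)\}$ once $\gamma p(a)<1$, and for the boundary case one simply shrinks $\delta$ slightly (e.g.\ take $\delta$ strictly less than $1/\gamma$).

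\smallskip

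\noindent\textbf{From \eqref{eq::continuous} to \eqref{eq::continuous2}.} Here the rescaling argument must handle the possibility that $p(a)=0$. Fix $\varepsilon>0$ and let $\delta>0$ be as in \eqref{eq::continuous}; set $\gamma:=2/\delta$ (the factor $2$ buys the strict inequality we need). For $a\in V\cap S$ with $p(a)>0$, consider the rescaled element $\tilde a:=\tfrac{\delta}{2p(a)}\,a\in V\cap S$, which satisfies $p(\tilde a)=\delta/2<\delta$, hence $\tilde a\in B_\delta(p)\cap S$. Applying \eqref{eq::continuous} and using $\alpha(\tilde a)=\tfrac{\delta}{2p(a)}\alpha(a)$ by linearity, I get $\nu_S(\{|\alpha(a)|\geq \tfrac{2p(a)}{\delta}\})\leq\varepsilon$, i.e.\ $\nu_S(\{|\alpha(a)|\geq\gamma p(a)\})\leq\varepsilon$, which gives $\nu_S(\{|\alpha(a)|\leq\gamma p(a)\})\geq 1-\varepsilon$ after complementing (the strict-versus-nonstrict boundary again being absorbed by the chosen factor). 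The degenerate case $p(a)=0$ is where I expect the only real subtlety: then \eqref{eq::continuous} applies to $\tilde a:=t\,a$ for every $t>0$ (since $p(t a)=0<\delta$), so $\nu_S(\{|\alpha(a)|\geq 1/t\})\leq\varepsilon$ for all $t>0$; letting $t\to\infty$ through a countable sequence and using continuity of measure from below forces $\nu_S(\{\alpha(a)\neq 0\})\leq\varepsilon$, so $\nu_S(\{|\alpha(a)|\leq\gamma p(a)\})=\nu_S(\{\alpha(a)=0\})\geq 1-\varepsilon$, as required. I would fold this into the main argument by noting that the set $\{|\alpha(a)|\leq\gamma p(a)\}$ contains $\{\alpha(a)=0\}$ whenever $p(a)=0$.

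\smallskip

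\noindent\textbf{Main obstacle.} The computational content is elementary, so the only genuine care lies in (i) the treatment of elements with $p(a)=0$ in the second implication, handled by the limiting argument above, and (ii) keeping the strict/non-strict inequalities consistent across the two formulations, which is why I introduce slack factors ($\delta<1/\gamma$, respectively $\gamma=2/\delta$) rather than matching constants exactly. Neither obstruction is deep; the proposition is essentially a homogeneity-and-linearity bookkeeping lemma, and the proof is a two-sided rescaling argument.
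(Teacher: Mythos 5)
Your proof is correct and follows essentially the same route as the paper's: both implications are handled by rescaling test elements, using homogeneity of $p$ and linearity of characters, with the degenerate case $p(a)=0$ treated separately and boundary (strict vs.\ non-strict) issues absorbed by slack in the constants. The only immaterial difference is in that degenerate case of the implication \eqref{eq::continuous} $\Rightarrow$ \eqref{eq::continuous2}: the paper invokes \eqref{eq::continuous} for every tolerance $\lambda>0$ to obtain measure-one statements, whereas you work with the fixed $\varepsilon$ and a countable increasing union plus continuity of the measure from below — both are equally valid.
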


\proof \ \\
Suppose \eqref{eq::continuous} holds and fix $\varepsilon>0$. Taking $0<\delta'<\delta$ with $\delta$ as in \eqref{eq::continuous}, we have that \eqref{eq::continuous2} holds for $\gamma=\frac{1}{\delta'}$. In fact, for any $S\in J$, let $b\in S\cap V$ and distinguish the following two cases.
 \begin{itemize}[leftmargin=*]
\item If $p(b)\neq 0$, then $\frac{\delta' b}{p(b)}\in B_{\delta}(p)\cap S$ and so \eqref{eq::continuous} provides that $\nu_S(\{\alpha\in X(S):\left|\alpha(b)\right|\geq \frac{p(b)}{\delta}\})\leq \varepsilon$, which implies $\nu_S(\{\alpha\in X(S):\left|\alpha(b)\right|\leq \frac{p(b)}{\delta'}\})\geq 1-\varepsilon$.

\item If $p(b)= 0$, then clearly $span(b)\subseteq B_\delta(p)\cap V\cap S$ and so \eqref{eq::continuous} gives that
$\forall \lambda>0,$ $\forall a\in span(b), \ \nu_S\left(\{ \alpha \in X(S) \, : \, |\alpha(a)| \geq 1 \}\right)\leq \lambda,$
i.e. $\forall a\in span(b),$ $\ \nu_S\left(\{ \alpha \in X(S) \, : \, |\alpha(a)| < 1 \}\right)=1.$
Then
$$\forall r>0, \ \nu_S\left(\left\{ \alpha \in X(S) \, : \, |\alpha(b)| < \frac{1}{r} \right\}\right)=1,$$
and so we get
$\nu_S(\{ \alpha \in X(S) \, : \, |\alpha(b)| = 0 \})=1$, which in particular gives that $\nu_S(\{ \alpha \in X(S) \, : \, |\alpha(b)| \leq \frac{p(b)}{\delta'}\})=1\geq 1-\varepsilon $.
\end{itemize}

\indent Conversely, suppose \eqref{eq::continuous2} holds and fix $\varepsilon>0$. Taking $\gamma$ as in \eqref{eq::continuous2}, we have that \eqref{eq::continuous} holds for $\delta\leq\frac{1}{\gamma}$. In fact, for any $S\in J$, let $b\in B_{\delta}(p)\cap S$ and distinguish the following two cases.
 \begin{itemize}[leftmargin=*]
\item If $p(b)\neq 0$, then \eqref{eq::continuous2} provides that $\nu_S(\{\alpha\in X(S):\left|\alpha(b)\right|\leq \gamma p(b) \})\geq 1-\varepsilon$ which implies $\nu_S(\{\alpha\in X(S):\left|\alpha(b)\right|<1\})\geq 1- \varepsilon$.

\item If $p(b)= 0$,  then \eqref{eq::continuous2} provides that $\nu_S(\{\alpha\in X(S):\left|\alpha(b)\right|=0\})\geq 1-\varepsilon$, i.e. $\nu_S(\{\alpha\in X(S):\left|\alpha(b)\right|>0\})\leq \varepsilon$, which implies
$\nu_S(\{\alpha\in X(S):\left|\alpha(b)\right|\geq 1\})\leq \varepsilon$.\end{itemize}
 \endproof

\begin{rem}\thlabel{vanish-on-ker}\ \\
If $\nu$ is a Radon measure on $X(A)$ s.t. $\{{\pi_S}_{\#}\nu :S\in J\}$ is $p-$concentrated, then
\begin{equation}\label{vanish-on-kernel}
\nu(\{ \alpha \in X(A) \, : \, |\alpha(b)| = 0\ \forall b \in \ker(p) \}) =1,
\end{equation}
where ${\pi_S}_{\#}\nu$ denotes the pushforward measure of $\nu$ w.r.t.\! $\pi_S$.
\noindent Indeed, using the same argument as in the proof of \thref{char-cont-projsys}, we can show that  
$
 \forall b\in \ker(p), \nu(\{ \alpha \in X(A) \, : \, |\alpha(b)| = 0 \})={\pi_{\langle b\rangle}}_{\#}\nu(\{ \alpha \in X(\langle b\rangle) \, : \, |\alpha(b)| = 0 \}) = 1$.
This together with the fact that $\{ \alpha \in X(A) \, : \, |\alpha(b)| = 0 \}$ is a closed subset of~$X(A)$ and $\nu$ a Radon measure yields \eqref{vanish-on-kernel} by \cite[Part I, Chapter I, 6.(a)]{S73}.
\end{rem}

Let us establish now a sufficient condition for the $p-$concentration of a collection of representing measures, which we will often exploit in the rest of the article.
\begin{lem}\label{lem-suff-conc}
Let $A$ be an algebra generated by a linear subspace $V$ and $p$ a seminorm on $V$. Given a  linear functional $L$ on $A$ such that $L(\sum A^2)\subseteq [0,\infty)$ and, for each $S\in J$, a representing measure $\nu_S$ for $L\restriction_S$, if
\begin{equation}\label{con-L}
\exists\ C>0 \, : \, L(a^2) \leq C p(a)^2\qquad\text{for all }a\in V
\end{equation}
holds, then $\{\nu_S:S\in J\}$ is $p-$concentrated.
\end{lem}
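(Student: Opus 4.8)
The plan is to verify the defining condition \eqref{eq::continuous} directly, by combining the representing property of each $\nu_S$ with a Markov-type estimate and the quadratic bound \eqref{con-L}. The whole argument reduces to a single application of Markov's inequality.

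First I would observe that, since $B_\delta(p)\subseteq V$ for every $\delta>0$, it suffices to control $\nu_S(\{\alpha:|\alpha(a)|\geq 1\})$ for elements $a\in V\cap S$. For such an $a$ the square $a^2$ lies in $S$, because $S$ is a subalgebra, and since every character $\alpha\in X(S)$ is an algebra homomorphism we have $\widehat{a^2}(\alpha)=\alpha(a^2)=\alpha(a)^2=\hat a(\alpha)^2$. Hence the fact that $\nu_S$ represents $L\!\restriction_S$ gives
\[
\int_{X(S)}\hat a(\alpha)^2\,\dd\nu_S(\alpha)=\int_{X(S)}\widehat{a^2}(\alpha)\,\dd\nu_S(\alpha)=L(a^2).
\]

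Next I would apply Markov's inequality to the non-negative integrand $\hat a(\alpha)^2$: on the set $\{\alpha\in X(S):|\alpha(a)|\geq 1\}$ one has $\hat a(\alpha)^2\geq 1$, so that
\[
\nu_S(\{\alpha\in X(S):|\alpha(a)|\geq 1\})\leq \int_{X(S)}\hat a(\alpha)^2\,\dd\nu_S(\alpha)=L(a^2)\leq C\,p(a)^2,
\]
where the last inequality is the hypothesis \eqref{con-L}. Note that the set in question is Borel measurable, since $\hat a$ is continuous on $(X(S),\tau_{X(S)})$, and that the estimate uses only the non-negativity of $\hat a^2$, so no assumption on the total mass of $\nu_S$ is needed.

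Finally, given $\varepsilon>0$ I would simply set $\delta:=\sqrt{\varepsilon/C}$. Then for every $S\in J$ and every $a\in B_\delta(p)\cap S$ we have $p(a)\leq\delta$, and the chain above yields $\nu_S(\{\alpha:|\alpha(a)|\geq 1\})\leq C\,p(a)^2\leq C\delta^2=\varepsilon$, which is precisely \eqref{eq::continuous}; thus $\{\nu_S:S\in J\}$ is $p$-concentrated. There is no genuine obstacle here, as the proof is essentially a one-line Markov estimate; the only points requiring a little care are noting that $B_\delta(p)\subseteq V$ (so that both the homomorphism identity $\widehat{a^2}=\hat a^2$ and the bound \eqref{con-L} are available), and observing that the degenerate case $p(a)=0$ is handled automatically, since then \eqref{con-L} forces $L(a^2)=0$ and hence $\hat a=0$ $\nu_S$-almost everywhere.
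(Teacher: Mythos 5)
Your proof is correct and is essentially identical to the paper's: the paper also chooses $\delta=\sqrt{\varepsilon/C}$ and applies the same Markov-type estimate $\nu_S(\{\alpha\in X(S):|\alpha(a)|\geq 1\})\leq \int_{X(S)}\hat a^2\,\dd\nu_S=L(a^2)\leq Cp(a)^2\leq C\delta^2\leq\varepsilon$. The extra remarks you make (measurability of the set, the homomorphism identity $\widehat{a^2}=\hat a^2$, the degenerate case $p(a)=0$) are left implicit in the paper but are accurate.
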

\begin{proof} Let $\varepsilon>0$ and take $\delta:= \sqrt{\frac{\varepsilon}{C}}$. Then for all  $a\in B_\delta(p)\cap S$
$$ \nu_S(\{\alpha\in X(S):\left|\alpha(a)\right|\geq 1\})\leq \int_{X(S)} \hat{a}^2 d\nu_S=L(a^2)\leq Cp(a)^2\leq C\delta^2\leq \varepsilon$$
i.e. $\{\nu_S:S\in J\}$ fulfills \eqref{eq::continuous}.
\end{proof}
With a similar proof one gets the following generalization of \eqref{con-L}
\begin{equation}\label{con-L-Patrick}
\forall\varepsilon>0\ \exists\ C>0 : L(a^2)\leq Cp(a)^2 +\varepsilon\qquad\text{for all }a\in V.
\end{equation}

\subsection{The case when $\tau_V$ generated by a Hilbertian seminorm}\label{sec:hilb-sem}

\begin{thm}
\thlabel{MainThm}
Let $A$ be an algebra generated by a linear subspace $V\subseteq A$, $q$ a Hilbertian seminorm on $V$ such that $\{\alpha\in X(A):\alpha\!\restriction_V\text{is }q\text{--continuous}\}\neq\emptyset$ and $J:=\{\langle W\rangle: W \text{ finite\ dimensional\ subspace of }V\}.$ Let $L$ be a normalized linear functional on $A$.

There exists a representing Radon measure $\nu$ for $L$ with support contained in $\{\alpha\in X(A):\alpha\!\restriction_V\text{is }q\text{--continuous}\}$ if and only if there exists a Hilbertian seminorm $p$ on $V$ with $\tr(p/q)<\infty$ and for each $S\in J$ there exists a representing Radon measure $\nu_S$ for $L\!\restriction_S$ with support contained in $X(S)$ and such that $\{\nu_S:S\in J\}$ is $p-$concentrated.
\end{thm}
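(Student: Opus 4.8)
The plan is to prove the two implications separately, using the projective limit result of \cite{InKuKuMi22} as the bridge between $X(A)$ and the finite-dimensional pieces $X(S)$, and \thref{lem::fund-lem-Umemura} as the quantitative core converting the (weak) $p$-concentration into (strong) concentration on compact $q$-balls.

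\textbf{Sufficiency.} Assume such a $p$ and such representing measures $\{\nu_S\}$ exist. For $S=\langle W\rangle\in J$ with $W\subseteq V$ finite-dimensional, I would push $\nu_S$ forward along the injective restriction map $\phi_W\colon X(S)\to W^\ast=W'$ to obtain a probability measure $\mu_S:=(\phi_W)_\#\nu_S$ on $W'$ (total mass $L(1)=1$). Restricting the $p$-concentration condition \eqref{eq::continuous} to elements $a=v\in W\subseteq V\cap S$ shows that, with a $\delta$ uniform in $S$, $\mu_S(\{\ell\in W':|\ell(v)|\geq 1\})\leq\varepsilon$ for all $v\in B_\delta(p\restriction_W)$; this is exactly the hypothesis of \thref{lem::fund-lem-Umemura} for the Hilbertian seminorms $p\restriction_W$ and $q\restriction_W$, and $\tr(p\restriction_W/q\restriction_W)\leq\tr(p/q)<\infty$ by \thref{prop::trace}. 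Applying \thref{lem::fund-lem-Umemura} with $q\restriction_W$ replaced by $Cq\restriction_W$ and using the scaling $\tr(p/\delta Cq)=(\delta C)^{-2}\tr(p/q)$ of \thref{prop::trace}, for each $\eta>0$ I first fix $\varepsilon$ (hence $\delta$, uniform in $S$) and then $C=C(\eta)$ large, obtaining
\[
\nu_S\big(\{\beta\in X(S):|\beta(w)|\leq Cq(w)\ \forall w\in W\}\big)\;\geq\;1-\eta\qquad\text{for all }S\in J.
\]
The sets $K_C:=\{\alpha\in X(A):|\alpha(v)|\leq Cq(v)\ \forall v\in V\}$ are compact in $X(A)$: the multiplicativity bound $|\alpha(a)|\leq|c_0|+\sum_k\prod_j Cq(v_{k,j})$ for $a=c_0+\sum_k\prod_j v_{k,j}$ embeds $K_C$ into a product of compact intervals. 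The displayed uniform concentration on the traces of the $K_C$ is precisely the Prokhorov condition of \cite{InKuKuMi22}; invoking it yields a representing Radon measure $\nu$ for $L$ with $\nu(K_C)\geq 1-\eta$, and letting $\eta\to 0$ localizes $\supp(\nu)$ inside $\bigcup_{C>0}K_C=\{\alpha\in X(A):\alpha\restriction_V\text{ is }q\text{-continuous}\}$.

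\textbf{Necessity.} Conversely, given a representing Radon measure $\nu$ supported on the $q$-continuous characters, I would set $\nu_S:=(\pi_S)_\#\nu$; these represent $L\restriction_S$ since $\widehat{a}\circ\pi_S=\widehat{a}$ on $X(A)$ for $a\in S$. It remains to manufacture a single $p$ with $\tr(p/q)<\infty$ making $\{\nu_S\}$ $p$-concentrated. With $K_n:=\{\alpha:|\alpha(v)|\leq nq(v)\ \forall v\in V\}$, the support hypothesis gives $\nu(K_n)\uparrow 1$, and on $K_n$ one has $q'(\alpha\restriction_V)^2=\sum_{e\in E}\alpha(e)^2\leq n^2$ for a complete $q$-orthonormal system $E$. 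I would then pick a strictly positive decreasing weight $g\colon[0,\infty)\to(0,\infty)$ with $\int g(q'(\alpha\restriction_V))\,q'(\alpha\restriction_V)^2\,\dd\nu<\infty$ (possible precisely because $\nu(K_n)\uparrow 1$) and define the Hilbertian seminorm
\[
p(v)^2:=\int_{X(A)}g\big(q'(\alpha\restriction_V)\big)\,\widehat{v}(\alpha)^2\,\dd\nu(\alpha).
\]
By Tonelli $\tr(p/q)=\sum_{e\in E}p(e)^2=\int g(q'(\alpha\restriction_V))\,q'(\alpha\restriction_V)^2\,\dd\nu<\infty$, while splitting $X(A)=K_n\cup K_n^c$ and using $g\geq g(n)>0$ on $K_n$ together with Chebyshev gives $\nu(\{|\widehat{v}|\geq 1\})\leq\nu(K_n^c)+g(n)^{-1}p(v)^2$; since $\nu_S(\{|\beta(v)|\geq1\})=\nu(\{|\widehat v|\geq 1\})$ for $v\in V$, this is exactly the continuity at the origin expressing the $p$-concentration \eqref{eq::continuous} (equivalently \thref{char-cont-projsys}).

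\textbf{Main obstacle.} The delicate part is the sufficiency direction: matching the uniform-in-$S$ output of \thref{lem::fund-lem-Umemura} to the exact Prokhorov condition of \cite{InKuKuMi22}, i.e.\ the correct bookkeeping of the quantifiers $\varepsilon,\delta,C$ under the rescaling $q\mapsto Cq$, the identification of the relevant compacts as the $K_C$, and the passage from concentration of the finite-dimensional traces to the support localization of the limit measure. By contrast, in the necessity direction the only subtle point is achieving finite trace and $p$-concentration \emph{simultaneously}, which the weight $g$ resolves.
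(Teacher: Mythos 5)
Your necessity argument is essentially the paper's own (the paper takes the weight $\sum_{n}n^{-4}\ind_{K_n}$ where you take $g(q'(\alpha\restriction_V))$; both give $\tr(p/q)<\infty$ and the $p$-concentration in the same way), and your finite-dimensional reduction via \thref{lem::fund-lem-Umemura} is also the paper's core idea. The genuine gap is in the sufficiency direction, exactly at the point you flagged as delicate. Your displayed estimate controls a character $\beta\in X(S)$ only on the \emph{chosen generating subspace} $W$ of $S=\langle W\rangle$, i.e.\ it bounds $\nu_S$ on $\widetilde{K}^{(S)}:=\{\beta\in X(S):|\beta(w)|\leq Cq(w)\ \forall w\in W\}$. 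But the Prokhorov condition of \cite[Proposition~1.18]{InKuKuMi22} requires compacts $K^{(S)}\subseteq X(S)$ satisfying \emph{both} $\nu_S(K^{(S)})\geq 1-\eta$ and the compatibility $\pi_{S,T}(K^{(T)})\subseteq K^{(S)}$ for $S\subseteq T$, and the sets $\widetilde{K}^{(S)}$ are not nested. Concretely: take $A=\RR[x]$, $V=\mathrm{span}(x,x^2)$, $q$ the Euclidean norm in the basis $\{x,x^2\}$, $S=\langle \mathrm{span}(x^2)\rangle\subseteq T=\langle \mathrm{span}(x)\rangle$, and let $\beta$ be evaluation at $t$ with $\sqrt{C}<|t|\leq C$; then $\beta\in\widetilde{K}^{(T)}$ but $|\beta(x^2)|=t^2>C$, so $\pi_{S,T}(\beta)\notin\widetilde{K}^{(S)}$. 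For the same reason your sets are \emph{not} ``the traces of $K_C$'': one has $\pi_S(K_C)\subseteq\{\beta\in X(S):|\beta(v)|\leq Cq(v)\ \forall v\in S\cap V\}\subseteq\widetilde{K}^{(S)}$, with the inclusions going the wrong way, so a measure bound on $\widetilde{K}^{(S)}$ says nothing about the traces. Nor can you simply re-run your argument with $W$ replaced by $S\cap V$, because $S\cap V$ may strictly contain $W$ and can even be infinite dimensional (e.g.\ $V=\mathrm{span}\{x^n:n\geq 1\}$ in $\RR[x]$ and $S=\langle \mathrm{span}(x)\rangle$ give $S\cap V=V$), so \thref{lem::fund-lem-Umemura} does not apply to it directly.

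The missing idea is precisely the paper's \thref{fundamental-lemma-um}: define $K^{(S)}:=\{\beta\in X(S):|\beta(v)|\leq Cq(v)\ \forall v\in S\cap V\}$, which is compact and nested \emph{by construction}; then apply \thref{lem::fund-lem-Umemura} to $(\phi_{W'})_{\#}\nu_S$ for \emph{every} finite-dimensional subspace $W'\subseteq S\cap V$, not only for the generating one --- this is legitimate because your $\delta$ and $C$ are uniform in $S$ and $W'$, and $\tr(p\restriction_{W'}/q\restriction_{W'})\leq\tr(p/q)$ by \thref{prop::trace} --- and finally pass to the limit along the directed family of such $W'$, using that a Radon measure is continuous along decreasing nets of closed sets (the paper invokes \cite[Propositions~7.2.2-(i) and~7.2.5-(iii)]{Bog07b}), since $K^{(S)}=\bigcap_{W'}\phi_{W'}^{-1}\bigl(B_1((Cq\restriction_{W'})')\bigr)$. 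The same limiting argument is needed once more at the end, to pass from $\nu_S(K^{(S)})\geq 1-\eta$ for all $S$ to $\nu(K_C)\geq 1-\eta$ for the limit measure, since $K_C=\bigcap_{S\in J}\pi_S^{-1}(K^{(S)})$. With these two repairs your outline becomes the paper's proof; without them, the application of \cite[Theorem~3.10-(ii)]{InKuKuMi22} is not justified.
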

\begin{proof}
For each $S\in J$, let $\nu_S$ be a representing Radon measure for $L\!\restriction_S$ with support contained in $X(S)$ and $p$ be a Hilbertian seminorm $p$ on $V$ with $\tr(p/q)<\infty$ such that $\{\nu_S:S\in J\}$ is $p-$concentrated.
Let us first show that the family $\{\nu_S:S\in J\}$ fulfils the so-called Prokhorov condition by means of the characterization in \cite[Proposition~1.18]{InKuKuMi22}, that is, we aim to show that for all $\varepsilon>0$ and for all $S\in J,$ there exists $K^{(S)}\subseteq X(S)$ compact such that $ \nu_S(K^{(S)})\geq 1-\varepsilon$ and $\pi_{S,T}(K^{(T)})\subseteq K^{(S)}$ for all $T\in J$ with $S\subseteq T$.

Let $\varepsilon>0$. Since $\{\nu_S:S\in J\}$ is $p-$concentrated and $\tr(p/q)<\infty$, we can take $\delta>0$ as in~\eqref{eq::continuous} and set $r_\varepsilon:=q(\delta\sqrt{\varepsilon})^{-1}\sqrt{\tr(p/q)}$. For each $S\in J$, define $K^{(S)}:=\{\alpha\in X(S):\left|\alpha(v)\right|\leq r_\varepsilon(v)\text{ for all }v\in S\cap V\}$. Then $K^{(S)}$ is compact in $X(S)$ as it is closed and embeds into the compact product $\prod_{v\in S\cap V}[-r_\varepsilon(v),r_\varepsilon(v)]$ via the continuous map $\alpha\mapsto(\alpha(v))_{v\in S\cap V}$.
Now for any $S\subseteq T$ in $J$ the inclusion $\pi_{S,T}(K^{(T)})\subseteq K^{(S)}$ holds by definition and for each $S\in J$ the estimate $\nu_S(K^{(S)})\geq 1-14\varepsilon$ holds by~\thref{fundamental-lemma-um} below. Hence, the family $\{\nu_S:S\in J\}$ fulfils Prokhorov's condition and so we can apply~\cite[Theorem~3.10-(ii)]{InKuKuMi22}, which guaranteed the existence of a representing Radon measure~$\nu$ for~$L$ with support contained in $X(A)$. It remains to show that the support of $\nu$ is contained in $\{\alpha\in X(A):\alpha\!\restriction_V\text{ is }q\text{--continuous}\}$. For this, set $\phi_{V}: X(A)\to V^*, \alpha\mapsto \alpha\restriction_V$ and
$$
K_\varepsilon :=\{\alpha\in X(A):\left|\alpha(v)\right|\leq r_\varepsilon(v)\text{ for all }v\in V\} = \bigcap_{S\in J}\pi_S^{-1}(K^{(S)}).
$$
Then \cite[Propositions~7.2.2-(i) and~7.2.5-(iii)]{Bog07b} and \thref{fundamental-lemma-um} imply that $$\nu(K_\varepsilon)=\lim_{S\in I}\nu_S(K^{(S)})\geq 1-14\varepsilon.$$
Since $K_\varepsilon\subseteq{\phi_V}^{-1}(V_{r_\varepsilon}^\prime)={\phi_V}^{-1}(V_q^\prime)$ for all $\varepsilon>0$ this yields that $\nu({\phi_V}^{-1}(V_q^\prime))=1$, i.e. $\nu$ has support contained in ${\phi_V}^{-1}(V_q^\prime)=\{\alpha\in X(A):\alpha\!\restriction_V\text{ is }q\text{--continuous}\}$.

Conversely, let $\nu$ be a representing Radon measure for $L$ with support contained in $\{\alpha\in X(A):\alpha\!\restriction_V\text{ is }q\text{--continuous}\}$.
Then, for each $S\in J$, the push-forward $\nu_S:={\pi_S}_{\#}\nu$ is a representing Radon measure for $L\!\restriction_S$ with support contained in~$X(S)$. 
For each $n\in\NN$, set $K_n:={\phi_V}^{-1}(B_n(q^\prime))$ and define \begin{equation}\label{thm::Sazonov::eq1}
p(v)^2:=\sum_{n=1}^\infty\frac{1}{n^4}\int_{K_n}\hat{v}^2\dd\nu\qquad\text{for all }v\in V.
\end{equation}
It is easy to verify that $p$ defines a Hilbertian seminorm on $V$. Then
for each $E\in\mathrm{FON}(q)$ we have that
$$
\sum_{e\in E}p(e)^2
\stackrel{\eqref{thm::Sazonov::eq1}}{=}\sum_{n=1}^\infty\frac{1}{n^4}\int_{K_n}\sum_{e\in E}\hat{e}^2\dd\nu
\stackrel{\text{Lemma}\ \ref{thm::Sazonov::lem2}}{\leq} \sum_{n=1}^\infty\frac{1}{n^4}\int_{K_n}n^2\dd\nu
\leq\sum_{n=1}^\infty\frac{1}{n^2}
<2,
$$
that is, $\tr(p/q)\leq 2<\infty$.
To show that $\{\nu_S:S\in J\}$ is $p-$concentrated, let $\varepsilon>0$
and take $n\in\NN$ such that $\nu(X(A)\setminus K_n)\leq 2^{-1}\varepsilon$. Then there exists $\delta>0$ such that $n^4\delta^2\leq 2^{-1}\varepsilon$ and so, for each $S\in J$ and each $v\in B_\delta(p)\cap S$, we obtain that
\begin{eqnarray*}
\nu_S(\{\alpha\in X(S):\left|\alpha(v)\right|\geq 1\}) &\leq& \nu(\{\alpha\in X(A):\left|\alpha(v)\right|\geq 1\}\cap K_n) + \nu(X(A)\setminus K_n)\\
&\leq& \int_{K_n}\hat{v}^2\dd\nu_S+2^{-1}\varepsilon
\leq n^4p(v)^2+2^{-1}\varepsilon
\leq \varepsilon,
\end{eqnarray*}
i.e.~\eqref{eq::continuous} holds.
\end{proof}

\begin{lem}
\thlabel{fundamental-lemma-um}

For each $S\in J$, the estimate $\nu_S(K^{(S)})\geq 1-14\varepsilon$ holds.
\end{lem}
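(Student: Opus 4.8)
The plan is to derive the estimate from the finite-dimensional result \thref{lem::fund-lem-Umemura}, applied to suitable push-forwards of $\nu_S$, after first reducing the (possibly infinite-dimensional) condition defining $K^{(S)}$ to finitely many directions. Throughout I keep the data fixed in the proof of \thref{MainThm}: the given $\varepsilon>0$, the $\delta>0$ extracted from the $p$-concentration \eqref{eq::continuous}, and the seminorm $r_\varepsilon=c\,q$ with $c:=\sqrt{\tr(p/q)}/(\delta\sqrt\varepsilon)$, so that $K^{(S)}=\{\alpha\in X(S):|\alpha(v)|\le c\,q(v)\ \forall v\in S\cap V\}$ and $\nu_S$ is a Radon probability measure (as $L(1)=1$).

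First I would reduce to finite dimensions. Write $W_S:=S\cap V$, which in general is \emph{not} finite-dimensional, and for each finite-dimensional subspace $U\subseteq W_S$ set $K_U:=\{\alpha\in X(S):|\alpha(v)|\le c\,q(v)\ \forall v\in U\}$. Each set $\{\alpha\in X(S):|\alpha(v)|\le c\,q(v)\}$ is closed (the map $\alpha\mapsto\alpha(v)$ is $\tau_{X(S)}$-continuous), so each $K_U$ is closed; the family $\{K_U\}_U$ is directed downward (since $K_{U_1+U_2}\subseteq K_{U_1}\cap K_{U_2}$) and $K^{(S)}=\bigcap_U K_U$. Passing to complements, $X(S)\setminus K^{(S)}=\bigcup_U\bigl(X(S)\setminus K_U\bigr)$ is a directed union of open sets. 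Any compact subset of this union lies in a single $X(S)\setminus K_U$ (extract a finite subcover and use directedness), so inner regularity of the Radon measure $\nu_S$ gives $\nu_S(X(S)\setminus K^{(S)})=\sup_U\nu_S(X(S)\setminus K_U)$. Hence it suffices to prove $\nu_S(K_U)\ge 1-14\varepsilon$ for every finite-dimensional $U\subseteq W_S$.

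Next I would treat the finite-dimensional estimate. Fix such a $U$, let $\rho_U\colon X(S)\to U^\ast$, $\alpha\mapsto\alpha\restriction_U$ (continuous, hence measurable), and put $\mu:={\rho_U}_{\#}\nu_S$, a probability measure on $U^\ast=U^\prime$. Since $\rho_U^{-1}\bigl(B_1((c\,q\restriction_U)^\prime)\bigr)=K_U$, we have $\nu_S(K_U)=\mu\bigl(B_1((c\,q\restriction_U)^\prime)\bigr)$, so I apply \thref{lem::fund-lem-Umemura} on $U$ with the Hilbertian seminorms $p\restriction_U$ and $c\,q\restriction_U$. Its hypothesis holds for the pair $(\varepsilon,\delta)$: if $v\in B_\delta(p\restriction_U)$ then $v\in B_\delta(p)\cap S$, so \eqref{eq::continuous} yields $\mu(\{l:|l(v)|\ge1\})=\nu_S(\{\alpha:|\alpha(v)|\ge1\})\le\varepsilon$. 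For the trace term, \thref{prop::trace} (monotonicity under restriction together with the scaling identity) gives $\tr(p\restriction_U/\delta\,c\,q\restriction_U)\le\tr(p/\delta c\,q)=(\delta c)^{-2}\tr(p/q)$, and the choice of $c$ yields $\delta c=\sqrt{\tr(p/q)/\varepsilon}$, whence $(\delta c)^{-2}\tr(p/q)=\varepsilon$. Thus \thref{lem::fund-lem-Umemura} delivers $\nu_S(K_U)\ge 1-7(\varepsilon+\varepsilon)=1-14\varepsilon$, uniformly in $U$, and combining this with the reduction of the previous paragraph completes the proof.

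The only genuinely delicate point is the reduction step: because $S\cap V$ may be infinite-dimensional, \thref{lem::fund-lem-Umemura} cannot be invoked for $W_S$ directly, and one must pass to finite-dimensional sections and then control a possibly uncountably indexed directed intersection. This is exactly where inner regularity of the Radon measure $\nu_S$ is needed; the remaining bookkeeping (verifying the hypothesis of \thref{lem::fund-lem-Umemura} from $p$-concentration and matching the constant $c$ so that the trace contributes exactly $\varepsilon$) is routine.
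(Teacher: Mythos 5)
Your proposal is correct and follows essentially the same route as the paper: push $\nu_S$ forward onto the dual of each finite-dimensional subspace of $S\cap V$, apply \thref{lem::fund-lem-Umemura} to the pair $(p\restriction_U, r_\varepsilon\restriction_U)$ with the $(\varepsilon,\delta)$ coming from $p$-concentration and the scaling $\tr(p/\delta r_\varepsilon)=\varepsilon$, then pass to the directed intersection $K^{(S)}=\bigcap_U K_U$. The only difference is that where the paper cites \cite[Propositions~7.2.2-(i) and~7.2.5-(iii)]{Bog07b} for the net-continuity $\nu_S(K^{(S)})=\lim_U \nu_S(K_U)$, you prove it by hand via inner regularity plus a finite-subcover/directedness argument, which is precisely the standard proof of those cited facts.
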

\begin{proof}
Let $S\in J$ and set $I:=\{W\subseteq S\cap V: W\text{ fin.\ dim.\ subspace of } S\cap V\}$.
Let $W\in I$ and consider the continuous restriction map $\phi_W\colon X(S)\to W^\prime$. Then the push-forward $\mu_W^\prime:={\phi_W}_{\#}\nu_S$ is a probability measure on $W^\prime$ that satisfies
$$
\mu_W^\prime(\{l\in W^\prime:\left| l(w)\right|\geq 1\}) = \nu_S(\{\alpha\in X(S):\left|\alpha(w)\right|\geq 1\})\leq \varepsilon
$$
for all $w\in B_\delta(p\!\restriction_W)$.
Then \thref{lem::fund-lem-Umemura} implies that
$$
\nu_S({\phi_W}^{-1}(B_1(r_\varepsilon\!\restriction_W^\prime)))=\mu_W^\prime(B_1(r_\varepsilon\!\restriction_W^\prime)))\geq 1-7(\varepsilon+\tr(p\!\restriction_W/\delta r\!\restriction_W))=1-14\varepsilon
$$
as $\tr(p\!\restriction_W/\delta r_\varepsilon\!\restriction_W)\leq\tr(p/\delta r_\varepsilon)$ and, by \thref{prop::trace}-\eqref{properties-trace::2}, $\tr(p/\delta r_\varepsilon)\leq\varepsilon$.

Since $K^{(S)}=\bigcap_{W\in I} {\phi_W}^{-1}(B_1(r_\varepsilon\!\restriction_{W^\prime}))$ by definition, \cite[Propositions~7.2.2-(i) and~7.2.5-(iii)]{Bog07b} imply that
$
\nu_S(K^{(S)})=\lim_{W\in I}\nu_S({\phi_W}^{-1}(B_1(r\!\restriction_{W^\prime})))\geq 1-14\varepsilon.
$
\end{proof}

\begin{lem}
\thlabel{thm::Sazonov::lem2}
Let $n\in\NN$ and $E\in\mathrm{FON}(q)$. Then $\sum\limits_{e\in E}\hat{e}(\alpha)\leq n^2$ for all $\alpha\in K_n$.
\end{lem}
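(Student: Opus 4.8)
The plan is to unpack the membership $\alpha\in K_n$ and then reduce everything to Bessel's inequality in the Hilbert space attached to $q$. By definition $K_n=\phi_V^{-1}(B_n(q'))$, so $\alpha\in K_n$ means that $\alpha\!\restriction_V$ is $q$--continuous with $q'(\alpha\!\restriction_V)\leq n$, equivalently $|\alpha(v)|\leq n\,q(v)$ for all $v\in V$. The quantity that this bound actually controls is the sum of \emph{squares} $\sum_{e\in E}\hat e(\alpha)^2=\sum_{e\in E}\alpha(e)^2$, and it is exactly this square--sum that is bounded by $n^2$ and that enters the trace estimate for $\sum_{e\in E}p(e)^2$ in the proof of \thref{MainThm} (where $\sum_{e\in E}\hat e^2$ appears under the integral). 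First I would therefore record the inequality in the form in which it is used, namely $\sum_{e\in E}\hat e(\alpha)^2\leq n^2$ for all $\alpha\in K_n$.

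Next I would build the Hilbert space. Since $q$ is Hilbertian, let $\langle\cdot,\cdot\rangle_q$ be its bilinear form, pass to the quotient $V/\ker(q)$, and complete it to a Hilbert space $(H_q,\langle\cdot,\cdot\rangle_q)$; write $[v]$ for the image of $v\in V$. Because $|\alpha(v)|\leq n\,q(v)$, the assignment $[v]\mapsto\alpha(v)$ is well defined on the dense subspace $\{[v]:v\in V\}$ and bounded by $n$, hence extends uniquely to a continuous functional on $H_q$ of norm at most $n$. By the Riesz representation theorem there is $h\in H_q$ with $\|h\|\leq n$ and $\alpha(v)=\langle[v],h\rangle_q$ for all $v\in V$. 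The hypothesis $E\in\mathrm{FON}(q)$ says precisely that $\{[e]:e\in E\}$ is a finite orthonormal set in $H_q$, so Bessel's inequality yields
\[
\sum_{e\in E}\hat e(\alpha)^2=\sum_{e\in E}\langle[e],h\rangle_q^2\leq\|h\|^2\leq n^2 ,
\]
which is the asserted bound.

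The only genuine subtlety, and hence the step I would treat most carefully, is the presence of a nontrivial kernel $\ker(q)$: one must check that the induced functional on $V/\ker(q)$ is well defined (immediate from $|\alpha(v)|\leq n\,q(v)$, which forces $\alpha$ to vanish on $\ker(q)$) and that the orthonormality of $E$ survives the quotient. It is worth emphasizing that the square is essential here: the plain sum satisfies only $\sum_{e\in E}\hat e(\alpha)=\alpha\big(\sum_{e\in E}e\big)\leq n\,q\big(\sum_{e\in E}e\big)=n\sqrt{|E|}$, using $q\big(\sum_{e\in E}e\big)^2=\sum_{e\in E}q(e)^2=|E|$ by $q$--orthonormality; this grows with $|E|$ and is not bounded by $n^2$ uniformly. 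It is precisely the square--sum delivered by Bessel's inequality that produces the cardinality--free bound $n^2$ needed in the trace computation of \thref{MainThm}.
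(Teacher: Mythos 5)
Your proof is correct and takes essentially the same route as the paper: the paper also reduces to the Riesz representation theorem, working in the finite-dimensional Hilbert space $(\mathrm{span}(E),q)$ — where $E$ is a \emph{complete} $q$-orthonormal system, so Parseval's equality replaces your Bessel inequality in the completion of $V/\ker(q)$ — to obtain $\sum_{e\in E}\hat e(\alpha)^2=q(a)^2\leq n^2$ for the Riesz vector $a$ with $q(a)=q'(\alpha\restriction_{\mathrm{span}(E)})\leq n$. You are also right that the missing square in the statement is a typo: the paper's own proof establishes, and the trace estimate in the proof of \thref{MainThm} uses, the bound on $\sum_{e\in E}\hat e(\alpha)^2$, and your remark that the unsquared sum only admits the cardinality-dependent bound $n\sqrt{|E|}$ confirms this.
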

\begin{proof}
Let $\alpha\in K_n$ and set $H:=\mathrm{span}(E)$ (for convenience, set $\alpha=\alpha\!\restriction_H$ and $q=q\!\restriction_H$). Since $E\in\mathrm{FON}(q)$ is finite, the space $(H,q)$ is Hilbertian and $E$ is a complete $q-$orthonormal system.
In particular, $\alpha\!\restriction_H\in B_n(q\!\restriction_H^\prime)$ and by the Riesz representation theorem there exists $a\in H$ such that $\alpha(x)=\langle x,a\rangle_q$ for all $x\in H$ and $q(a)=q^\prime(\alpha)\leq n$.
Therefore,
$$
\sum_{e\in E}\hat{e}(\alpha)^2=\sum_{e\in E}\alpha(e)^2=\sum_{e\in E}\langle e,a\rangle_q^2=q(a)^2\leq n^2
$$
yields the assertion.
\end{proof}

Using exactly the same proof scheme but exploiting \cite[Corollary~3.11-(ii)]{InKuKuMi22} instead of \cite[Theorem~3.10-(ii)]{InKuKuMi22}, it is easy to obtain the following more general version of \thref{MainThm} including the localization of the support of the representing measure.

\begin{thm}\thlabel{MainThm-supp}\ \\
Let $A$ be an algebra generated by a linear subspace $V\subseteq A$, $K\subseteq X(A)$ closed, $q$ a Hilbertian seminorm on $V$ such that $\{\alpha\in K:\alpha\!\restriction_V\text{is }q\text{--continuous}\}\neq\emptyset$ and $J:=\{\langle W\rangle: W \text{ finite\ dimensional\ subspace of }V\}.$ Let $L$ be a normalized linear functional on $A$ and $Q$ a quadratic module such that $K=K_Q$.

There exists a representing Radon measure $\nu$ for $L$ with support contained in $\{\alpha\in K: \alpha\!\restriction_V\text{is }q\text{--continuous}\}$ if and only if there exists a Hilbertian seminorm $p$ on $V$ with $\tr(p/q)<\infty$ and for each $S\in J$  there exists a representing Radon measure $\nu_S$ for $L\!\restriction_S$ with support contained in $K_{Q\cap S}$ and such that $\{\nu_S:S\in J\}$ is $p-$concentrated.
\end{thm}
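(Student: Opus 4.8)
The plan is to recycle, almost verbatim, the two halves of the proof of \thref{MainThm}, threading the support constraint $K=K_Q$ through each step and replacing the reconstruction result \cite[Theorem~3.10-(ii)]{InKuKuMi22} by its localized counterpart \cite[Corollary~3.11-(ii)]{InKuKuMi22}. The only genuinely new point is a bookkeeping identity showing that the stagewise localizations $K_{Q\cap S}$ glue, under the projective limit, back to the prescribed closed set $K=K_Q$ on $X(A)$; everything else is word-for-word the argument already given.

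For the forward direction I would proceed exactly as in \thref{MainThm}. Assuming $\nu$ is a representing Radon measure for $L$ supported in $\{\alpha\in K:\alpha\!\restriction_V\text{ is }q\text{--continuous}\}$, I set $\nu_S:={\pi_S}_\#\nu$, a representing Radon measure for $L\!\restriction_S$. To check the new localization, note that $\supp(\nu)\subseteq K_Q$ and that for $a\in Q\cap S$ one has $\widehat{a}(\pi_S(\alpha))=\alpha(a)\geq 0$ for every $\alpha\in K_Q$; hence $\pi_S(\supp(\nu))\subseteq K_{Q\cap S}$, so $\supp(\nu_S)\subseteq K_{Q\cap S}$. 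I would then define the Hilbertian seminorm $p$ by the same formula \eqref{thm::Sazonov::eq1} on the sets $K_n:={\phi_V}^{-1}(B_n(q'))$; the proofs that $\tr(p/q)\leq 2<\infty$ (through \thref{thm::Sazonov::lem2}) and that $\{\nu_S:S\in J\}$ is $p$-concentrated are unchanged, since neither uses the support hypothesis beyond what was just recorded.

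For the converse, starting from $p$ with $\tr(p/q)<\infty$ and $p$-concentrated representing measures $\nu_S$ supported in $K_{Q\cap S}$, I would first reproduce the Prokhorov step: picking $\delta$ as in \eqref{eq::continuous} and forming the same compact sets $K^{(S)}$ and seminorm $r_\varepsilon$ as in \thref{MainThm}, \thref{fundamental-lemma-um} gives $\nu_S(K^{(S)})\geq 1-14\varepsilon$ together with $\pi_{S,T}(K^{(T)})\subseteq K^{(S)}$, so Prokhorov's condition holds. Now, instead of \cite[Theorem~3.10-(ii)]{InKuKuMi22}, I apply \cite[Corollary~3.11-(ii)]{InKuKuMi22}, which yields a representing Radon measure $\nu$ for $L$ with $\supp(\nu)\subseteq\bigcap_{S\in J}\pi_S^{-1}(K_{Q\cap S})$. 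Since $V$ generates $A$, every element of $A$ — in particular every element of $Q$ — lies in some $S\in J$, whence $Q=\bigcup_{S\in J}(Q\cap S)$ and therefore
\begin{equation*}
\bigcap_{S\in J}\pi_S^{-1}(K_{Q\cap S})=\{\alpha\in X(A):\alpha(a)\geq 0\ \forall a\in Q\}=K_Q=K,
\end{equation*}
so $\supp(\nu)\subseteq K$. The refinement to $q$-continuous characters is then identical to \thref{MainThm}: with $K_\varepsilon:=\bigcap_{S}\pi_S^{-1}(K^{(S)})$, the estimates \cite[Propositions~7.2.2-(i) and~7.2.5-(iii)]{Bog07b} and \thref{fundamental-lemma-um} give $\nu(K_\varepsilon)\geq 1-14\varepsilon$, and since $K_\varepsilon\subseteq{\phi_V}^{-1}(V_q')$ for all $\varepsilon>0$ I conclude $\nu({\phi_V}^{-1}(V_q'))=1$, i.e.\ $\supp(\nu)\subseteq\{\alpha\in K:\alpha\!\restriction_V\text{ is }q\text{--continuous}\}$.

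The main obstacle is mild and entirely administrative: it is the compatibility identity $\bigcap_{S}\pi_S^{-1}(K_{Q\cap S})=K_Q$, namely verifying that the finite-stage localizations are mutually consistent under the restriction maps and that their projective limit is precisely the prescribed $K$. This rests only on $A=\bigcup_{S\in J}S$, together with the fact that \cite[Corollary~3.11-(ii)]{InKuKuMi22} is designed to deliver exactly this localized projective-limit measure.
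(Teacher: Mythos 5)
Your proposal is correct and follows exactly the paper's own route: the paper proves \thref{MainThm-supp} by declaring that the proof of \thref{MainThm} goes through verbatim with \cite[Corollary~3.11-(ii)]{InKuKuMi22} replacing \cite[Theorem~3.10-(ii)]{InKuKuMi22}, which is precisely your plan. Your explicit verification of the gluing identity $\bigcap_{S\in J}\pi_S^{-1}(K_{Q\cap S})=K_Q$ (via $Q=\bigcup_{S\in J}(Q\cap S)$) and of $\supp({\pi_S}_{\#}\nu)\subseteq K_{Q\cap S}$ in the forward direction simply fills in details the paper leaves implicit.
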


\begin{rem}\thlabel{rem-mainThm}\
\begin{enumerate}
\item If in \thref{MainThm} (resp.\! \thref{MainThm-supp}) we assume for each $S\in J$ the \emph{uniqueness} of the representing measure for $L\!\restriction_S$ with support contained in $X(S)$ (resp. in $K_{Q\cap S}$) , then by \cite[Remark~3.12-(ii)]{InKuKuMi22} we get the uniqueness of the corresponding representing measure for $L$.
\item If in \thref{MainThm} (resp. \thref{MainThm-supp}) we take $V=A$, we obtain a criterion for the existence of a representing measure for $L$ with support contained in $\sp(q)$ (resp. on $K_Q\cap\sp(q)$).
\item Combining \thref{MainThm} (resp. \thref{MainThm-supp}) and \thref{vanish-on-ker}, it is easy to see that if there exists a representing measure for $L$ with support contained in $\{\alpha\in X(A): \alpha\restriction_V \text{ is $q-$continuous}\}$ then $L$ vanishes on $\ker(p)$ and so on $\ker(q)$.
\end{enumerate}
\end{rem}

When $A$ is endowed with a Hilbertian seminorm $q$ and there exists $C>0$ such that $L(a^2)\leq C q(a)^2$ for all $a\in A$, we can characterize the representing measures for $L$ with support contained in $\sp(q)$ only through conditions on a dense subalgebra of $A$.

\begin{thm}\thlabel{MainThm-supp_2}
Let $A$ be an algebra, $q$ a Hilbertian seminorm on $A$ with $\sp(q)\neq\emptyset$, $B$ a subalgebra of $A$ which is dense in $(A, q)$ and $I:=\{\langle W\rangle: W \text{ finite\ dimen\-sional}$ $\text{subspace of }B\}.$ Let $L$ be a normalized $\sum A^2$--positive linear functional on $A$ for which there exists $C>0$ such that $L(a^2)\leq C q(a)^2$ for all $a\in A$.

There exists a representing Radon measure $\nu$ for $L$ with support contained in $\sp(q)$ if and only if there exists a Hilbertian seminorm $p$ on $B$ with $\tr(p/q)<\infty$ and for each $S\in I$  there exists a representing Radon measure $\nu_S$ for $L\!\restriction_S$ with support contained in $X(S)$ such that $\{\nu_S:S\in I\}$ is $p-$concentrated.

\end{thm}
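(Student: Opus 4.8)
The plan is to derive both implications from \thref{MainThm} applied to the algebra $B$ itself (with generating space $V=B$, seminorm $q\!\restriction_B$ and index set $I$), using the density of $B$ in $(A,q)$ to pass between $X(B)$ and $X(A)$. I first record the elementary reductions. Since $L$ is $\sum A^2$--positive, the Cauchy--Bunyakovsky--Schwarz inequality gives $|L(a)|=|L(a\cdot 1)|\le\sqrt{L(a^2)}\le\sqrt{C}\,q(a)$ for all $a\in A$, so $L$ is $q$--continuous; likewise $s_L\le\sqrt{C}\,q$, whence $B$ is dense in $(A,q)$ also in the seminorm $s_L$. In particular $L$, and more generally each bilinear expression $(a,a')\mapsto L(aa')$, is determined by its restriction to $B$ through a routine CBS estimate.

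For the ``only if'' direction, let $\nu$ be a representing Radon measure for $L$ supported in $\sp(q)$, and write $\pi_B\colon X(A)\to X(B)$, $\alpha\mapsto\alpha\!\restriction_B$, for the restriction map. Then $\mu:={\pi_B}_{\#}\nu$ is a representing Radon measure for $L\!\restriction_B$ whose support lies in $\{\beta\in X(B):\beta\text{ is }q\text{--continuous}\}$, since the restriction of a $q$--continuous character of $A$ is a $q$--continuous character of $B$. The ``only if'' part of \thref{MainThm} applied to $B$ then yields a Hilbertian seminorm $p$ on $B$ with $\tr(p/q)<\infty$ together with the $p$--concentration of $\{{\pi_{S,B}}_{\#}\mu:S\in I\}$; because ${\pi_{S,B}}_{\#}\mu={\pi_{S,A}}_{\#}\nu=:\nu_S$ for every $S\in I$, this is precisely the asserted family, representing $L\!\restriction_S$ with support in $X(S)$.

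For the ``if'' direction, the ``if'' part of \thref{MainThm} applied to $B$ produces a representing Radon measure $\mu$ for $L\!\restriction_B$ on $X(B)$ supported in the $q$--continuous characters of $B$. The map $b\mapsto\hat b$ from $(B,q)$ into $L^2(\mu)$ is $q$--continuous, as $\int_{X(B)}\hat b^2\,\dd\mu=L(b^2)\le Cq(b)^2$, so it extends to a $q$--continuous linear map $T\colon A\to L^2(\mu)$ with $\int T(a)\,\dd\mu=L(a)$ and $\langle T(a),T(a')\rangle_{L^2(\mu)}=L(aa')$ for all $a,a'\in A$, both identities descending from their counterparts on $B$ by density and CBS. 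The goal is then to transport $\mu$ to a Radon measure $\nu$ on $X(A)$ supported in $\sp(q)$. This transport is made rigorous by the measure--theoretic identity \eqref{eq-sigma-algebras}: on $\sp(q)$ every $\hat a$ is the pointwise limit of $\widehat{b_n}$ for $b_n\to a$ in $q$, so the Borel $\sigma$--algebra of $X(A)$ and the $\sigma$--algebra generated by $\{\hat b:b\in B\}$ coincide there, and $\pi_B$ is a Borel isomorphism of the $\sigma$--compact set $\sp(q)$ onto its image by the Lusin--Souslin theorem. Once $\nu$ is obtained as a genuine measure on $X(A)$, it represents $L$: for each $a\in A$ one has $\int\hat a\,\dd\nu=\int T(a)\,\dd\mu=L(a)$, using only first--power integrability (hence only the second--moment bound), and the matching of all higher moments is then automatic because $\nu$ lives on honest characters, where $\widehat{a^m}=\hat a^{\,m}$.

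The main obstacle is exactly the missing ingredient of the previous paragraph: showing that $\mu$ is carried by the \emph{restrictions of $q$--continuous characters of $A$}, equivalently that $T$ is multiplicative $\mu$--almost everywhere, $T(aa')=T(a)T(a')$. This is delicate because multiplication on $A$ is not $q$--continuous while the hypothesis controls only second moments, so the naive route---approximating $a,a'$ by $b_n,b'_n\in B$ and passing to the limit in $T(b_nb'_n)=\widehat{b_n}\,\widehat{b'_n}$---fails, as it would demand $s_L(b_nb'_n-aa')\to 0$ and hence fourth--moment control. I would instead reduce the almost everywhere multiplicativity to a family of integral identities tested against bounded functions of the $\hat b$'s (admissible by \eqref{eq-sigma-algebras} and a monotone--class argument), exploiting that $\mu$, being an honest representing measure on $X(B)$, already respects every algebraic relation among elements of $B$, and that $\{\hat b:b\in B\}$ lies in every $L^p(\mu)$; the relations reaching into $A\setminus B$ would then be accessed through $q$--density together with the uniform integrability furnished by $\tr(p/q)<\infty$. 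It is at this point that the global bound $L(a^2)\le Cq(a)^2$ on all of $A$---rather than merely on $B$---is indispensable.
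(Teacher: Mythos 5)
Your ``only if'' half is correct and is essentially the paper's own argument: push $\nu$ forward under restriction $X(A)\to X(B)$ and apply the necessity part of \thref{MainThm} to the algebra $B$ (generated by $V=B$). The genuine gap is in the ``if'' half, and you have located it yourself: after \thref{MainThm} produces the measure $\mu$ on $X(B)$ carried by the $q$--continuous characters of $B$, you never actually transport $\mu$ to a measure on $X(A)$. Everything else you set up --- the operator $T\colon A\to L^2(\mu)$, the identities $\int T(a)\,\dd\mu=L(a)$ and $\langle T(a),T(a')\rangle_{L^2(\mu)}=L(aa')$, the Lusin--Souslin remark --- is conditional on knowing that $\mu$--almost every character in the support is the restriction of a $q$--continuous character of $A$, and for that you offer only a plan (``I would instead reduce\ldots would then be accessed through\ldots''), not an argument. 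It is moreover doubtful that the plan can be executed as stated: $\tr(p/q)<\infty$ gives no control on products $aa'$ with $a,a'\notin B$, a monotone-class argument can only propagate identities that survive the limit, and the identities you need, $T(aa')=T(a)T(a')$, are exactly the ones that do not pass to the limit for lack of fourth-moment/multiplication control --- as you yourself observe. So the backward implication remains unproven.

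For comparison, the paper does not prove (or need) any almost-everywhere multiplicativity. It dispatches your obstacle at the level of points, before any measure appears: by density of $B$ in $(A,q)$, restriction is asserted to give a one-to-one correspondence between the $q$--continuous characters of $A$ and those of $B$, and by the Borel identity \eqref{eq-sigma-algebras} (proved in the Appendix) the two measurable structures on this common set $\sp(q)$ coincide; hence the measure obtained from \thref{MainThm} applied to $B$ is already a Radon measure on $\sp(q)\subseteq X(A)$. The analytic content of the paper's proof is then precisely the step you treated as routine: proving $L(a)=\int\hat{a}\,\dd\nu$ for $a\in A\setminus B$, done by taking $b_n\to a$ in $q$, using Fatou together with $L(b_n^2)\to L(a^2)$ (this is where the global bound $L(a^2)\leq Cq(a)^2$ on all of $A$ enters) to get $\hat{a}\in\mathcal{L}^2(\nu)$, and then a truncation/dominated-convergence argument to get $\int|\hat{a}-\hat{b}_n|\,\dd\nu\to 0$; your $L^2(\mu)$--extension $T$ would serve the same purpose there. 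In short, the step you flagged as the ``main obstacle'' is the one the paper settles by the deterministic extension correspondence (your concern that a $q$--continuous linear extension of a character need not be multiplicative is exactly what that assertion addresses), and since you neither establish that correspondence nor complete your alternative a.e.\ argument, the proposal does not prove the theorem.
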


\begin{proof}
By the density of $B$ in $(A, q)$, there is a one-to-one correspondence between the set of all $q-$continuous characters of $A$ and the set of all $q-$continuous characters of $B$, which will therefore both denote simply by $\sp(q)$. Moreover, let $\tau_{\sp(q)^A}$ (resp. $\tau_{\sp(q)^B}$) be the weakest topology on $\sp(q)$ which makes $\hat{a}:\sp(q)\to\RR, \alpha \mapsto \alpha(a)$ continuous for all $a \in A$ (resp. for all $a\in B$) and by $\mathcal{B}(\tau_{\sp(q)^A})$ (resp. $\mathcal{B}(\tau_{\sp(q)^B})$) the associated Borel $\sigma$-algebra. We demand to Appendix \ref{aux} the proof that
\begin{equation}\label{eq-sigma-algebras}
\mathcal{B}(\tau_{\sp(q)^A})= \mathcal{B}(\tau_{\sp(q)^B}).
\end{equation}
and so we will not distinguish between the measurable spaces $(\sp(q), \mathcal{B}(\tau_{\sp(q)^A}))$ and $(\sp(q), \mathcal{B}(\tau_{\sp(q)^B}))$, which will be both simply denoted by $(\sp(q), \mathcal{B}(\tau_{\sp(q)}))$.

Suppose that there exists a representing Radon measure $\nu$ for $L$ with support contained in $\sp(q)$. Then, applying \thref{MainThm} for $V=A=B$, we get that there exists a Hilbertian seminorm $p$ on $B$ with $\tr(p/q)<\infty$ and for each $S\in I$  there exists a representing Radon measure $\nu_S$ for $L\!\restriction_S$ with support contained in $X(S)$ such that $\{\nu_S:S\in I\}$ is $p-$concentrated.

Conversely, suppose there exists a Hilbertian seminorm $p$ on $B$ with $\tr(p/q)<\infty$ and for each $S\in I$ there exists a representing Radon measure $\nu_S$ for $L\!\restriction_S$ with support contained in $X(S)$ such that $\{\nu_S:S\in I\}$ is $p-$concentrated. Then, applying \thref{MainThm} for $V=A=B$ (see also \thref{rem-mainThm}-(2)), we obtain that there exists a representing measure $\nu$ for $L\restriction_B$ with support contained in $\sp(q)$. We aim to prove that $\nu$ is actually a representing measure for $L$, so it remains to show that
$L(a)=\int \hat{a}(\alpha) d\nu (\alpha), \ \forall a\in A\setminus B$.

Let $a \in A\setminus B$. By the density of $B$ in $(A, q)$, there exists a sequence $(b_n)_{n\in \NN}\subseteq B$ such that $q(b_n-a)\rightarrow 0$ as $n\to\infty$. Hence, for any $\alpha\in\sp(q)$ we get $\alpha(b_n) \rightarrow \alpha(a)$ as $n\to\infty$, i.e. $\lim_{n\to\infty}\hat{b_n}(\alpha)=\hat{a}(\alpha)$. Then, using Fatou's lemma, we obtain that

$$\int_{\sp(q)} \hat{a}(\alpha)^2 d\nu=\int_{\sp(q)} \lim_{n\to\infty}\hat{b_n}(\alpha)^2 d\nu\leq \liminf_{n\to\infty}\int_{\sp(q)} \hat{b_n}(\alpha)^2 d\nu=\liminf_{n\to\infty} L(b_n^2)=L(a^2),$$
where in the last equality we used the Cauchy--Bunyakovsky--Schwarz inequality and the existence of $C>0$ such that $L(a^2)\leq C q(a)^2$ for all $a\in A$. Hence, $\hat{a}\in \mathcal{L}^2(\sp(q), \B(\tau_{\sp(q)}))$ and so $\hat{a}\in \mathcal{L}^1(\sp(q),  \B(\tau_{\sp(q)}))$.

Then for any $M>0$ we have that
\begin{align}\label{passaggio}
\int_{\sp(q)}| \hat{a}(\alpha) - \hat{b_n}(\alpha)| d\nu(\alpha)
& \leq \int_{\sp(q)}\left| \hat{a}(\alpha)\ind_{\{\beta: |\hat{a}(\beta)|\leq M\}}(\alpha) - \hat{b}_n(\alpha)\ind_{\{\beta: |\hat{b}_n(\beta)|\leq M\}}(\alpha)\right| d\nu(\alpha) \nonumber\\
&\quad+\int_{\sp(q)}\left| \hat{a}(\alpha)\ind_{\{\beta: |\hat{a}(\beta)|\leq M\}}(\alpha) - \hat{a}(\alpha)\right| d\nu(\alpha) \nonumber\\
&\quad+\int_{\sp(q)}\left| \hat{b_n}(\alpha)\ind_{\{\beta: |\hat{b}_n(\beta)|\leq M\}}(\alpha) - \hat{b}_n(\alpha)\right| d\nu(\alpha)\nonumber\\
& =\int_{\sp(q)}\left| \hat{a}(\alpha)\ind_{\{\beta: |\hat{a}(\beta)|\leq M\}}(\alpha) - \hat{b}_n(\alpha)\ind_{\{\beta: |\hat{b}_n(\beta)|\leq M\}}(\alpha)\right| d\nu(\alpha) \nonumber\\
&\quad+\int_{\sp(q)}\left| \hat{a}(\alpha)\right|\ind_{\{\beta: |\hat{a}(\beta)|> M\}}(\alpha) d\nu(\alpha) \nonumber\\
&\quad+\int_{\sp(q)}\left| \hat{b}_n(\alpha)\right|\ind_{\{\beta: |\hat{b}_n(\beta)|> M\}}(\alpha) d\nu(\alpha)
\end{align}

Using that $\ind_{\{\beta: |\hat{b}_n(\beta)|> M\}}(\alpha)\leq \frac{1}{M}|\hat{b}_n(\alpha)|$ and that $\nu$ is a representing measure for $L\restriction_B$, we easily see that:
$$\int_{\sp(q)}\left| \hat{b}_n(\alpha)\right|\ind_{\{\beta: |\hat{b}_n(\beta)|> M\}}(\alpha) \leq\frac{1}{M}\int_{\sp(q)}\hat{b}_n(\alpha)^2 d\nu(\alpha)=\frac{L(b_n^2)}{M}\to \frac{L(a^2)}{M}, \text{ as } n\to\infty.$$

Therefore, passing to the limit for $n\to\infty$ in \eqref{passaggio}, we get that for any $M>0$:
\begin{equation}\label{pass-intermedio}
\lim_{n\to\infty}\int_{\sp(q)}| \hat{a}(\alpha) - \hat{b_n}(\alpha)| d\nu(\alpha)
\leq \int_{\sp(q)}\left| \hat{a}(\alpha)\right|\ind_{\{\beta: |\hat{a}(\beta)|> M\}}(\alpha) d\nu(\alpha)+\frac{L(a^2)}{M}
\end{equation}

Since $\hat{a}\in \mathcal{L}^1(\sp(q),  \B(\tau_{\sp(q)}))$ and $\left| \hat{a}(\alpha)\right|\ind_{\{\beta: |\hat{a}(\beta)|> M\}}(\alpha)  \leq \left| \hat{a}(\alpha)\right|$ for all $M>0$, we can apply the dominated converge theorem, which ensures that:
$$\lim_{M\to\infty}\int_{\sp(q)}\left| \hat{a}(\alpha)\right|\ind_{\{\beta: |\hat{a}(\beta)|> M\}}(\alpha) d\nu(\alpha)=\int_{\sp(q)}\lim_{M\to\infty}\left| \hat{a}(\alpha)\right|\ind_{\{\beta: |\hat{a}(\beta)|> M\}}(\alpha) d\nu(\alpha)=0$$

Hence, passing to the limit for $M\to\infty$ in \eqref{pass-intermedio}, we obtain that:
 $$\lim_{n\to\infty}\int_{\sp(q)}| \hat{a}(\alpha) - \hat{b_n}(\alpha)| d\nu(\alpha) =0$$
 and so
 $$L(a)=\lim_{n\to\infty}L(b_n)=\lim_{n\to\infty}\int_{\sp(q)}\hat{b_n}(\alpha)d\nu(\alpha) =\int_{\sp(q)} \hat{a}(\alpha) d\nu(\alpha).$$

\end{proof}

With a similar proof, it is possible to show the following more general version of \thref{MainThm-supp_2}.
\begin{thm}\thlabel{MainThm-dense-supp}
Let $A$ be an algebra, $K\subseteq X(A)$ closed, $q$ a Hilbertian seminorm on $A$ with $\sp(q)\cap K\neq\emptyset$, $B$ a subalgebra of $A$ which is dense in $(A, q)$ and $I:=\{\langle W\rangle: W \text{ finite\ dimen\-sional}$ $\text{subspace of }B\}.$ Let $L$ be a normalized $\sum A^2$--positive linear functional on $A$ for which there exists $C>0$ such that $L(a^2)\leq C q(a)^2$ for all $a\in A$ and $Q$ a quadratic module such that $K=K_Q$.

There exists a representing Radon measure $\nu$ for $L$ with support contained in $\sp(q)\cap K$ if and only if there exists a Hilbertian seminorm $p$ on $B$ with $\tr(p/q)<\infty$ and for each $S\in I$  there exists a representing Radon measure $\nu_S$ for $L\!\restriction_S$ with support contained in $K_{Q\cap S}$ such that $\{\nu_S:S\in I\}$ is $p-$concentrated.
\end{thm}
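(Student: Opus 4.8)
The plan is to follow the proof of \thref{MainThm-supp_2} almost verbatim, systematically replacing $\sp(q)$ by $\sp(q)\cap K$, the space $X(S)$ by the non-negativity set $K_{Q\cap S}$, and each invocation of \thref{MainThm} by \thref{MainThm-supp}. As a preliminary step I would upgrade the measure-theoretic identity \eqref{eq-sigma-algebras} to the smaller set $\sp(q)\cap K$: since $K=K_Q$ is closed in $\tau_{X(A)}$ it lies in $\mathcal{B}(\tau_{\sp(q)^A})=\mathcal{B}(\tau_{\sp(q)^B})$, so restricting \eqref{eq-sigma-algebras} to the measurable subset $\sp(q)\cap K$ shows that the Borel structures induced on $\sp(q)\cap K$ by $A$ and by $B$ coincide; I denote the resulting measurable space by $(\sp(q)\cap K,\mathcal{B}(\tau_{\sp(q)\cap K}))$.

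For the forward implication, suppose a representing Radon measure $\nu$ for $L$ with support in $\sp(q)\cap K$ exists. Applying \thref{MainThm-supp} with $V=A=B$ and the quadratic module $Q$ produces a Hilbertian seminorm $p$ on $B$ with $\tr(p/q)<\infty$ and, for each $S\in I$, the $p$-concentrated family $\nu_S:={\pi_S}_{\#}\nu$ representing $L\restriction_S$. Here the localization is immediate: if $\alpha\in K_Q$ then $\alpha\restriction_S(c)=\alpha(c)\geq 0$ for every $c\in Q\cap S$, hence $\pi_S(\supp\nu)\subseteq K_{Q\cap S}$.

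For the converse I would apply \thref{MainThm-supp} to the algebra $B$ with $V=B$ and quadratic module $Q\cap B$, noting that $(Q\cap B)\cap S=Q\cap S$ for every $S\in I$ because $S\subseteq B$. This yields a representing Radon measure for $L\restriction_B$ on $X(B)$ with support in $\sp(q)\cap K_{Q\cap B}$; transporting it through the identification above gives a measure $\nu$ on $\sp(q)\cap K$. The remainder is identical to \thref{MainThm-supp_2}: for $a\in A\setminus B$ pick $(b_n)\subseteq B$ with $q(b_n-a)\to 0$, so $\hat{b}_n\to\hat{a}$ pointwise on $\sp(q)\cap K$; Fatou's lemma together with the bound $L(a^2)\leq Cq(a)^2$ gives $\hat{a}\in\mathcal{L}^2\subseteq\mathcal{L}^1$; and the uniform-integrability splitting \eqref{passaggio}--\eqref{pass-intermedio} forces $\int|\hat{a}-\hat{b}_n|\,\dd\nu\to 0$, whence $L(a)=\lim_n L(b_n)=\lim_n\int\hat{b}_n\,\dd\nu=\int\hat{a}\,\dd\nu$, so $\nu$ represents $L$ on all of $A$.

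The step I expect to cost the most work is checking that this transported measure is actually supported in $K_Q$ and not merely in $K_{Q\cap B}$, i.e.\ that the bijection $\alpha\mapsto\alpha\restriction_B$ between the $q$-continuous characters of $A$ and of $B$ carries $\sp(q)\cap K_{Q\cap B}$ into $\sp(q)\cap K_Q$. Given $\beta\in\sp(q)\cap K_{Q\cap B}$ with unique $q$-continuous extension $\tilde\beta$ to $A$ and any $a\in Q$, I would approximate $a$ in the $q$-seminorm by elements $c_n\in Q\cap B$ and use $\tilde\beta(a)=\lim_n\tilde\beta(c_n)=\lim_n\beta(c_n)\geq 0$ to conclude $\tilde\beta\in K_Q$. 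The whole localization therefore rests on the $q$-density of $Q\cap B$ in $Q$, which does \emph{not} follow from the $q$-density of $B$ in $A$ alone (multiplication need not be $q$-continuous, so squares of $q$-approximants of $a$ need not $q$-approximate $a^2$); this is the genuinely new point beyond \thref{MainThm-supp_2}, and I would isolate it either as a separate lemma or as an explicit standing hypothesis on the pair $(Q,B)$.
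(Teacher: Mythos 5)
Your proposal follows precisely the route the paper intends: the paper gives no separate proof of \thref{MainThm-dense-supp}, stating only that it follows ``with a similar proof'' from \thref{MainThm-supp_2} (with \thref{MainThm-supp} replacing \thref{MainThm}), and your substitutions --- working on $B$ with the quadratic module $Q\cap B$, noting $(Q\cap B)\cap S=Q\cap S$ for $S\in I$, upgrading \eqref{eq-sigma-algebras} to $\sp(q)\cap K$, and rerunning the Fatou/uniform-integrability argument to extend the representation from $L\restriction_B$ to $L$ --- are exactly the right way to execute it. The forward implication as you present it is also correct.

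However, the concern you isolate at the end is not merely ``the step that costs the most work'': it is a genuine gap, and in fact without an extra hypothesis linking $Q$ and $B$ the ``if'' direction of the statement is false, so no argument can close it. Take $A=\RR[x]$ (so $X(A)\cong\RR$), $q(f)^2:=\int_0^1 f(t)^2\,\dd t+f(2)^2+f(3)^2$, $B:=\{f\in A:\ f(2)=f(5)\}$, $K:=\{3,5\}$, $Q:=\Pos(K)$ (so $K_Q=K$), and $L:=\tfrac12(\mathrm{ev}_2+\mathrm{ev}_3)$. Then $B$ is a $q$-dense subalgebra: for $h\in A$ the polynomials $h+(h(5)-h(2))u_n\in B$, with $u_n(x):=-(x-2)(x-3)x^n/(6\cdot 5^n)$, converge to $h$ in $q$, since $u_n(2)=u_n(3)=0$, $u_n(5)=-1$ and $q(u_n)\le 5^{-n}$ (this also shows $\mathrm{ev}_5\notin\sp(q)$); moreover $L$ is normalized, $\sum A^2$-positive, $L(f^2)\le q(f)^2$, and $\sp(q)\cap K=\{\mathrm{ev}_3\}\neq\emptyset$. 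The right-hand side of the equivalence holds: $p:=s_L\restriction_B$ is Hilbertian with $\tr(p/q)\le 1$ (Bessel's inequality applied to $\mathrm{ev}_2,\mathrm{ev}_3$ in the completion of $(A,q)$), the measures $\nu_S:=\tfrac12\bigl(\delta_{\mathrm{ev}_2\restriction_S}+\delta_{\mathrm{ev}_3\restriction_S}\bigr)$ represent $L\restriction_S$, are $p$-concentrated (Chebyshev, as in Lemma~\ref{lem-suff-conc}, with $C=1$), and are supported in $K_{Q\cap S}$, because every $c\in Q\cap S\subseteq Q\cap B$ satisfies $c(3)\ge 0$ and $c(2)=c(5)\ge 0$. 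Yet the left-hand side fails: a Radon measure supported in $\sp(q)\cap K\subseteq\{\mathrm{ev}_3,\mathrm{ev}_5\}$ representing $L$ would give $L=c_3\,\mathrm{ev}_3+c_5\,\mathrm{ev}_5$, contradicting $L\bigl((x-3)(x-5)\bigr)=\tfrac32\neq 0$. This is exactly the mechanism you feared: the transported measure (here $\tfrac12(\delta_{\mathrm{ev}_2}+\delta_{\mathrm{ev}_3})$) is supported in $\{\alpha\in\sp(q):\alpha\ge 0\text{ on }Q\cap B\}$ but not in $K_Q$. So your proposed remedy is necessary, not optional: one must add (and, e.g.\ in \thref{BKS-thm}, verify) a hypothesis guaranteeing that every $\alpha\in\sp(q)$ which is non-negative on $Q\cap B$ lies in $K_Q$ --- for instance $q$-density of $Q\cap B$ in $Q$ as you suggest, or the purely algebraic condition that $Q$ is generated, as a quadratic module of $A$, by elements of $B$ (then $\alpha(\sigma_0+\sum_i g_i\sigma_i)=\alpha(\sigma_0)+\sum_i\alpha(g_i)\alpha(\sigma_i)\ge 0$ for any character $\alpha$ with $\alpha(g_i)\ge0$, no continuity needed). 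Note that the needed condition holds trivially when $Q=\sum A^2$, which is why \thref{MainThm-supp_2} itself is unaffected.
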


Given a normalized $\sum A^2$--positive linear functional $L$, the map $(a,b)\mapsto L(ab)$ defines a symmetric positive semidefinite bilinear form and so the following is a natural Hilbertian seminorm on $A$
\begin{equation}\label{seminorm-pL}
s_L(a):=\sqrt{L(a^2)}\,\text{ for all}\ a\in A.
\end{equation}
Then, combining Lemma \ref{lem-suff-conc} with our main results \thref{MainThm-supp} and \thref{MainThm-dense-supp}, we easily obtain the following two results.

\begin{cor}\thlabel{theorem-main-s_L}
Let $A$ be an algebra generated by a linear subspace $V\subseteq A$, $K\subseteq X(A)$ closed and $J:=\{\langle W\rangle: W \text{ finite\ dimensional\ subspace of }V\}.$ Let $L$ be a normalized $\sum A^2$--positive linear functional $L$ on $A$ and $Q$ a quadratic module such that $K=K_Q$.

If there exists a Hilbertian seminorm $q$ on $V$ such that $\tr(s_L\restriction_V/q)<\infty$ and $\{\alpha\in K:\alpha\!\restriction_V\text{is }q\text{--continuous}\}\neq\emptyset$ and for each $S\in J$ there exists a representing Radon measure $\nu_S$ for $L\!\restriction_S$ with support contained in $K_{Q\cap S}$, then there exists a representing measure for $L$ with support contained in $\{\alpha\in K:\alpha\!\restriction_V\text{is }q\text{--continuous}\}$.
\end{cor}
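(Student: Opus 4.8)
The plan is to invoke the sufficiency direction of \thref{MainThm-supp} with the particular choice $p := s_L\restriction_V$, where $s_L$ is the Hilbertian seminorm on $A$ introduced in \eqref{seminorm-pL}. First I would note that, since $L$ is $\sum A^2$--positive, the form $(a,b)\mapsto L(ab)$ is symmetric positive semidefinite, so $s_L$ is a genuine Hilbertian seminorm on $A$ and hence its restriction $p = s_L\restriction_V$ is a Hilbertian seminorm on $V$. With this choice the hypothesis $\tr(s_L\restriction_V/q)<\infty$ reads \emph{verbatim} as $\tr(p/q)<\infty$, so the trace requirement of \thref{MainThm-supp} is satisfied for free.

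Next I would collect the ingredients that \thref{MainThm-supp} demands. By assumption, for each $S\in J$ there is a representing Radon measure $\nu_S$ for $L\restriction_S$ with support contained in $K_{Q\cap S}$, so the only remaining point is the $p$--concentration of the family $\{\nu_S:S\in J\}$. Here the choice $p=s_L\restriction_V$ is exactly what makes things work: for every $a\in V$ one has $p(a)^2=s_L(a)^2=L(a^2)$, so the bound $L(a^2)\leq C\,p(a)^2$ of \eqref{con-L} holds with $C=1$ (indeed with equality). Thus the hypotheses of Lemma~\ref{lem-suff-conc} are met, and that lemma immediately yields that $\{\nu_S:S\in J\}$ is $p$--concentrated.

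With $p$, the measures $\nu_S$, the finite trace, and the concentration all in hand, and with the nonemptiness of $\{\alpha\in K:\alpha\restriction_V\ \text{is}\ q\text{--continuous}\}$ supplied by hypothesis, the sufficiency direction of \thref{MainThm-supp} applies directly and produces a representing Radon measure $\nu$ for $L$ whose support is contained in $\{\alpha\in K:\alpha\restriction_V\ \text{is}\ q\text{--continuous}\}$, which is the assertion. I do not expect any genuine obstacle: the whole content of the corollary is the observation that the canonical seminorm $s_L$ renders the concentration condition automatic through Lemma~\ref{lem-suff-conc}, so that the abstract criterion \thref{MainThm-supp} becomes applicable without having to construct an auxiliary seminorm $p$ by hand. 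The two facts needing a line of justification—that $s_L$ restricts to a Hilbertian seminorm on $V$, and that $\sum A^2$--positivity guarantees this—are both immediate.
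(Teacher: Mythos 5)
Your proposal is correct and follows essentially the same route as the paper: the paper's proof likewise takes $p=s_L$ (restricted to $V$), invokes Lemma~\ref{lem-suff-conc} with $C=1$ to obtain the $s_L$--concentration of $\{\nu_S:S\in J\}$, and then applies \thref{MainThm-supp} together with the trace hypothesis $\tr(s_L\restriction_V/q)<\infty$ to produce the representing measure supported in $\{\alpha\in K:\alpha\restriction_V\ \text{is}\ q\text{--continuous}\}$.
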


\proof

Since $L(a^2)=s_L(a)^2$ for all $a\in V$, \eqref{con-L} holds for $p=s_L$ and $C=1$. Hence, Lemma \ref{lem-suff-conc} ensures that $\{\nu_S:S\in I\}$ is $s_L-$concentrated.
This together with the assumption $\tr(s_L\restriction_V/q)<\infty$ allows us to apply \thref{MainThm-supp} for $p=s_L$, ensuring that there exists a representing Radon measure $\nu$ for~$L$ with support contained in $\{\alpha\in K:\alpha\!\restriction_V\text{is }q\text{--continuous}\}$.
\endproof

\begin{cor}\thlabel{cor-dense-mainthm}
Let $(A, \tau)$ be a locally convex topological algebra, $B$ a sub-algebra of $A$ which is dense in $(A, \tau)$ and $I:=\{\langle W\rangle: W \text{ finite\ dimensional\ subspace of }B\}.$ Let $L$ be a normalized $\sum A^2$--positive linear functional on $A$ and $Q$ a quadratic module such that $K=K_Q$.

If there exists a $\tau-$continuous Hilbertian seminorm $q$ on $A$ with $\tr(s_L/q)<\infty$ and $\sp(q)\cap K\neq\emptyset$ and if for each $S\in I$ there exists a representing Radon measure $\nu_S$ for $L\!\restriction_S$ with support contained in $K_{Q\cap S}$, then there exists a representing measure for $L$ with support contained in $\sp(q)\cap K$.
\end{cor}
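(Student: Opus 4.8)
The plan is to treat this corollary as the dense-subalgebra analogue of \thref{theorem-main-s_L}: whereas there the argument was funnelled through \thref{MainThm-supp}, here I would run the identical bookkeeping through \thref{MainThm-dense-supp}. Concretely, I would take the Hilbertian seminorm on $B$ required by \thref{MainThm-dense-supp} to be $p:=s_L\!\restriction_B$, where $s_L$ is the seminorm from \eqref{seminorm-pL} (which is a genuine Hilbertian seminorm on $A$ precisely because $L$ is $\sum A^2$-positive), and then verify, one at a time, the three hypotheses of that theorem: the $s_L\!\restriction_B$-concentration of $\{\nu_S:S\in I\}$, the finiteness of $\tr(s_L\!\restriction_B/q\!\restriction_B)$, and the density of $B$ in $(A,q)$.

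For the concentration I would note that $L(a^2)=s_L(a)^2$ holds for every $a\in A$, so in particular condition \eqref{con-L} is satisfied on $B$ with $p=s_L\!\restriction_B$ and $C=1$. Since $L$ is $\sum A^2$-positive and $\sum B^2\subseteq\sum A^2$, the restriction $L\!\restriction_B$ is $\sum B^2$-positive, so Lemma \ref{lem-suff-conc}, applied with $B$ regarded as the algebra generated by the vector space $B$ (for which the index set is exactly $I$), yields that $\{\nu_S:S\in I\}$ is $s_L\!\restriction_B$-concentrated. The trace bound is then immediate from \thref{prop::trace}-(iii) applied with $W=B$, which gives $\tr(s_L\!\restriction_B/q\!\restriction_B)\leq\tr(s_L/q)<\infty$.

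The one genuinely new point, and the step I would be most careful about, is the passage from density in $(A,\tau)$ to density in $(A,q)$, since \thref{MainThm-dense-supp} demands the latter. This is exactly where the $\tau$-continuity of $q$ enters, and indeed it is the only hypothesis of the corollary not already present in \thref{MainThm-dense-supp}: $\tau$-continuity means $q(b-a)\to 0$ whenever $b\to a$ in $\tau$, so the $\tau$-closure of $B$ is contained in its $q$-closure; as $B$ is $\tau$-dense in $A$, it is therefore $q$-dense in $A$. With $\sp(q)\cap K\neq\emptyset$ and all hypotheses of \thref{MainThm-dense-supp} verified for $p=s_L\!\restriction_B$, I would conclude the existence of a representing Radon measure $\nu$ for $L$ with support contained in $\sp(q)\cap K$.
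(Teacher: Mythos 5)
Your proposal is correct and follows essentially the same route as the paper's own proof, which likewise feeds \thref{MainThm-dense-supp} with $p=s_L$ (using Lemma \ref{lem-suff-conc} with $C=1$ for the concentration and deducing the $q$-density of $B$ from the $\tau$-continuity of $q$ together with the $\tau$-density of $B$); your additional care in restricting to $B$ and invoking \thref{prop::trace}-(iii) only makes explicit what the paper leaves implicit. The one hypothesis of \thref{MainThm-dense-supp} that neither you nor the paper verifies in so many words, namely the existence of $C>0$ with $L(a^2)\leq Cq(a)^2$ for all $a\in A$, is immediate from $\tr(s_L/q)<\infty$ via \thref{prop::trace}-(i), so this is not a gap.
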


\proof
The $\tau$--continuity of $q$ and the density of $B$ in $(A, \tau)$ provide that  $B$ is dense in $(A, q)$. Moreover, since $L(a^2)=s_L(a)^2$ for all $a\in A$, \eqref{con-L} holds for $p=s_L$, $C=1$ and $V=A$, so Lemma \ref{lem-suff-conc} ensures that $\{\nu_S:S\in I\}$ is $s_L-$concentrated. Then, by \thref{MainThm-dense-supp}, there exists a representing measure for $L$ with support contained in $\sp(q)\cap K$.
\endproof

\begin{rem}\label{rem-sep} In \thref{theorem-main-s_L} and \thref{cor-dense-mainthm} we could actually replace $A$ with $A/\ker(s_L)$, because it is readily seen from the Cauchy-Schwartz inequality that $L$ vanishes on $\ker(s_L)$. Moreover, we have that $V/\ker(s_L\restriction_V)=V/(\ker(s_L)\cap V)=V/\ker(s_L)$ and, whenever
$\tr(s_L\restriction_V/q)<\infty$ for some Hilbertian norm $q$, the space $V/\ker(s_L)$ endowed with the quotient seminorm induced by $s_L$ (and also denoted by $s_L$ with a slight abuse of notation) is separable. Hence, whenever these techniques work, $(V, s_L\restriction_V)$ is essentially a separable space.
\end{rem}

Let us now exploit \thref{theorem-main-s_L} to obtain more concrete sufficient conditions for the existence of a representing measure for $L$ in presence of a fixed Hilbertian seminorm $q$ on $A$.

\begin{cor}\thlabel{thm-um-generating-subspace}
Let $A$ be an algebra generated by a linear subspace $V\subseteq A$,  $L$ a normalized linear functional on $A$, $Q$ a quadratic module in $A$ and $q$ a Hilbertian seminorm on $A$ with $\{\alpha\in K_Q:\alpha\!\restriction_V\text{ is }q\text{--continuous}\}\neq\emptyset$. If
\begin{enumerate}[label = (\alph*)]
\item $L(Q)\subseteq [0,\infty)$,
\item for each $v\in V$, $\sum\limits_{n=1}^\infty \frac{1}{\sqrt[2n]{L(v^{2n})}}=\infty$,
\item $\mathrm{tr}(s_L\!\restriction_V/q)<\infty$, where $s_L(a):=\sqrt{L(a^2)}$ for all $a\in A$,
\end{enumerate}
then there exists a unique representing Radon measure $\nu$ for $L$ with support contained in $\{\alpha\in K_Q:\alpha\!\restriction_V\text{ is }q\text{--continuous}\}$.
\end{cor}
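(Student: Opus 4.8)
The plan is to verify the hypotheses of \thref{theorem-main-s_L} applied with $K:=K_Q$ and with the Hilbertian seminorm $p:=s_L$, using $q$ restricted to $V$. Indeed, condition (c) is exactly the trace hypothesis $\tr(s_L\!\restriction_V/q)<\infty$ required there, and the nonemptiness of $\{\alpha\in K_Q:\alpha\!\restriction_V\text{ is }q\text{--continuous}\}$ is assumed. Hence, once we know that $L$ is $\sum A^2$--positive and that for each $S\in J$ the restriction $L\!\restriction_S$ admits a (unique) representing Radon measure $\nu_S$ with support contained in $K_{Q\cap S}$, \thref{theorem-main-s_L} will directly yield a representing measure $\nu$ for $L$ with support contained in $\{\alpha\in K_Q:\alpha\!\restriction_V\text{ is }q\text{--continuous}\}$.

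First I would deduce $\sum A^2$--positivity from (a). Since $Q$ is a quadratic module, $1\in Q$ and $A^2Q\subseteq Q$, so for every $a\in A$ we have $a^2=a^2\cdot 1\in A^2Q\subseteq Q$; as $Q+Q\subseteq Q$ this gives $\sum A^2\subseteq Q$, whence $L(\sum A^2)\subseteq L(Q)\subseteq[0,\infty)$ by (a). In particular $L\!\restriction_S$ is $(Q\cap S)$--positive for every subalgebra $S$, so the $s_L$--concentration of the family $\{\nu_S\}$ will be automatic (this is the mechanism already built into \thref{theorem-main-s_L} via Lemma \ref{lem-suff-conc}).

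The core of the argument is the finite-dimensional step, which I regard as the main obstacle. Fix $S=\langle W\rangle\in J$ and choose a basis $v_1,\dots,v_d$ of the finite dimensional space $W$; then $S$ is the finitely generated subalgebra generated by $v_1,\dots,v_d$, and the evaluation map $\alpha\mapsto(\alpha(v_1),\dots,\alpha(v_d))$ identifies $X(S)$ with a closed subset of $\RR^d$. By the previous step $L\!\restriction_S$ is $(Q\cap S)$--positive, and applying (b) to $v_1,\dots,v_d\in V$ provides the multivariate Carleman condition $\sum_{n}L(v_i^{2n})^{-1/2n}=\infty$ in each coordinate direction. I would then invoke the classical finite-dimensional moment theorem (see \cite{Smu17}): a $(Q\cap S)$--positive functional on a finitely generated algebra satisfying the Carleman condition in each generating direction is determinate and its unique representing measure is supported on $K_{Q\cap S}$. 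This furnishes the required $\nu_S$ together with its uniqueness. The delicate point is precisely the support localization: the Carleman condition already yields determinacy and a representing measure on $\RR^d$, and it is determinacy that allows one to upgrade $(Q\cap S)$--positivity into the statement that the (unique) measure charges only $K_{Q\cap S}$.

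Finally I would assemble the pieces. With the above, all hypotheses of \thref{theorem-main-s_L} hold for $K=K_Q$, so there exists a representing Radon measure $\nu$ for $L$ with support contained in $\{\alpha\in K_Q:\alpha\!\restriction_V\text{ is }q\text{--continuous}\}$. For uniqueness, I recall that \thref{theorem-main-s_L} is obtained by applying \thref{MainThm-supp} with $p=s_L$; since each $\nu_S$ is the \emph{unique} representing measure for $L\!\restriction_S$ with support in $K_{Q\cap S}$, \thref{rem-mainThm}-(1) transfers this uniqueness to $\nu$, completing the proof.
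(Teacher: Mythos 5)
Your proposal is correct and takes essentially the same route as the paper: the paper handles the finite-dimensional step by citing \cite[Theorem~3.17-(i)]{InKuKuMi22}, which is precisely the Carleman-plus-positivity determinacy theorem for finitely generated algebras that you invoke, and then applies \thref{theorem-main-s_L}, with uniqueness inherited from the uniqueness of the $\nu_S$'s exactly as you argue via \thref{rem-mainThm}-(1). The only differences are presentational: you unpack the content of the cited finite-dimensional theorem (embedding $X(S)$ in $\RR^d$, Carleman in each generator) and spell out the inclusion $\sum A^2\subseteq Q$ needed to apply \thref{theorem-main-s_L}, both of which the paper leaves implicit.
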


\begin{proof}
Let $J:=\{\langle W\rangle: W \text{ finite\ dimensional\ subspace of }V\}.$ By \cite[Theorem~3.17-(i)]{InKuKuMi22}, the assumptions (a) and (b) guarantee that for each $S\in J$ there exists a unique representing Radon measure $\nu_S$ for $L\!\restriction_S$ with support contained in $K_{Q\cap S}$.
This together with the assumption (c) allows us to apply \thref{theorem-main-s_L}, ensuring that there exists unique representing Radon measure $\nu$ for $L$ with support contained in $\{\alpha\in K_Q:\alpha\!\restriction_V\text{ is }q\text{--continuous}\}$.\end{proof}

\begin{rem}\thlabel{rem-Haviland}
\thref{thm-um-generating-subspace} still holds if we replace the assumptions (a) and (b) with the assumption (a') $L(Pos(K_Q))\subseteq [0,+\infty)$ and in the proof we use \cite[Theorem~3.14]{InKuKuMi22} instead of  \cite[Theorem~3.17]{InKuKuMi22}. However, under this replacement, the uniqueness is not anymore ensured.
\end{rem}

The following lemma provides an explicit construction of a Hilbertian seminorm $q$ as required in \thref{theorem-main-s_L} and so in \thref{thm-um-generating-subspace}.

\begin{lem}
\thlabel{lemma::a}
Let $A$ be an algebra generated by a linear subspace $V\subseteq A$ and $p$ a Hilbertian seminorm on $V$ such that there exists a complete $p-$orthonormal system $\{e_n:n\in\NN\}$ in $V$. Choose $(\lambda_n)_{n\in\NN}\subseteq\RR_{>0}$ such that  $\sum_{n=1}^\infty \lambda_n^2 < \infty$.
Then
\[
q(v):=\sqrt{\sum\limits_{n=1}^\infty\lambda_n^{-2}\langle v, e_n\rangle_p^2}, \quad \forall v\in V
\]
 defines a Hilbertian seminorm on $U:=\left\{ v \in V \, : \, \sum\limits_{n=1}^\infty\lambda_n^{-2}\langle v, e_n\rangle_p^2 < \infty\right\}$ such that $\mathrm{tr}(p\restriction_U/q)<\infty$ and $U$ is dense in $(V, p)$.
\end{lem}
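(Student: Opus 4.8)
The plan is to recognise $q$ as the Hilbertian seminorm obtained by reweighting the $p$-coordinates of a vector, and then to exhibit $\{\lambda_n e_n\}_{n\in\NN}$ as a complete $q$-orthonormal system in $U$ along which the trace can be read off directly. First I would check that $U$ is a linear subspace of $V$ and that $q$ is a genuine (finite-valued) Hilbertian seminorm on it. The natural inducing form is
\[
\langle v, w\rangle_q := \sum_{n=1}^\infty \lambda_n^{-2}\langle v, e_n\rangle_p\,\langle w, e_n\rangle_p ,
\]
which converges absolutely for $v,w\in U$ by the Cauchy--Schwarz inequality for $\ell^2$-series, while $U$ is closed under addition because $(a+b)^2\le 2a^2+2b^2$ applied coordinatewise keeps the defining series finite. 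Symmetry, bilinearity and positive semidefiniteness are immediate, so $q=\sqrt{\langle\cdot,\cdot\rangle_q}$ is a Hilbertian seminorm on $U$.

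Next I would record the two facts that drive the estimate. Since $\{e_n\}$ is a complete $p$-orthonormal system, Parseval's identity $p(v)^2=\sum_n\langle v,e_n\rangle_p^2$ holds for every $v\in V$; and since $\sum_n\lambda_n^2<\infty$ forces $M:=\sup_n\lambda_n<\infty$, comparing the two series gives $q(v)^2\ge M^{-2}p(v)^2$, that is $p\!\restriction_U\le M\,q$ on $U$. This comparison is precisely the hypothesis needed to invoke the trace characterization \eqref{char-trace}.

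The heart of the argument is the trace computation. From $p$-orthonormality one gets $e_n\in U$ and $\langle e_n,e_m\rangle_q=\lambda_n^{-2}\delta_{nm}$, so $\{\lambda_n e_n\}$ is $q$-orthonormal with $p(\lambda_n e_n)^2=\lambda_n^2$. The key point is that $\{\lambda_n e_n\}$ is also complete in $(U,q)$: writing $v_N:=\sum_{n\le N}\langle v,e_n\rangle_p\,e_n$ for $v\in U$, a short computation gives $q(v-v_N)^2=\sum_{m>N}\lambda_m^{-2}\langle v,e_m\rangle_p^2$, which is a tail of the convergent series defining $U$ and hence tends to $0$. Feeding this complete $q$-orthonormal system into \eqref{char-trace} yields $\tr(p\!\restriction_U/q)=\sum_n p(\lambda_n e_n)^2=\sum_n\lambda_n^2<\infty$. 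Density is then immediate: each $e_n$ lies in $U$, so $\mathrm{span}\{e_n\}\subseteq U\subseteq V$, and since $\{e_n\}$ is total in $(V,p)$ we obtain $V=\overline{\mathrm{span}\{e_n\}}^{\,p}\subseteq\overline{U}^{\,p}$, i.e. $U$ is $p$-dense in $V$.

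I expect the only genuine obstacle to be the completeness of $\{\lambda_n e_n\}$ in $(U,q)$ in the third step, as this is exactly where the defining condition of $U$ enters, through the vanishing of the series tail $\sum_{m>N}\lambda_m^{-2}\langle v,e_m\rangle_p^2$; everything else is routine Hilbert-space bookkeeping.
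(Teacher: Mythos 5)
Your proof is correct and follows essentially the same route as the paper's: the same inducing bilinear form, the same comparison $p\restriction_U\leq(\sup_n\lambda_n)\,q$, the identification of $\{\lambda_n e_n\}_{n\in\NN}$ as a complete $q$--orthonormal system in $U$, and the trace characterization \eqref{char-trace} to conclude $\tr(p\restriction_U/q)=\sum_{n=1}^\infty\lambda_n^2<\infty$. The only (inessential) difference is that you verify completeness of $\{\lambda_n e_n\}_{n\in\NN}$ directly through the tail estimate $q(v-v_N)^2=\sum_{m>N}\lambda_m^{-2}\langle v,e_m\rangle_p^2\to 0$, whereas the paper checks Parseval's equality and invokes its equivalence with totality from Bourbaki.
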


\begin{proof}
For each $a,b\in U$, let us define
$
\langle a,b\rangle_q:=\sum\limits_{n=1}^\infty\lambda_n^{-2}\langle a, e_n\rangle_p\langle b, e_n\rangle_p
$ (note that the Cauchy-Schwartz inequality provides that $\langle a,b\rangle_q<\infty$, since $\langle v,v\rangle_q<\infty$ for all $v\in U$ by the definition of $U$). Then $\langle \cdot,\cdot\rangle_q$ is a symmetric positive semidefinite bilinear form on $U\times U$ and thus, $q(v)=\sqrt{\langle v,v\rangle_q}$ for all $v\in U$ defines a Hilbertian seminorm on $U$.

As $\{e_n:n\in\NN\}$ is a complete $p-$orthonormal system in $V$, we have that Parseval's equality holds and so
$$\forall\ v\in U, \quad p(v)^2=\sum_{n=1}^\infty\langle v, e_n\rangle_{p}^2=\sum_{n=1}^\infty \lambda_n^{2}\lambda_n^{-2}\langle v, e_n\rangle_p^2\leq\left(\sup_{n\in\NN} \lambda_n^2 \right) q(v)^2,$$
i.e. $\forall v \in U$, $p(v)\leq C q(v)$ where $C:=\sup_{n\in\NN} \lambda_n^2$ is finite because of the assumption $\sum_{n=1}^\infty \lambda_n^2 < \infty$.


Moreover, since for all $i,j\in\NN$ we have
$$
\langle \lambda_i e_i,\lambda_j e_j\rangle_q=\sum_{n=1}^\infty\lambda_n^{-2}\langle\lambda_i e_i, e_n\rangle_{p}\langle\lambda_j e_j, e_n\rangle_{p}= \sum_{n=1}^\infty\lambda_n^{-2}(\lambda_i\delta_{i,n})(\lambda_j\delta_{j,n})=\delta_{i,j},
$$
the set $\{\lambda_n e_n:n\in\NN\}$ is $q-$orthonormal. Moreover, for all $v\in U$ and all $n\in\NN$ we have that $\langle v,\lambda_n e_n\rangle_q = \lambda_n^{-1}\langle v, e_n\rangle_p$ and so
$$
\forall \ v\in U,\quad \langle v,v\rangle_q^2=\sum_{n=1}^\infty\lambda_n^{-2}\langle v, e_n\rangle_p^2=\sum_{n=1}^\infty\langle v,\lambda_n e_n\rangle_q^2,
$$
i.e.\ Parseval's equality is satisfied, which is equivalent to say that $\{\lambda_n e_n:n\in\NN\}$ is a complete $q-$orthonornal system in $U$ by  \cite[Chapter~5, \S2.3, Proposition~5]{BouTVS}.
Hence, using \eqref{char-trace}, we get that
\begin{equation}\label{finite-square-pL}
\mathrm{tr}(p\restriction_U/q)=\sum_{n=1}^\infty p(\lambda_n e_n)^2=\sum_{n=1}^\infty \lambda_n^2p(e_n)^2=\sum_{n=1}^\infty \lambda_n^2<\infty
\end{equation}
As $U$ contains $\{e_n\, : \, n \in\mathbb{N} \}$, we get that $U$ is dense in $V$.
\end{proof}

\begin{rem}\thlabel{rem-orth}\
\begin{enumerate}[label = (\roman*)]
\item If $(V, p)$ is Hausdorff and contains a countable total subset, then the existence of a complete $p-$orthonormal system in $V$ is guaranteed by \cite[Chapter~5, \S2.4, Corollary p. V.24]{BouTVS}. In particular, such a system exists when $(V, p)$ is Hausdorff and separable.
\item If $\{f_n:n\in\NN\}$ is another complete $p-$orthonormal system in $V$, then we get that $\sum_{n=1}^\infty\lambda_n^{-2}\langle v, f_n\rangle_p^2<\infty$ for all $v\in TU$ where $T$ is the linear operator $T\colon V\to V$ given by $e_n\mapsto f_n$ for all $n\in\NN$. Indeed, since $T$ maps a complete $p-$orthonormal system to another complete $p-$orthonormal system, $T$ is orthogonal and so for all $v\in U$
we get that $
\sum_{n=1}^\infty\lambda_n^{-2}\langle Tv, f_n\rangle_p^2=\sum_{n=1}^\infty\lambda_n^{-2}\langle Tv,Te_n\rangle_{p}^2=\sum_{n=1}^\infty\lambda_n^{-2}\langle v,e_n\rangle_{p}^2<\infty.
$
\item Iterating the construction in \thref{lemma::a}, we can show that there exists dense subset $U$ of $(V,p)$ such that $U$ can be equipped with a nuclear topology stronger than the one inherited from $p$.
\end{enumerate}
\end{rem}

Combining \thref{lemma::a} with \thref{theorem-main-s_L}, we obtain the following corollary.
\begin{cor}\thlabel{cor-appl-lemma}
Let $A$ be an algebra generated by a linear subspace $V\subseteq A$ and $L$ a normalized $\sum A^2$--positive linear functional on~$A$ such that there exists a complete $s_L-$orthonormal system $\{e_n:n\in\NN\}$ in~$V$. Choose $U$ and $q$ as in \thref{lemma::a}, and set $J(U):=\{\langle W\rangle: W \text{ finite\ dim.\ subspace of }U\}$.

If $\{\alpha\in X(A):\alpha\!\restriction_U\text{is }s_L\text{--continuous}\}\neq\emptyset$ and for each $S\in J(U)$ there exists a representing Radon measure $\nu_S$ for $L\!\restriction_S$, then there exists a representing measure for $L\restriction_{\langle U\rangle}$ with support contained in $\{\alpha\in X(\langle U\rangle):\alpha\!\restriction_U\text{is }q\text{--continuous}\}$.
\end{cor}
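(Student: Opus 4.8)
The plan is to apply \thref{theorem-main-s_L} to the subalgebra $\langle U\rangle$ generated by $U$, using the Hilbertian seminorm $q$ produced by \thref{lemma::a}; essentially everything reduces to checking that the hypotheses of \thref{theorem-main-s_L} survive this change of setting.

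First I would invoke \thref{lemma::a} with $p:=s_L\!\restriction_V$, which is legitimate since $\{e_n:n\in\NN\}$ is by assumption a complete $s_L$-orthonormal system in $V$. After fixing any $(\lambda_n)_{n\in\NN}\subseteq\RR_{>0}$ with $\sum_n\lambda_n^2<\infty$, the lemma yields the dense subspace $U\subseteq V$, a Hilbertian seminorm $q$ on $U$ with $\tr(s_L\!\restriction_U/q)<\infty$, and (from the lemma's proof) a constant $C>0$ such that $s_L(u)\le Cq(u)$ for all $u\in U$. I would then move to the algebra $\langle U\rangle$: it is generated by the linear subspace $U$, and $L\!\restriction_{\langle U\rangle}$ is normalized and $\sum\langle U\rangle^2$-positive because $\sum\langle U\rangle^2\subseteq\sum A^2$. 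Since $s_{L\restriction_{\langle U\rangle}}(a)=\sqrt{L(a^2)}=s_L(a)$ for $a\in\langle U\rangle$, the seminorm induced by $L\!\restriction_{\langle U\rangle}$ restricts on $U$ to $s_L\!\restriction_U$, so $\tr(s_L\!\restriction_U/q)<\infty$ is precisely the finite-trace hypothesis of \thref{theorem-main-s_L} for the pair $(U,q)$.

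Next I would settle the non-emptiness hypothesis, which is the step that has to be handled with care. Given $\alpha\in X(A)$ with $\alpha\!\restriction_U$ being $s_L$-continuous, its restriction $\beta:=\alpha\!\restriction_{\langle U\rangle}\in X(\langle U\rangle)$ satisfies $\beta\!\restriction_U=\alpha\!\restriction_U$; choosing $C'>0$ with $|\alpha(u)|\le C'\,s_L(u)$ for all $u\in U$ and combining with $s_L\le Cq$ gives $|\beta(u)|\le C'C\,q(u)$, so $\beta\!\restriction_U$ is $q$-continuous. Thus the assumed non-emptiness of $\{\alpha\in X(A):\alpha\!\restriction_U\text{ is }s_L\text{-continuous}\}$ forces $\{\beta\in X(\langle U\rangle):\beta\!\restriction_U\text{ is }q\text{-continuous}\}\neq\emptyset$. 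The point to get right is the direction of the comparison $s_L\le Cq$: it makes $s_L$-continuity the \emph{stronger} of the two conditions, which is exactly why one can pass from the given $s_L$-continuous character to a $q$-continuous one, and it is here that the construction of $q$ in \thref{lemma::a} enters.

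Finally, for each $S\in J(U)$ the hypothesis furnishes a representing Radon measure $\nu_S$ for $L\!\restriction_S$, whose support automatically lies in $X(S)$; since the conclusion carries no extra localization, I would take $K:=X(\langle U\rangle)=K_{\sum\langle U\rangle^2}$, so that the finite-level support requirement reduces to $\supp\nu_S\subseteq X(S)$ (equivalently, one may appeal to the $K$-free \thref{MainThm} underlying \thref{theorem-main-s_L}). Because $L(a^2)=s_L(a)^2$ for every $a\in U$, condition \eqref{con-L} holds with constant $1$ for $p=s_L\!\restriction_U$, so Lemma \ref{lem-suff-conc} — the concentration step built into \thref{theorem-main-s_L} — shows that $\{\nu_S:S\in J(U)\}$ is $s_L\!\restriction_U$-concentrated. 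With the finite trace $\tr(s_L\!\restriction_U/q)<\infty$, the non-emptiness above, and the representing measures in hand, \thref{theorem-main-s_L} applied to $\langle U\rangle$ delivers a representing measure $\nu$ for $L\!\restriction_{\langle U\rangle}$ supported in $\{\beta\in X(\langle U\rangle):\beta\!\restriction_U\text{ is }q\text{-continuous}\}$, which is the assertion. All the real work is in the hypothesis translation, and the only genuinely delicate point is the seminorm comparison used to transfer non-emptiness from $s_L$-continuity to $q$-continuity.
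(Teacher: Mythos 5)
Your proposal is correct and follows essentially the same route as the paper's own proof: invoke \thref{lemma::a} with $p=s_L\!\restriction_V$, transfer the non-emptiness hypothesis via the inclusion $\{\alpha\in X(A):\alpha\!\restriction_U\text{ is }s_L\text{--continuous}\}\subseteq\{\beta\in X(\langle U\rangle):\beta\!\restriction_U\text{ is }q\text{--continuous}\}$ (which rests exactly on the comparison $s_L\leq Cq$ you single out), and then apply \thref{theorem-main-s_L} to $L\!\restriction_{\langle U\rangle}$. The only difference is that you spell out details the paper leaves implicit (the restriction map on characters, the identification of $s_{L\restriction_{\langle U\rangle}}$ with $s_L$ on $\langle U\rangle$, the choice $K=X(\langle U\rangle)$, $Q=\sum\langle U\rangle^2$, and the fall-back to the $K$-free \thref{MainThm}), which does not change the substance of the argument.
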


\proof
The assumptions ensure that we can apply \thref{lemma::a} to $(V, s_L\restriction_V)$, which provides a Hilbertian seminorm $q$ on a dense subset $U$ of $(V, s_L\restriction_V)$ such that $\mathrm{tr}(s_L\restriction_U/q)<\infty$. Then $\{\alpha\in X(A):\alpha\!\restriction_U\text{is }s_L\text{--continuous}\}\subseteq\{\alpha\in X(\langle U\rangle):\alpha\!\restriction_U\text{is }q\text{--continuous}\}$ and so $\{\alpha\in X(\langle U\rangle):\alpha\!\restriction_U\text{is }q\text{--continuous}\}\neq\emptyset$. Hence, we can apply \thref{theorem-main-s_L} to $L\restriction_{\langle U\rangle}$ and get the conclusion.
\endproof

In the above corollary the Hilbertian seminorm $q$ on $V$ is not pre-given as in \thref{MainThm} (resp. \thref{MainThm-supp}), but explicitly constructed through \thref{lemma::a}. The price to pay for this is that we obtain an integral representation for the starting linear functional $L$ not on the whole $A$ but just on the sublagebra $\langle U \rangle$ of $A$. Note that the latter subalgebra is actually dense in $(A, s_L)$ (or more in general in $(A,p)$ when $p$ is defined on the whole of $A$, see \thref{lem:tobbi} in the Appendix) and so \thref{cor-appl-lemma} provides a representing measure for $L$ restricted to a dense subalgebra of $(A, s_L)$. However, the representing measure is supported on characters whose restrictions to $U$ lie in the topological dual of $(U,q)$ and the density of $\langle U \rangle$ in $(A, s_L)$ does not allow us to show that is supported on characters whose restrictions to $V$ are in the topological dual of  $(V, s_L\restriction_V)$ and so to get an integral representation for $L$ on the full $A$. The latter effect is not an artefact of the techniques used here. Indeed, if $V=\ell_2$ is endowed with the usual norm $\|\cdot\|_{\ell_2}$ which makes it a Hilbert space, then the associated Gaussian measure (which is the product of infinitely many one-dimensional standard Gaussian measures) gives rise to a functional $L$ on $S(V)$ and the Gaussian measure is the only measure representing $L$. As $s_L\restriction_V=\|\cdot\|_{\ell_2}$, the Gaussian measure cannot be supported on $\{\alpha\in X(S(V)): \alpha\restriction_V \text{ is $s_L\restriction_V-$continuous}\}$ because it is well-known that this set has measure zero (see e.g. \cite[Theorem 1.3]{BeKo88}.

In the case when $U=V$, \thref{cor-appl-lemma} provides a representing measure for the starting $L$ on the whole $A$. This is for example the case when $(V, \tau_V)$ is separable nuclear and $s_L\restriction_V$ is $\tau_V-$continuous, as analyzed in more details in the next subsection.

\subsection{Results on \ref{GenKMP} for $\tau_V$ nuclear}\label{sec:nucl}\

\thref{theorem-main-s_L} and \thref{thm-um-generating-subspace} nicely applies to the case when the generating subspace of the algebra is endowed with a nuclear topology.

\begin{cor}\thlabel{thm-nucelar-criterium::cor}
Let $(V,\tau_V)$ be a nuclear space, $A$ an algebra generated by $V$, $J:=\{\langle W\rangle: W \text{ finite\ dimensional\ subspace of }V\}$ and $K\subseteq X(A)$ closed. Let $L$ be a normalized $\sum A^2$--positive linear functional $L$ on $A$ and $Q$ a quadratic module such that $K=K_Q$.

If for each $S\in J$ there exists a representing Radon measure $\nu_S$ for $L\!\restriction_S$ with support contained in $K_{Q\cap S}$ and $s_L\!\restriction_V$ is $\tau_V$--continuous, then for each Hilbertian seminorm $q$ on $V$ s.t. $\{\alpha\in K_Q:\alpha\!\restriction_V\text{ is }q\text{--continuous}\}\neq\emptyset$ and $\mathrm{tr}(s_L\!\restriction_V/q)<\infty$, there exists a representing measure for $L$ with support contained in $\{\alpha\in K:\alpha\!\restriction_V\text{is }q\text{--continuous}\}$.
\end{cor}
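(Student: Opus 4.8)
The plan is to read this off as a direct instance of \thref{theorem-main-s_L}, with the nuclear hypotheses entering only to make the statement non-vacuous. First I would fix an arbitrary Hilbertian seminorm $q$ on $V$ satisfying the two stated requirements, namely $\tr(s_L\!\restriction_V/q)<\infty$ and $\{\alpha\in K_Q:\alpha\!\restriction_V\text{ is }q\text{--continuous}\}\neq\emptyset$. With such a $q$ in hand, every hypothesis of \thref{theorem-main-s_L} is already present: $A$ is generated by $V$, the set $K=K_Q$ is closed, $L$ is a normalized $\sum A^2$--positive functional, the finite-trace and non-emptiness conditions hold by the choice of $q$, and for each $S\in J$ the assumption furnishes a representing Radon measure $\nu_S$ for $L\!\restriction_S$ supported in $K_{Q\cap S}$. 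Hence \thref{theorem-main-s_L} applies verbatim and produces a representing Radon measure $\nu$ for $L$ with support contained in $\{\alpha\in K:\alpha\!\restriction_V\text{ is }q\text{--continuous}\}$, which is exactly the assertion.

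The only place where nuclearity and the $\tau_V$--continuity of $s_L\!\restriction_V$ do genuine work is in guaranteeing that admissible seminorms $q$ exist at all, since $s_L\!\restriction_V$ itself has infinite trace with respect to itself in infinite dimensions. To exhibit one I would argue as follows. By \thref{def::nuclear-space}, $\tau_V$ is induced by a directed family $\P$ of Hilbertian seminorms such that for every $p\in\P$ there is $q\in\P$ with $\tr(p/q)<\infty$. The $\tau_V$--continuity of $s_L\!\restriction_V$, together with the directedness of $\P$, yields some $p\in\P$ and $C>0$ with $s_L\!\restriction_V\leq Cp$. Choosing $q\in\P$ with $\tr(p/q)<\infty$, I would then bound the trace directly: for every finite $q$--orthonormal set $E$ one has $\sum_{e\in E}s_L(e)^2\leq C^2\sum_{e\in E}p(e)^2\leq C^2\tr(p/q)$, while \thref{prop::trace}--(i) gives $s_L\!\restriction_V\leq Cp\leq C\sqrt{\tr(p/q)}\,q$, so that $\ker(q)\subseteq\ker(s_L\!\restriction_V)$ and the trace is indeed the finite supremum $\tr(s_L\!\restriction_V/q)\leq C^2\tr(p/q)<\infty$.

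I expect no real obstacle here, as the corollary is essentially a translation of \thref{theorem-main-s_L} into the nuclear language. The one conceptual point worth stressing is that nuclearity converts the abstract finite-trace requirement of \thref{theorem-main-s_L} into something automatic for the defining seminorms of $\tau_V$ whenever $s_L\!\restriction_V$ is continuous; what remains as genuine hypotheses are only the solvability of the finite-dimensional subproblems (the existence of the $\nu_S$) and the non-emptiness of the set of $q$--continuous characters lying in $K_Q$, both of which are assumed.
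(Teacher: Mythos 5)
Your proposal is correct and takes essentially the same route as the paper: both reduce the corollary to a direct application of \thref{theorem-main-s_L}, with nuclearity and the $\tau_V$--continuity of $s_L\!\restriction_V$ entering only to produce (via directedness of the defining family and \thref{prop::trace}) a Hilbertian seminorm $q$ with $\tr(s_L\!\restriction_V/q)<\infty$. If anything, you are more explicit than the paper, which leaves the trace estimate as ``easily derived,'' and you handle the quantifier structure more carefully by fixing an arbitrary admissible $q$ rather than constructing a single one.
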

\begin{proof}
As $(V,\tau_V)$ is nuclear and $s_L\!\restriction_V$ is $\tau_V$--continuous, using \thref{prop::trace} and the directnedness of the generating family for $\tau$, we can easily derive that there exists a $\tau_V$--continuous Hilbertian seminorm $q$ on $V$ such that $\mathrm{tr}(s_L\!\restriction_V/q)<\infty$.
Thus, we can apply \thref{theorem-main-s_L} and get the desired conclusion.
\end{proof}

Using exactly the same argument but replacing \thref{theorem-main-s_L} with \thref{thm-um-generating-subspace}, we obtain the following.
\begin{cor}\thlabel{thm-um-generating-subspace::cor}
Let $(V,\tau_V)$ be a nuclear space, $A$ be generated by $V$, $L$ a normalized linear functional on $A$ such that $L(\sum A^2)\subseteq [0,\infty)$ and $Q$ a quadratic module in $A$.
If \begin{enumerate}[label = (\alph*)]
\item $L(Q)\subseteq [0,\infty)$,
\item for each $v\in V$, $\sum\limits_{n=1}^\infty \frac{1}{\sqrt[2n]{L(v^{2n})}}=\infty$,
\item $s_L\!\restriction_V$ is $\tau_V$--continuous,
\end{enumerate}
then, for each Hilbertian seminorm $q$ on $V$ s.t. $\{\alpha\in K_Q:\alpha\!\restriction_V\text{ is }q\text{--continuous}\}\neq\emptyset$ and $\mathrm{tr}(s_L\!\restriction_V/q)<\infty$, there exists a unique representing Radon measure $\nu$ for $L$ such that $\nu(\{\alpha\in K_Q:\alpha\!\restriction_V\text{ is }q\text{--continuous}\})=1$.
\end{cor}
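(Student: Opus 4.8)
The plan is to obtain this corollary as an immediate application of \thref{thm-um-generating-subspace}, running the same scheme as in the proof of \thref{thm-nucelar-criterium::cor} but invoking \thref{thm-um-generating-subspace} in place of \thref{theorem-main-s_L}. The key observation is that the hypotheses of \thref{thm-um-generating-subspace} are tailored to match the present ones: conditions (a) and (b) are literally identical, while the two standing requirements imposed on $q$ in the conclusion---namely $\{\alpha\in K_Q:\alpha\!\restriction_V\text{ is }q\text{--continuous}\}\neq\emptyset$ and $\tr(s_L\!\restriction_V/q)<\infty$---are exactly the nonemptiness assumption and condition (c) of \thref{thm-um-generating-subspace}.

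Concretely, I would fix an arbitrary Hilbertian seminorm $q$ on $V$ with $\{\alpha\in K_Q:\alpha\!\restriction_V\text{ is }q\text{--continuous}\}\neq\emptyset$ and $\tr(s_L\!\restriction_V/q)<\infty$, and simply feed it into \thref{thm-um-generating-subspace}. This yields at once a unique representing Radon measure $\nu$ for $L$ whose support is contained in $\{\alpha\in K_Q:\alpha\!\restriction_V\text{ is }q\text{--continuous}\}$. To upgrade support containment to the stated full-measure identity, I would observe that $\nu$ is a probability measure (as $L(1)=1$) and that the target set is Borel: it equals $K_Q\cap\bigcup_{n\in\NN}C_n$ with $C_n:=\{\alpha\in X(A):\left|\hat v(\alpha)\right|\leq n\,q(v)\ \forall v\in V\}$ closed, hence an $F_\sigma$ set intersected with the closed set $K_Q$. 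Since $\supp(\nu)$ is contained in this measurable set, its complement is $\nu$--null, giving $\nu(\{\alpha\in K_Q:\alpha\!\restriction_V\text{ is }q\text{--continuous}\})=1$.

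It is worth pinning down where nuclearity and the $\tau_V$--continuity of $s_L\!\restriction_V$ actually enter, since they do not play a role in the application of \thref{thm-um-generating-subspace} for a prescribed $q$. Their function is to guarantee that the class of seminorms $q$ over which the conclusion is quantified is non-empty, so that the statement is not vacuous: by $\tau_V$--continuity there exist a Hilbertian seminorm $p$ in the directed family defining $\tau_V$ and a constant $C>0$ with $s_L\!\restriction_V\leq Cp$, and by \thref{def::nuclear-space} there is a $\tau_V$--continuous member $q_0$ of that family with $\tr(p/q_0)<\infty$; then the first property in \thref{prop::trace} gives $\ker(q_0)\subseteq\ker(p)\subseteq\ker(s_L\!\restriction_V)$, while \thref{def::trace} directly yields $\tr(s_L\!\restriction_V/q_0)\leq C^2\tr(p/q_0)<\infty$.

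There is essentially no genuine obstacle here: the statement is a repackaging of \thref{thm-um-generating-subspace} adapted to the nuclear setting. The only point requiring a little care is that the nonemptiness-of-characters condition on $q$ cannot be produced from nuclearity alone, which is why it is retained as an explicit requirement on $q$ rather than being absorbed into the hypotheses on $L$. Uniqueness, finally, is inherited directly from \thref{thm-um-generating-subspace}, which itself derives the finite-dimensional representing measures and their uniqueness from (a) and (b) via \cite[Theorem~3.17-(i)]{InKuKuMi22}.
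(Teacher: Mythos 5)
Your proposal is correct and follows essentially the same route as the paper, which proves this corollary by repeating the argument of \thref{thm-nucelar-criterium::cor} with \thref{theorem-main-s_L} replaced by \thref{thm-um-generating-subspace}: nuclearity and the $\tau_V$--continuity of $s_L\!\restriction_V$ serve only to produce (via directedness of the defining family and \thref{prop::trace}) some admissible $q$, while each prescribed $q$ is fed directly into \thref{thm-um-generating-subspace}. Your additional step upgrading support containment to $\nu(\{\alpha\in K_Q:\alpha\!\restriction_V\text{ is }q\text{--continuous}\})=1$ via the $\sigma$--compactness of the set of $q$--continuous characters is a correct piece of bookkeeping that the paper leaves implicit.
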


%
%
%
%

We can retrieve \cite[Theorem 13]{Smu17} from \thref{thm-um-generating-subspace::cor} applied to $Q=\sum A^2$. Indeed, in \cite[Theorem 13]{Smu17} the assumption (ii) exactly correspond to (a) and (b) of \thref{thm-um-generating-subspace::cor} for $Q=\sum A^2$ (the alternative assumption (i) corresponds to (a') in Remark \ref{rem-Haviland}), and the assumption of the existence of a $\tau-$continuous seminorm $q$ on $V$ such that  $L(v^2)\leq q(v)^2$ for all $v\in V$ guarantees that $s_L\!\restriction_V(a)\leq q(v)$ for all $v\in V$, i.e. also (c) in \thref{thm-um-generating-subspace::cor} is satisfied.

Note that if there exists a $\tau-$continuous Hilbertian seminorm $q$ on $A$ such that  $L(a^2)\leq q(a)^2$ for all $a\in A$, then not only $s_L$ is $\tau-$continuous but also $L$ itself is $\tau$--continuous, since by the Cauchy-Schwarz inequality we have that
$$
\left|L(a)\right|^2=\left|L(1\cdot a)\right|^2\leq L(1)L(a^2)\leq q(a)^2\qquad\text{for all }a\in A.
$$
Viceversa, the continuity of $L$ on certain classes of nuclear topological algebra provides the continuity of $s_L$, allowing us to establish the following result.

\begin{cor}\thlabel{thm-um-generating-subspace::cor2}
Let $(A, \tau)$ be a locally convex nuclear topological algebra which is also barrelled (respectively, has also jointly continuous multiplication), $L$ a $\tau-$continuous linear functional on $A$ and $Q$ a quadratic module in $A$.
If \begin{enumerate}[label = (\alph*)]
\item $L(Q)\subseteq [0,\infty)$,
\item for each $v\in V$, $\sum\limits_{n=1}^\infty \frac{1}{\sqrt[2n]{L(v^{2n})}}=\infty$,
\end{enumerate}
then, for each Hilbertian seminorm $q$ on $V$ s.t. $K_Q\cap\mathfrak{sp}(q)\neq\emptyset$ and $\mathrm{tr}(s_L/q)<\infty$, there exists a unique $(K_Q\cap\mathfrak{sp}(q))$--representing Radon measure $\nu$ for $L$.
\end{cor}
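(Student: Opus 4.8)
The plan is to reduce the statement to \thref{thm-um-generating-subspace::cor} applied with the generating subspace taken to be $V=A$ itself, which is legitimate since $(A,\tau)$ is assumed nuclear and $A$ is trivially generated by itself. Under this identification the set $\{\alpha\in K_Q:\alpha\!\restriction_A\text{ is }q\text{--continuous}\}$ is exactly $K_Q\cap\mathfrak{sp}(q)$ and $s_L\!\restriction_A=s_L$, so that the hypotheses $K_Q\cap\mathfrak{sp}(q)\neq\emptyset$ and $\mathrm{tr}(s_L/q)<\infty$ coincide verbatim with the non-emptiness and trace hypotheses of \thref{thm-um-generating-subspace::cor}. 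Conditions (a) and (b) are assumed directly, and since every quadratic module contains the smallest one $\sum A^2$, condition (a) also forces $L$ to be $\sum A^2$--positive, so $s_L$ is a well-defined Hilbertian seminorm on $A$ via \eqref{seminorm-pL}. Hence the only hypothesis of \thref{thm-um-generating-subspace::cor} left to check is its assumption (c), namely the $\tau$--continuity of $s_L$. The entire content of the proof is therefore to deduce the $\tau$--continuity of $s_L$ from that of $L$, and this is exactly where the two alternative structural assumptions enter.

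If $(A,\tau)$ has jointly continuous multiplication the argument is immediate: the squaring map $a\mapsto a^2$ is the composition of the continuous diagonal embedding with the jointly continuous multiplication, hence continuous, so $a\mapsto s_L(a)^2=L(a^2)$ is continuous as a composition with the continuous $L$. Since this map vanishes at $0$, there is a $\tau$--neighbourhood of the origin on which $s_L<1$; a seminorm bounded on a neighbourhood of $0$ is continuous, so $s_L$ is $\tau$--continuous.

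The barrelled case is the \emph{main obstacle} and calls for a uniform boundedness argument. By the Cauchy--Bunyakovsky--Schwarz inequality one has the representation
\begin{equation*}
s_L(a)=\sup\{|L(ab)|:b\in A,\ s_L(b)\leq 1\}\qquad\text{for all }a\in A,
\end{equation*}
the supremum being attained at $b=a/s_L(a)$ when $s_L(a)\neq 0$. For each fixed $b$ the functional $f_b\colon a\mapsto L(ab)$ is $\tau$--continuous, being the composition of the multiplication $a\mapsto ab$, which is continuous because $A$ is a topological algebra with separately continuous multiplication, with the continuous functional $L$. For each fixed $a$ the family $\{f_b:s_L(b)\leq 1\}$ is bounded, its supremum being $s_L(a)<\infty$, so the family is pointwise bounded. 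Barrelledness then permits invoking the uniform boundedness principle (Banach--Steinhaus), which upgrades pointwise boundedness to equicontinuity and hence yields a $\tau$--continuous seminorm dominating $\sup_{s_L(b)\leq 1}|f_b|=s_L$; therefore $s_L$ is $\tau$--continuous.

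With the $\tau$--continuity of $s_L$ established in either case, \thref{thm-um-generating-subspace::cor} applies with $V=A$ and produces, for every Hilbertian seminorm $q$ satisfying $K_Q\cap\mathfrak{sp}(q)\neq\emptyset$ and $\mathrm{tr}(s_L/q)<\infty$, the unique representing Radon measure $\nu$ for $L$ with $\nu(K_Q\cap\mathfrak{sp}(q))=1$, which is precisely the asserted unique $(K_Q\cap\mathfrak{sp}(q))$--representing measure. The one delicate point to verify carefully is that the baseline notion of topological algebra indeed guarantees separate continuity of multiplication, since this is exactly what renders each $f_b$ continuous in the barrelled branch.
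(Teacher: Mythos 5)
Your proposal is correct, and its overall architecture coincides with the paper's: both proofs reduce the statement to \thref{thm-um-generating-subspace::cor} applied with $V=A$, observe that conditions (a), (b) and the trace/non-emptiness hypotheses transfer verbatim (including that (a) forces $\sum A^2$--positivity because $\sum A^2\subseteq Q$), and isolate the $\tau$--continuity of $s_L$ as the only thing left to prove. The difference lies in how that continuity is obtained. In the jointly continuous case the two arguments are essentially the same: the paper extracts a $\tau$--continuous seminorm $p$ with $|L(ab)|\leq p(a)p(b)$ and concludes $s_L\leq p$, while you use continuity of the squaring map to bound $s_L$ on a neighbourhood of the origin; both are immediate. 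In the barrelled case the paper simply cites Schm\"udgen's result \cite[Lemma~14]{Smu17}, whereas you give a self-contained proof of exactly that lemma: the duality formula $s_L(a)=\sup\{|L(ab)|:s_L(b)\leq 1\}$ (valid by Cauchy--Bunyakovsky--Schwarz, which is available precisely because of the $\sum A^2$--positivity you derived from (a)), separate continuity of multiplication making each $f_b=L(\cdot\, b)$ continuous, pointwise boundedness of the family $\{f_b: s_L(b)\leq 1\}$, and Banach--Steinhaus in barrelled spaces to get equicontinuity and hence a continuous majorant of $s_L$. This is correct, and your closing caution is well placed but harmless: the paper's convention for a topological algebra does take separately continuous multiplication as the baseline, as the phrasing ``respectively, has \emph{also} jointly continuous multiplication'' confirms. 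What the two routes buy: the paper's is shorter by delegation to the literature; yours makes transparent exactly where barrelledness and separate continuity enter, at the cost of redoing the cited lemma. (Equivalently, you could have noted that $s_L$ is lower semicontinuous as a supremum of continuous functions, and that in a barrelled space every lower semicontinuous seminorm is continuous --- the same mechanism in different clothing.)
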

\proof Let us first observe that the $\tau$--continuity of $s_L$ is ensured both when $(A,\tau)$ is barrelled and when has jointly continuous multiplication. Indeed, in the first case the $\tau$--continuity of $L$ ensures the existence of a Hilbertian seminorm $q$ on $A$ such that  $L(a^2)\leq q(a)^2$ for all $a\in A$(see, e.g.\ \cite[Lemma~14]{Smu17}) and so, as observed above, $s_L$ is $\tau$--continuous. In the second case, the joint continuity of the multiplication provides the existence of a $\tau$--continuous seminorm $p$ on $A$ such that $L(ab)\leq p(a)p(b)$ for all $a,b\in A$ and so $s_L(a)^2=L(a^2)\leq p(a)^2$ for all $a\in A$, which shows that $s_L$ is $\tau$--continuous.

Hence, in both cases we can apply \thref{thm-um-generating-subspace::cor} for $V=A$ and get the desired conclusion.
\endproof
 We can easily retrieve \cite[Theorem 15]{Smu17} from \thref{thm-um-generating-subspace::cor2} applied to $Q=\sum A^2$.\\

\section{The case of the symmetric algebra of a nuclear space}\label{Sec:3}

Let us apply the results of Section 2 to the case when $A$ is the symmetric algebra $S(V)$ with $(V,\tau_V)$ nuclear. \thref{thm-um-generating-subspace::cor} immediately gives the following result.

\begin{cor}\thlabel{cor-SV-first}
Let $(V,\tau_V)$ be a nuclear space, $L$ a normalized linear functional on $S(V)$ and $Q$ a quadratic module in $S(V)$. If
\begin{enumerate}[label = (\alph*)]
\item $L(Q)\subseteq [0,\infty)$,
\item for each $v\in V$, $\sum\limits_{n=1}^\infty \frac{1}{\sqrt[2n]{L(v^{2n})}}=\infty$,
\item $s_L\!\restriction_V$ is $\tau$--continuous
\end{enumerate}
then, for each Hilbertian seminorm $q$ on $V$ such that $\mathrm{tr}(s_L\!\restriction_V/q)<\infty$ and $\{\alpha\in K_Q:\alpha\!\restriction_V\text{ is }q\text{--continuous}\}\neq\emptyset$, there exists a unique representing Radon measure $\nu$ for $L$ such that $\nu(\{\alpha\in K_Q:\alpha\!\restriction_V\text{ is }q\text{--continuous}\})=1$.
\end{cor}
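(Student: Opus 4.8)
The plan is to show that Corollary \ref{cor-SV-first} is essentially an immediate specialization of \thref{thm-um-generating-subspace::cor} to the case $A = S(V)$, with the generating subspace being $V$ itself. First I would observe that $S(V)$ is indeed generated by $V$ in the sense defined in the Introduction, since $V$ contains a generating set (any basis of $V$ generates $S(V)$ as an algebra). Thus all the hypotheses needed to invoke \thref{thm-um-generating-subspace::cor} are in place: we have a nuclear space $(V,\tau_V)$, an algebra $A=S(V)$ generated by $V$, a normalized linear functional $L$, and a quadratic module $Q$. The three assumptions (a), (b), (c) of Corollary \ref{cor-SV-first} are verbatim the assumptions (a), (b), (c) of \thref{thm-um-generating-subspace::cor}, so no verification is needed beyond matching them up.

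The only genuine content to check is that assumption (a) of \thref{thm-um-generating-subspace::cor}, namely $L(Q)\subseteq[0,\infty)$, together with the standing requirement $L(\sum A^2)\subseteq[0,\infty)$, is satisfied. Here I would note that assumption (a) of Corollary \ref{cor-SV-first} gives $L(Q)\subseteq[0,\infty)$ directly, but one must also confirm that $L$ is $\sum S(V)^2$-positive so that the Cauchy--Bunyakovsky--Schwarz inequality (and hence the well-definedness of $s_L$) is available. This follows because assumption (b) presupposes that the quantities $L(v^{2n})$ are non-negative, and more fundamentally one should take $Q\supseteq\sum S(V)^2$ or argue that the existence of the one-dimensional representing measures forces $\sum A^2$-positivity. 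The cleanest route is simply to observe that the hypotheses of \thref{thm-um-generating-subspace::cor} are met and then quote that theorem.

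With the hypotheses matched, I would then directly apply \thref{thm-um-generating-subspace::cor} with $A=S(V)$: for each Hilbertian seminorm $q$ on $V$ with $\{\alpha\in K_Q:\alpha\!\restriction_V\text{ is }q\text{--continuous}\}\neq\emptyset$ and $\mathrm{tr}(s_L\!\restriction_V/q)<\infty$, that theorem yields a unique representing Radon measure $\nu$ for $L$ with $\nu(\{\alpha\in K_Q:\alpha\!\restriction_V\text{ is }q\text{--continuous}\})=1$. This is precisely the conclusion of Corollary \ref{cor-SV-first}, so the proof is complete in one line once the reduction is spelled out.

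I do not anticipate any serious obstacle here, since the statement is a direct corollary; the proof is essentially the single sentence ``Apply \thref{thm-um-generating-subspace::cor} with $A=S(V)$.'' The only point requiring a moment of care is the bookkeeping around positivity of $L$ on squares (so that $s_L$ is a genuine Hilbertian seminorm via the Cauchy--Bunyakovsky--Schwarz inequality) and the verification that $V$ genuinely generates $S(V)$, but both are standard facts about the symmetric algebra and are already built into the framework established in Section \ref{sec::main-results}.
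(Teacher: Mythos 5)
Your proposal is correct and is exactly the paper's route: the paper derives \thref{cor-SV-first} by immediate application of \thref{thm-um-generating-subspace::cor} with $A=S(V)$ generated by $V$. Your only hesitation---the standing hypothesis $L(\sum A^2)\subseteq[0,\infty)$---resolves even more simply than you suggest: since $\sum A^2$ is the smallest quadratic module in $A$ (as noted in Section~\ref{sect1}), every quadratic module $Q$ contains $\sum S(V)^2$, so assumption (a) already yields the required $\sum S(V)^2$--positivity of $L$.
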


We can retrieve \cite[Theorem 16]{Smu17} from \thref{cor-SV-first} applied to $Q=\sum A^2$. Indeed, the definition of nuclear space  in \cite[p.~445]{Smu17} is covered by \thref{def::nuclear-space} (for more details see \thref{schm-nucl}), \cite[Theorem 16]{Smu17} the assumption (ii) exactly correspond to (a) and (b) of \thref{cor-SV-first} for $Q=\sum A^2$ (the alternative assumption (i) corresponds to (a') in Remark \ref{rem-Haviland}), and the assumption of the existence of a $\tau-$continuous seminorm $q$ on $V$ such that  $L(v^2)\leq q(v)^2$ for all $v\in V$ guarantees that $s_L\!\restriction_V(a)\leq q(v)$ for all $v\in V$, i.e. also (c) in \thref{cor-SV-first} is satisfied.

If to each $v \in V$ we associate the operator $A_v(w)=vw$ for any $w\in S(V)$, then we can also retrieve \cite[Theorem 4.3, (i) \text{$\leftrightarrow$} (iii)]{BY75} for such operators from the version of \thref{cor-SV-first} with (a) and (b) replaced by (a') in \thref{rem-Haviland} by taking $L=T$ and $K=\overline{\mathcal{Z}}$ (see also \cite[Theorem 3.11]{IK-probl}).\\

\thref{cor-SV-first} also allows to easily prove the following result.
\begin{cor}\thlabel{simil-Berezansky}
Let $(V,\tau_V)$ be a nuclear space with $\tau_V$ induce by a directed family of seminorms $\P$ on $V$, $L$ a normalized linear functional on $S(V)$ and $Q$ a quadratic module in $S(V)$. If
\begin{enumerate}[label = (\alph*)]
\item $L(Q)\subseteq [0,\infty)$,
\item for each $v\in V$, $\sum\limits_{n=1}^\infty \frac{1}{\sqrt[2n]{L(v^{2n})}}=\infty$,
\item for each $d\in\NN$, there exists $p\in\P$ such that the restriction $L\colon S(V)_d\to\RR$ is $\overline{p}_d$--continuous, where $\overline{p}_d$ is the quotient seminorm on the $d-$th homogeneous component $S(V)_d$ of $S(V)$ induced by the projective tensor seminorm $p^{\otimes d}$,
\end{enumerate}
then, for each Hilbertian seminorm $q$ on $V$ such that $\mathrm{tr}(s_L\!\restriction_V/q)<\infty$ and $\{\alpha\in K_Q:\alpha\!\restriction_V\text{ is }q\text{--continuous}\}\neq\emptyset$, there exists a unique representing Radon measure $\nu$ for $L$ such that $\nu(\{\alpha\in K_Q:\alpha\!\restriction_V\text{ is }q\text{--continuous}\})=1$.
\end{cor}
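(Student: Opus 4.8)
The plan is to deduce this statement directly from \thref{cor-SV-first}. Observe that the two corollaries have identical conclusions and share hypotheses (a) and (b) verbatim; only the third hypothesis differs. Consequently, the entire argument reduces to showing that hypothesis (c) here implies hypothesis (c) of \thref{cor-SV-first}, namely that $s_L\!\restriction_V$ is $\tau_V$--continuous. Once this is established, an application of \thref{cor-SV-first} with the same $Q$ and the same $q$ yields the desired unique representing Radon measure $\nu$ with $\nu(\{\alpha\in K_Q:\alpha\!\restriction_V\text{ is }q\text{--continuous}\})=1$.

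To carry out this reduction I would invoke hypothesis (c) only in the case $d=2$. It furnishes a seminorm $p\in\P$ and a constant $C>0$ such that $|L(w)|\leq C\,\overline{p}_2(w)$ for all $w\in S(V)_2$. The key point is then to control $\overline{p}_2(v^2)$ by $p(v)^2$ for each $v\in V$. Since $\overline{p}_2$ is the quotient seminorm on $S(V)_2$ induced by the projective tensor seminorm $p^{\otimes 2}$ on $V^{\otimes 2}$, and since the symmetric square $v^2$ is the image of the elementary tensor $v\otimes v$ under the canonical quotient map, the definition of the quotient seminorm together with the definition of the projective tensor seminorm (applied to the one-term decomposition of $v\otimes v$) gives
\[
\overline{p}_2(v^2)\leq p^{\otimes 2}(v\otimes v)\leq p(v)^2.
\]

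Combining this with the continuity estimate yields
\[
s_L(v)^2=L(v^2)\leq |L(v^2)|\leq C\,\overline{p}_2(v^2)\leq C\,p(v)^2,
\]
so that $s_L(v)\leq\sqrt{C}\,p(v)$ for all $v\in V$. As $p\in\P$ is $\tau_V$--continuous (the family $\P$ inducing $\tau_V$), this proves that $s_L\!\restriction_V$ is $\tau_V$--continuous, which is precisely hypothesis (c) of \thref{cor-SV-first}. Applying that corollary then completes the proof.

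I do not expect any genuine obstacle here: the only computational ingredient is the estimate $\overline{p}_2(v^2)\leq p(v)^2$, which is immediate once the definitions of the projective tensor and quotient seminorms are unwound. It is worth noting that only the $d=2$ instance of hypothesis (c) enters the argument; the stronger requirement for all $d\in\NN$ is retained merely to align the statement with the classical Berezansky--Kondratiev formulation.
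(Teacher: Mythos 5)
Your proposal is correct and follows essentially the same route as the paper: both reduce to \thref{cor-SV-first} by using only the $d=2$ instance of hypothesis (c) together with the estimate $\overline{p}_2(v^2)\leq p(v)^2$ to conclude that $s_L\!\restriction_V$ is $p$--continuous, hence $\tau_V$--continuous. The only cosmetic difference is that you unwind the projective tensor/quotient seminorm definitions to get that estimate, whereas the paper cites it from the literature (\cite[Lemma 3.1]{GhInKuMa18}).
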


 \begin{proof}
Since $L\!\restriction_{S(V)_2}$ is $\overline{p}_2$--continuous for some $p\in\P$, there exists $C>0$ such that $L(v^2)\leq C\overline{p}_2(v)$ for all $v\in V$. Moreover, as $\overline{p}_d$ comes from the projective tensor seminorm $p^{\otimes d}$, we easily get that $\overline{p}_d(v^d)\leq p(v)^d$ holds for all $v\in V$ and all $d\in\NN$, see e.g. \cite[Lemma 3.1]{GhInKuMa18}. Using the latter for $d=2$, we obtain that $L(v^2)\leq C\overline{p}_2(v)\leq Cp(v)^2$ for all $v\in V$, namely that the Hilbertian seminorm $s_L\!\restriction_V$ is $p$--continuous and so $\tau_V$--continuous. Hence, the conclusion follows at once from \thref{cor-SV-first}.
\end{proof}

Using \thref{MainThm-supp_2} instead of \thref{thm-um-generating-subspace::cor}, we can prove a slightly generalization of the classical solution to \ref{GenKMP} for $(V,\tau_V)$ nuclear in \cite[Chapter~5, Theorem 2.1]{BeKo88} (cf. \cite[Chapter 5, Section 2.3]{BeKo88} and \cite{BeSi71}).

\begin{thm} \thlabel{BKS-thm} Let $(V,\tau_V)$ be a Hausdorff separable nuclear space with $\tau_V$ induced by a directed family $\P$ of Hilbertian seminorms on $V$, $L$ a normalized linear functional on $S(V)$ and $K$ a closed subset of $V^\ast$. For any $n\in\NN$ and $s\in\P$, let $\tilde{s}^{(n)}$ the Hilbertian seminorm on $S(V)_{n}$~given~by $\widetilde{s}^{(n)}(b)\!:=\!\sqrt{\sum\limits_{i=1}^N\sum\limits_{j=1}^N \langle b_{i1}, b_{j1}\rangle_{s}\cdots \langle b_{in}, b_{jn}\rangle_{s}}$ for any $b\!:=\sum\limits_{i=1}^N b_{i1}\!\cdots\!b_{in}\!\in~S(V)_{n}$ with $N \in \mathbb{N}$ and $b_{ik} \in V$ for $k=1, \ldots , n$. If
\begin{enumerate}
\item $L(Q) \subseteq [0,\infty)$, where $Q$ is a quadratic module of $S(V)$ such that $K=K_Q$

\item {there exists a countable subset $E$ of $V$ whose linear span is dense in $(V,\tau_V)$ such that $\sum_{k=1}^\infty\frac{1}{\sqrt[2k]{L(v^{2k})}}=\infty$ for all $v\in span(E)$}
\item\label{itemBK3} For any $d\in\NN$, there exists $p_{{2}d}\in\P$ such that the restriction $L\colon S(V)_{{2}d}\to\RR$ is $\widetilde{p_{{2}d}}^{({2}d)}$--continuous
\item $K\cap V'_{q_2}\neq\emptyset$, where $q_2\in\P$ is such that $\tr(p_2/q_2)<\infty$,
\end{enumerate}
then there exists a representing Radon measure $\mu$ for $L$ with support contained in $K\cap V'_{q_2}$.
\end{thm}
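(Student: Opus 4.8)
The plan is to apply \thref{MainThm-dense-supp} to the algebra $A=S(V)$, the closed set $K$ (viewed inside $X(S(V))\cong V^\ast$ via the identification recalled in the introduction), the quadratic module $Q$ with $K=K_Q$, the dense subalgebra $B:=S(\mathrm{span}(E))$, and a suitable Hilbertian seminorm on $S(V)$ playing the role of the seminorm $q$ there. First I would record the two scalar inputs living on $V$: evaluating hypothesis \eqref{itemBK3} at $d=1$ on squares gives $C_1>0$ with $L(v^2)\le C_1\,\widetilde{p_2}^{(2)}(v^2)=C_1\,p_2(v)^2$ for all $v\in V$ (since $\widetilde{p_2}^{(2)}(v^2)=p_2(v)^2$), while hypothesis (4) provides $q_2\in\P$ with $\mathrm{tr}(p_2/q_2)<\infty$. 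These are exactly the data from which the nuclearity of $(V,\tau_V)$ will be transported to $S(V)$.

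Next I would lift this structure to $S(V)$ through \thref{lemma-aux}, which constructs the Hilbertian seminorm $\tilde q$ on $S(V)$ needed to run \thref{MainThm-dense-supp}; applying the same construction to $p_2$ yields a companion Hilbertian seminorm $\tilde p$ on $S(V)$ whose degree-one part is proportional to $p_2$ and with $\mathrm{tr}(\tilde p/\tilde q)<\infty$ (a tensor-power trace estimate fed by $\mathrm{tr}(p_2/q_2)<\infty$). Four properties then have to be verified: (i) $L(a^2)\le C\,\tilde q(a)^2$ for all $a\in S(V)$, obtained by decomposing $a=\sum_n a_n$ into homogeneous components, applying \eqref{itemBK3} with $d=n$ to $a_n^2\in S(V)_{2n}$, and using submultiplicativity $\widetilde{p_{2n}}^{(2n)}(a_n^2)\le\widetilde{p_{2n}}^{(n)}(a_n)^2$ of the seminorms $\widetilde{\cdot}^{(n)}$ together with the weights defining $\tilde q$; (ii) $B=S(\mathrm{span}(E))$ is dense in $(S(V),\tilde q)$, which comes from the $\tau_V$-density (hence $q_2$-density) of $\mathrm{span}(E)$ in $V$ propagated through the tensor-power seminorms and a truncation argument; (iii) $\mathfrak{sp}(\tilde q)\subseteq\{\alpha\in X(S(V)):\alpha\!\restriction_V\in V'_{q_2}\}$, because $\tilde q\!\restriction_V$ is a positive multiple of $q_2$; and (iv) $\mathfrak{sp}(\tilde q)\cap K\neq\emptyset$, which I would deduce from (4) by choosing the weights in $\tilde q$ large enough that every $\ell\in V'_{q_2}$ gives a $\tilde q$-continuous character $\mathrm{ev}_\ell$, so that a point of $K\cap V'_{q_2}$ furnishes an element of $\mathfrak{sp}(\tilde q)\cap K$.

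With the seminorms in place, I would treat the finite-dimensional moment problems exactly as in the proof of \thref{thm-um-generating-subspace}: every $S\in I=\{\langle W\rangle:W\text{ fin.\ dim.\ subspace of }B\}$ is generated by a finite-dimensional $W\subseteq\mathrm{span}(E)$, so hypotheses (1) and (2) supply the $Q\cap S$-positivity and the Carleman-type condition on the generators of $S$, and \cite[Theorem~3.17-(i)]{InKuKuMi22} yields a unique representing Radon measure $\nu_S$ for $L\!\restriction_S$ with support in $K_{Q\cap S}$. Since $L(v^2)\le C_1 p_2(v)^2$ and $\tilde p\!\restriction_{\mathrm{span}(E)}$ is proportional to $p_2$, condition \eqref{con-L} holds for $p=\tilde p$ on the generating subspace $\mathrm{span}(E)$ of $B$, so Lemma~\ref{lem-suff-conc} gives that $\{\nu_S:S\in I\}$ is $\tilde p$-concentrated. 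Combined with $\mathrm{tr}(\tilde p/\tilde q)<\infty$, all hypotheses of \thref{MainThm-dense-supp} are met, producing a representing Radon measure $\mu$ for $L$ with support in $\mathfrak{sp}(\tilde q)\cap K$; by (iii) this support is contained in $K\cap V'_{q_2}$, as required.

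The main obstacle is concentrated entirely in the construction of $\tilde q$ in \thref{lemma-aux} and the simultaneous verification of (i)--(iv): the competing requirements that $\tilde q$ be large enough to dominate $s_L$ on every homogeneous component (needing \eqref{itemBK3} for \emph{all} $d$ and the submultiplicativity of the $\widetilde{\cdot}^{(n)}$) yet admit a companion $\tilde p$ with $\mathrm{tr}(\tilde p/\tilde q)<\infty$ and leave $\mathfrak{sp}(\tilde q)$ wide enough to meet $K$. Reconciling these through the choice of weights---effectively a tensor-power trace computation built on $\mathrm{tr}(p_2/q_2)<\infty$---is the technical heart of the argument; once \thref{lemma-aux} is available, the remaining steps are an essentially mechanical application of the finite-dimensional existence result and of \thref{MainThm-dense-supp}.
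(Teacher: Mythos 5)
Your proposal follows the same skeleton as the paper's proof: finite-dimensional representing measures from hypotheses (1)--(2) via the Nussbaum-type result in \cite{InKuKuMi22}, the pair of Hilbertian seminorms $\tilde p,\tilde q$ on $S(V)$ from \thref{lemma-aux}, concentration via Lemma \ref{lem-suff-conc}, and finally \thref{MainThm-dense-supp} with $A=S(V)$, $B=S(\mathrm{span}(E))$, $q=\tilde q$, $p=\tilde p$, together with the identification $\sp(\tilde q)\cong V^\prime_{q_2}$. However, there is a genuine gap in your concentration step. \thref{MainThm-dense-supp} requires $\{\nu_S:S\in I\}$ to be $p$-concentrated for a Hilbertian seminorm $p$ defined on the \emph{whole} subalgebra $B$; by Definition \ref{cont-projsys} (applied with the generating subspace taken to be $B$) this is a condition on all $a\in B_\delta(p)\cap S$, i.e.\ on elements of $B$ of every degree. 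You verify \eqref{con-L} only on $\mathrm{span}(E)$, using the degree-one estimate $L(v^2)\le C_1\,p_2(v)^2$; feeding this into Lemma \ref{lem-suff-conc} with generating subspace $\mathrm{span}(E)$ yields only the weaker property in which the quantifier runs over $a\in B_\delta(\tilde p)\cap S\cap\mathrm{span}(E)$. That does not fulfil the hypothesis of \thref{MainThm-dense-supp}: the degree-one bound says nothing about $L(a^2)$ for, say, $a\in S(V)_2$. What is needed, and what the paper proves as \thref{lemma-aux}-(\ref{itemBYNorm3}), is the bound $L(a^2)\le\left(\sum_{d}\lambda_d^{-2}\right)\tilde p(a)^2$ for \emph{all} $a\in S(V)$, which uses hypothesis (\ref{itemBK3}) for every degree $d$; Lemma \ref{lem-suff-conc}, applied with generating subspace $B$ itself, then gives the required concentration.

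This gap is tied to a second inaccuracy in your construction: you describe $\tilde p$ as obtained by ``applying the same construction to $p_2$'', i.e.\ from tensor powers of the single seminorm $p_2$. That cannot work: hypothesis (\ref{itemBK3}) controls $L$ on $S(V)_{2d}$ only w.r.t.\ $\widetilde{p_{2d}}^{(2d)}$, and since $\P$ is merely directed there is no comparison between $\widetilde{p_{2d}}^{(2d)}$ and $\widetilde{p_{2}}^{(2d)}$, so a seminorm built from $p_2$ alone cannot dominate $s_L$ on the higher homogeneous components --- which is exactly what the full concentration requires. In \eqref{def-p-tilde} the seminorm $\tilde p$ is built from \emph{all} the $p_{2d}$'s, weighted by $\lambda_d^2 C_{L,2d}$, and correspondingly the trace estimate $\tr(\tilde p/\tilde q)<\infty$ in \thref{lemma-aux}-(\ref{itemBYNorm4}) needs $\tr(p_{2d}/q_{2d})<\infty$ for every $d$ (supplied by nuclearity) together with the summability condition \eqref{seq-lemma2} on the weights, not just $\tr(p_2/q_2)<\infty$. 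Once $\tilde p$ and $\tilde q$ are taken as in \thref{lemma-aux}, the remaining steps of your argument --- the density of $S(\mathrm{span}(E))$ in $(S(V),\tilde q)$, the finite-dimensional step, the nonemptiness of $\sp(\tilde q)\cap K$, and the final support identification via \thref{lemma-aux}-(\ref{itemBYNorm5}) --- go through and coincide with the paper's proof.
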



\begin{rem}\

If for each $d$ the map $V\rightarrow \mathbb{R}, v \mapsto L(v^d)$ is $\tau_V-$continuous, then the assumption~(3) in \thref{BKS-thm} holds. Indeed, the $\tau_V-$continuity of the map $V\rightarrow \mathbb{R}, v \mapsto L(v^d)$ implies that for any $d$ there exists a $r_d \in \mathcal{P}$ such that $|L(v^d)| \leq 1$ for all $v \in V$ with $r_d(v) \leq 1$. Then $\left|L\left(\left(\frac{v}{{r}_d(v)}\right)^d\right)\right| \leq 1$ for all $v\in V$ and so
\begin{equation*}
\left|L(v^d)\right| \leq {r}_d(v)^d, \forall v \in V.
\end{equation*}
By using the multivariate polarization identity, this in turn provides that
\begin{equation*}
\left|L(v_1\cdots v_d)\right| \leq \frac{d^d}{d!}{r}_d(v_1)\cdots {r}_d(v_d), \forall v_1,\ldots, v_d \in V.
\end{equation*}
Then, since $(V, \tau)$ is nuclear, \thref{lemma-aux}-(\ref{itemBYNorm1}) below ensures that for any $p_d \in \P$ with $\mathrm{tr}(r_d/p_d) < \infty$ we have
\begin{equation*}
\left|L(a)\right| \leq \frac{\left( \mathrm{tr}(r_d/p_d) d \right)^d}{d!} \widetilde{p}_d^{(d)}(a) , \forall a \in S(V)_d
\end{equation*}
and hence, in particular, $L\restriction_{S(V)_{2d}}$ is $\widetilde{p}_{2d}^{(2d)}-$continuous.

\end{rem}

\begin{lem}\thlabel{lemma-aux} Let $(V,\tau_V)$ be a separable nuclear space with $\tau_V$ induced by a directed family of Hilbertian seminorms $\P$ on $V$ and $L$ a normalized linear functional.
\begin{enumerate}
\item\label{itemBYNorm1} If for some $d \in \mathbb{N}$, there exists $r\in \P$ and $\widetilde{C}_{L,d}$, such that
$$
| L(v_1\ldots v_d)| \leq \widetilde{C}_{L,d}\, r(v_1) \ldots r(v_d)  \qquad \forall v_1, \ldots , v_d \in V,
$$
then for any $s \in \P$ with $\mathrm{tr}(r/s)<\infty$ we have that
$$
| L(a)| \leq \widetilde{C}_{L,d} \left( \mathrm{tr}(r/s) \right)^d \,  \widetilde{s}^{(d)} (a) \qquad \forall a \in S(V)_d .
$$
\item\label{itemBYNorm2} Let $\ell\in V^\ast$ for some $r\in\P$ and $\alpha_\ell$ the character on $S(V)$ associated to $\ell$, which is uniquely determined by defining $\alpha_\ell(v_1 \ldots v_d):= \ell(v_1) \ldots \ell(v_d)$ for all $d \in \mathbb{N}$ and $v_1 , \ldots , v_d \in V$. If $\ell\in V'_r$ for some $r\in\P$, then the associate character $\alpha_\ell$ on $S(V)$ is such that for any $s \in \P$ with $\mathrm{tr}(r/s)<\infty$ and any $d\in \NN$ the following holds
$$
| \alpha_\ell(a)| \leq \left( r'(\ell) \mathrm{tr}(r/s) \right)^d  \,  \widetilde{s}^{(d)} (a) \qquad \forall a \in S(V)_d .
$$

\item\label{itemBYNorm3}
If the assumption (\ref{itemBK3}) in \thref{BKS-thm} holds with continuity constant $C_{L,2d}$ and $(\lambda_d)_{d\in\NN_0}$ is a sequence of real numbers such that
\begin{equation}\label{seq-lemma}
\sum_{d=0}^\infty \lambda_d^{-2} < \infty,
\end{equation}
then the seminorm defined by
\begin{equation}\label{def-p-tilde}
\tilde{p}(a)^2 := \lambda_0^2 |a^{(0)}|^2 + \sum_{d=1}^\infty \lambda_d^2 C_{L,2d} \left(\widetilde{p_{2d}}^{(d)}(a^{(d)})\right)^2, \quad \forall \ a:=\sum_{d=0}^\infty a^{(d)}\in S(V)
\end{equation}
is Hilbertian and
$$
|L(a)|^2 \leq L(a^2)\leq\left({\sum\limits_{d=0}^\infty \lambda_d^{-2} } \right)\tilde{p}(a)^2 \qquad \mbox{for all } a\in S(V).
$$
\item\label{itemBYNorm4}
Let $C_{L,d}$, $(\lambda_d)_{d\in\NN_0}$ and $\tilde{p}$ as in (\ref{itemBYNorm3}), and for each $d\in\NN$ take a seminorm $q_{2d}\in \P$~such that $\tr(p_{2d}/q_{2d})<\infty$ (such a seminorm always exists by nuclearity). If $(\eta_d)_{d\in\NN_0}$ is a sequence of real numbers such that
\begin{equation}\label{seq-lemma2}
\sum_{d=1}^\infty \frac{\lambda_d^2}{\eta_d^2} C_{L,2d}  \tr(p_{2d}/q_{2d})^d< \infty,
\end{equation}
then the seminorm defined by
\begin{equation}\label{def-q-tilde}
\tilde{q}(a)^2 := \eta_0^2 |a^{(0)}|^2 + \sum_{d=1}^\infty\eta_d^2 \left(\widetilde{q_{2d}}^{(d)}(a^{(d)})\right)^2,  \quad \forall \ a:=\sum_{d=0}^\infty a^{(d)}\in S(V)
\end{equation}
is Hilbertian and such that $\tr(\tilde{p}/\tilde{q}) < \infty$.

\item\label{itemBYNorm5} Let $C_{L,d}$, $(\lambda_d)_{d\in\NN_0}$ and $\tilde{p}$ as in (\ref{itemBYNorm3}), and for each $d\in\NN$ take a seminorm $q_{2d}\in \P$~such that $\tr(p_{2d}/q_{2d})<\infty$ for all $d\in\NN$ and also $\tr(q_{2}/q_{2d})<\infty$ for all $d\in\NN$ with $d\geq 2$ (such a seminorm always exists by nuclearity). If $(\eta_d)_{d\in\NN_0}$ is a sequence of real numbers fulfilling \eqref{seq-lemma2} and
\begin{equation}\label{seq-lemma3}
\sum_{d=1}^\infty  \frac{c^{2d}}{\eta_d^2} < \infty , \forall c>0,
\end{equation}
then $\ell \in V^*$ is $q_2$-continuous if and only if $\alpha_\ell$ is $\tilde{q}$ continuous, i.e. $V'_{q_2}$ ans $\sp(\tilde{q})$ are isomorphic, where $\tilde{q}$ is as in \eqref{def-q-tilde}.
\end{enumerate}
\end{lem}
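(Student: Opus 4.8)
The plan is to establish the five items in order, as each relies on its predecessors, the true engine being item~(\ref{itemBYNorm1}): it upgrades a ``projective'' bound (a product of $r$-seminorms) to a bound by the Hilbertian seminorm $\widetilde{s}^{(d)}$. Two structural facts about these seminorms follow directly from factoring the double sum in their definition and will be used throughout: the cross-multiplicativity $\widetilde{s}^{(m+n)}(bc)=\widetilde{s}^{(m)}(b)\,\widetilde{s}^{(n)}(c)$ for $b\in S(V)_m$, $c\in S(V)_n$, and the fact that for a complete $s$-orthonormal system $(e_k)_k$ of $V$ the degree-$d$ monomials in the $e_k$ form an orthogonal generating family of $(S(V)_d,\widetilde{s}^{(d)})$ with norms computable from $\langle\cdot,\cdot\rangle_s$. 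For item~(\ref{itemBYNorm1}) I would note that $\tr(r/s)<\infty$ forces $r\le\sqrt{\tr(r/s)}\,s$ by \thref{prop::trace}-(\ref{properties-trace::1}), so \cite[V, \S4.8, Theorem~2]{BouTVS} (as in the proof of \thref{lem::fund-lem-Umemura}) yields a complete $s$-orthonormal system $(e_k)$ that is simultaneously $r$-orthogonal; setting $\mu_k:=r(e_k)^2$ one has $\sum_k\mu_k=\tr(r/s)$ by \eqref{char-trace}. Expanding $a\in S(V)_d$ in the monomial basis, bounding each monomial by the hypothesis as $|L(e_{k_1}\cdots e_{k_d})|\le\widetilde{C}_{L,d}\sqrt{\mu_{k_1}\cdots\mu_{k_d}}$, and applying Cauchy--Schwarz with the weights $\mu_{k_1}\cdots\mu_{k_d}$ (whose total sum factorises as $(\sum_k\mu_k)^d$) bounds $|L(a)|$ by $\widetilde{C}_{L,d}\,\widetilde{s}^{(d)}(a)$ times a power of $\tr(r/s)$, which is the asserted estimate.

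Item~(\ref{itemBYNorm2}) is then immediate: since $|\alpha_\ell(v_1\cdots v_d)|=\prod_i|\ell(v_i)|\le r'(\ell)^d\,r(v_1)\cdots r(v_d)$, the restriction $\alpha_\ell\!\restriction_{S(V)_d}$ satisfies the hypothesis of~(\ref{itemBYNorm1}) with constant $r'(\ell)^d$, and (\ref{itemBYNorm1}) gives the claim. For item~(\ref{itemBYNorm3}), the seminorm $\tilde p$ is Hilbertian because $\tilde p^2$ is a sum of scalar multiples of squares of Hilbertian seminorms (finitely many nonzero on each $a$), hence induced by a positive semidefinite bilinear form; the inequality $|L(a)|^2\le L(a^2)$ is Cauchy--Bunyakovsky--Schwarz applied to $1\cdot a$. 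For the remaining inequality I would write $a=\sum_d a^{(d)}$, expand $L(a^2)=\sum_{d,e}L(a^{(d)}a^{(e)})$, bound each term by $s_L(a^{(d)})s_L(a^{(e)})$ via Cauchy--Bunyakovsky--Schwarz so that $L(a^2)\le\big(\sum_d s_L(a^{(d)})\big)^2$, use $s_L(a^{(d)})^2=L((a^{(d)})^2)\le C_{L,2d}\,\widetilde{p_{2d}}^{(2d)}((a^{(d)})^2)=C_{L,2d}\,(\widetilde{p_{2d}}^{(d)}(a^{(d)}))^2$ (the last equality being the cross-multiplicativity with $b=c=a^{(d)}$), and finally apply Cauchy--Schwarz against the weights $\lambda_d$, recognising $\sum_d\lambda_d^2 s_L(a^{(d)})^2\le\tilde p(a)^2$.

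Item~(\ref{itemBYNorm4}) rests on two trace facts on the graded algebra. First, since both $\tilde p^2$ and $\tilde q^2$ split as orthogonal sums over the homogeneous components $S(V)_d$, a complete $\tilde q$-orthonormal system of $S(V)$ can be taken as the union of such systems in each $S(V)_d$, so the trace is additive over the grading, $\tr(\tilde p/\tilde q)=\sum_d\tr(\tilde p\!\restriction_{S(V)_d}/\tilde q\!\restriction_{S(V)_d})$. Second, the tensor-power estimate $\tr(\widetilde{p_{2d}}^{(d)}/\widetilde{q_{2d}}^{(d)})\le\tr(p_{2d}/q_{2d})^d$, which I would obtain from a complete $q_{2d}$-orthonormal, $p_{2d}$-orthogonal system $(e_k)$ (again \cite[V, \S4.8, Theorem~2]{BouTVS}) by observing that the degree-$d$ monomials in the $e_k$ are $\widetilde{q_{2d}}^{(d)}$-orthonormal with $\widetilde{p_{2d}}^{(d)}$-norm-squares $\prod_i p_{2d}(e_{k_i})^2$, whose sum is dominated by $(\sum_k p_{2d}(e_k)^2)^d=\tr(p_{2d}/q_{2d})^d$. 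Combining these with the scaling rule \thref{prop::trace}-(\ref{properties-trace::2}) gives $\tr(\tilde p/\tilde q)\le\frac{\lambda_0^2}{\eta_0^2}+\sum_{d\ge1}\frac{\lambda_d^2}{\eta_d^2}C_{L,2d}\,\tr(p_{2d}/q_{2d})^d$, finite by \eqref{seq-lemma2}; the Hilbertian character of $\tilde q$ is as in~(\ref{itemBYNorm3}).

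Finally, for item~(\ref{itemBYNorm5}) I would treat the two implications separately. If $\alpha_\ell$ is $\tilde q$-continuous, then restricting to $S(V)_1=V$, where $\tilde q$ reduces to $\eta_1 q_2$, shows at once that $\ell\in V'_{q_2}$. Conversely, for $\ell\in V'_{q_2}$ I would apply~(\ref{itemBYNorm2}) componentwise with $r=q_2$ and $s=q_{2d}$ to get $|\alpha_\ell(a^{(d)})|\le(q_2'(\ell)\tr(q_2/q_{2d}))^d\,\widetilde{q_{2d}}^{(d)}(a^{(d)})$, sum over $d$, and apply Cauchy--Schwarz against $\tilde q$, obtaining $|\alpha_\ell(a)|\le C\,\tilde q(a)$ with $C^2=\sum_d(q_2'(\ell)\tr(q_2/q_{2d}))^{2d}/\eta_d^2$. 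The main obstacle is precisely the finiteness of this series: the growth in $d$ of the factors $\tr(q_2/q_{2d})^{2d}$ must be absorbed, and here one uses \eqref{seq-lemma3} together with the freedom, granted by the nuclearity and directedness of $\P$, to choose the $q_{2d}$ so that $\tr(q_2/q_{2d})$ stays uniformly bounded (e.g.\ $\le 1$ after enlarging $q_{2d}$), which reduces the series to the convergent one in \eqref{seq-lemma3}. Apart from this convergence bookkeeping, the genuinely substantial steps are the diagonalisation-and-Cauchy--Schwarz bridge of~(\ref{itemBYNorm1}) and the tensor-power trace estimate of~(\ref{itemBYNorm4}); everything else is Cauchy--Schwarz over the grading.
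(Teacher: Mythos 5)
Your items (2)--(4) follow essentially the paper's own proofs: (2) is the same reduction to (1); your Cauchy--Bunyakovsky--Schwarz-over-the-grading computation in (3) is exactly the paper's; and your (4), though phrased as additivity of the trace over the grading plus a tensor-power trace estimate, is the same computation the paper performs with its explicit complete $\tilde q$-orthonormal system of rescaled monomials. The genuine divergence is item (1): the paper does not prove it at all, but cites the multilinear Schwartz kernel theorem for nuclear spaces \cite[Lemma~6.1 and Theorem~6.1]{BeShUs96}, whereas you reprove it by hand. Your argument, however, has a gap as stated: a complete $s$-orthonormal system $(e_k)$ of $V$ is only \emph{topologically} total, so the degree-$d$ monomials in the $e_k$ do not algebraically span $S(V)_d$, and ``expanding $a\in S(V)_d$ in the monomial basis'' is not available; nor can you extend the bound from the dense span by continuity, since $\widetilde{s}^{(d)}$-continuity of $L$ is precisely the conclusion. (The appeal to \cite[V, \S 4.8, Theorem~2]{BouTVS} is also dubious in infinite dimensions --- the paper only ever invokes it on finite-dimensional spaces, as in \thref{lem::fund-lem-Umemura} --- but it is in fact superfluous: your computation never uses $r$-orthogonality, only $s$-orthonormality, the hypothesis on $L$, and \eqref{char-trace}.) The gap is fixable in two ways: localize to the finite-dimensional subspace $W$ spanned by the finitely many vectors occurring in a representation of $a$, where monomials do form a basis and where $\tr(r\restriction_W/s\restriction_W)\le\tr(r/s)$ by \thref{prop::trace}-(iii); or approximate the factors of $a$ by elements of $\mathrm{span}(e_k)$ and use the hypothesis itself, together with $r\le\sqrt{\tr(r/s)}\,s$, to pass to the limit inside $L$. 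Note also that your Cauchy--Schwarz yields the constant $\tr(r/s)^{d/2}$ rather than the stated $\tr(r/s)^d$: harmless for every later use, but not literally ``the asserted estimate'' when $\tr(r/s)<1$.

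On item (5) you are, if anything, more careful than the paper. You correctly isolate the real obstacle: condition \eqref{seq-lemma3} only absorbs a constant $c$, so one needs $\tr(q_2/q_{2d})$ uniformly bounded in $d$; the paper's own proof sets $\tilde\eta_d=\eta_d\, q_2'(\ell)^{-d}(1+\tr(q_2/q_{2d}))^{-d}$ and invokes \eqref{seq-lemma3} as if this were of the form $c^{2d}/\eta_d^2$ with $c$ independent of $d$, which is a gap unless the $q_{2d}$ are chosen suitably. Your fix works and can be made precise: fix $q_*\in\P$ with $\tr(q_2/q_*)<\infty$ and, using directedness, choose every $q_{2d}\geq q_*$ in addition to the other requirements; monotonicity of the trace in its second argument (immediate from \thref{prop::trace-HS}, since the Hilbert--Schmidt norm is an ideal norm, so composing with an embedding of operator norm at most one decreases it) gives $\tr(q_2/q_{2d})\le\tr(q_2/q_*)$ for all $d$. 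A uniform bound is all you need; your parenthetical ``$\le 1$ after enlarging'' is not obviously achievable, but also unnecessary. Two final small points: in the direction from $\ell\in V'_{q_2}$ to the $\tilde q$-continuity of $\alpha_\ell$ you must treat $d=0,1$ separately (as the paper does), since $\tr(q_2/q_2)=\infty$ makes item (2) vacuous for $d=1$; and in (4) the $p_{2d}$-orthogonality of $(e_k)$ is again never used, so the infinite-dimensional Bourbaki citation can simply be replaced there by \thref{rem-orth} and \eqref{char-trace}.
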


\proof \

\begin{enumerate}
\item
This is a direct consequence of the multilinear Schwartz kernel theorem for nuclear spaces, see e.g. \cite[Lemma~6.1 and Theorem~6.1]{BeShUs96}.
\item By the $r-$continuity of $\ell$, we obtain that
$
| \alpha_\ell(v_1 \ldots v_d) | \leq r'(\ell)^d r(v_1) \ldots r(v_d)
$ for all $v_1, \ldots , v_d \in V$. Hence, the result directly follows from (\ref{itemBYNorm1}) applied for $L$ replaced with $\alpha_\ell$.

\item

Let $d\in\NN$ and $b:=\sum\limits_{i=1}^N b_{i1}\cdots b_{id}\in S(V)_d$ with $N \in \mathbb{N}$ and $b_{ik} \in V$ for $k=1, \ldots , d$.
Since the assumption (3) of \thref{BKS-thm} holds and $b^2\in S(V)_{2d}$, we have that $L(b^2)\leq C_{2d}\widetilde{p_{2d}}^{(2d)}(b^2)$.
Moreover, since $b^2=\sum\limits_{i=1}^N\sum\limits_{h=1}^N b_{i1}\cdots b_{id}b_{h1}\cdots b_{hd}$, we obtain that

\begin{eqnarray*}
\widetilde{p_{2d}}^{(2d)}(b^2)
&=&\sqrt{\sum\limits_{i=1}^N\sum\limits_{h=1}^N \langle b_{i1}, b_{j1}\rangle_{p_{2d}}\cdots \langle b_{id}, b_{jd}\rangle_{p_{2d}} \sum\limits_{j=1}^N\sum\limits_{k=1}^N\langle b_{h1}, b_{k1}\rangle_{p_{2d}}\cdots\langle b_{hd}, b_{kd}\rangle_{p_{2d}}    } \\
&=&\sqrt{\left(\sum\limits_{i=1}^N\sum\limits_{h=1}^N \langle b_{i1}, b_{j1}\rangle_{p_{2d}}\cdots \langle b_{id}, b_{jd}\rangle_{p_{2d}} \right)^2}\\
&=& \sum\limits_{i=1}^N\sum\limits_{h=1}^N \langle b_{i1}, b_{j1}\rangle_{p_{2d}}\cdots \langle b_{id}, b_{jd}\rangle_{p_{2d}}=\widetilde{p_{2d}}^{(d)}(b)^2
\end{eqnarray*}
Hence, we get that
\begin{equation}\label{useful-ineq}
L(b^2)\leq C_{2d}\widetilde{p_{2d}}^{(2d)}(b^2)\leq C_{2d}\widetilde{p_{2d}}^{(d)}(b)^2, \quad\forall b\in S(V)_d.
\end{equation}

Let $(\lambda_d)_{d\in\NN_0}$ as in~\eqref{seq-lemma} and $a:=\sum_{d=0}^\infty a^{(d)}\in S(V)$. Then there exists $D_a\in\NN$ such that $a^{(d)}=0$ for all $d>D_a$ and so we get that
\begin{eqnarray*}
|L(a)|^2\leq |L(a^2)|\!\!\!\!&\leq& \sum_{d=0}^{D_a} \sum_{j=0}^{D_a} | L(a^{(d)})L(a^{(j)})| \leq\sum_{d=0}^{D_a} \sum_{j=0}^{D_a} \sqrt{ L\left(\left(a^{(d)}\right)^2\right)}\sqrt{ L\left(\left(a^{(j)}\right)^2\right)} \\
&=&\left(\sum_{d=0}^{D_a} \sqrt{ L\left(\left(a^{(d)}\right)^2\right)}\right)^2\\
&\leq& \left(\sum_{d=0}^\infty \lambda_d^{-2} \right)\left(\sum_{d=0}^{D_a} \lambda_d^2 L\left(\left(a^{(d)}\right)^2\right)\right)\\
&\stackrel{\eqref{useful-ineq}}{\leq} &\left(\sum_{d=0}^\infty \lambda_d^{-2} \right) \left(\lambda_0^2 |a^{(0)}|+\sum_{d=1}^{D_a} {\lambda_d^2 C_{2d} \widetilde{p_{2d}}^{(d)}\left(a^{(d)}\right)^2}\right)\\
&=& \left({\sum_{d=0}^\infty \lambda_d^{-2} } \right)\tilde{p}(a)^2.
\end{eqnarray*}

\item
{Since $(V, \tau_V)$ is Hausdorff and separable, we have that for each $d\in\NN$ there exists a complete $q_{2d}-$orthonormal system $E_d$ in $V$ (see \thref{rem-orth}).

Then $\B:=\left\{\frac{1}{\eta_n} e_{i_1}\cdots e_{i_n}  : {n\in \mathbb{N}_0, e_{i_1},\ldots, e_{i_n} \in E_n}\right\}$ is a complete $\tilde{q}-$orthonormal system in $S(V)$} and thus, for any $(\lambda_d)_{d\in\NN_0}$ as in~\eqref{seq-lemma} and $(\eta_d)_{d\in\NN_0}$ as in~\eqref{seq-lemma2}, we obtain that
\begin{eqnarray*}
\tr(\tilde{p}/\tilde{q})&=&\sum_{e\in\B}\tilde{p}(e)^2=\frac{\lambda_0^2}{\eta_0^2}+\sum_{d=1}^\infty \sum_{e_{i_1}, \ldots, e_{i_d} \in E_d } \frac{\lambda_d^2C_{2d}\widetilde{p_{2d}}^{(2d)}(e_{i_1} \cdots e_{i_d})^2}{\eta_d^2} \\
&=&\frac{\lambda_0^2}{\eta_0^2}+ \sum_{d=1}^\infty \sum_{e_{i_1}, \ldots, e_{i_d}\in E_d } \frac{\lambda_d^2C_{2d} p_{2d}(e_{i_1})^2 \cdots p_{2d}(e_{i_d})^2}{\eta_d^2} \\
&=& \frac{\lambda_0^2}{\eta_0^2}+\sum_{d=1}^\infty \frac{\lambda_d^2}{\eta_d^2} C_{2d}\sum_{e_{i_1}\in E_d } p_{2d}(e_{i_1})^2 \cdots \sum_{e_{i_d}\in E_d } p_{2d}(e_{i_d})^2 \\
&=&\frac{\lambda_0^2}{\eta_0^2}+ \sum_{d=1}^\infty \frac{\lambda_d^2}{\eta_d^2} C_{2d}\tr(p_{2d}/q_{2d})^d<\infty.
\end{eqnarray*}
Hence, $\tr(\tilde{p}/\tilde{q}) < \infty$.
\item Let us first show why the existence of a seminorm $q_{2d}$ with the properties as in the statement is guaranteed by the nuclearity of $V$. As $\P$ is directed, for each $d \geq 2$ there exists a seminorm $r_{2d} \in \P$ such that $p_{2d} \leq r_{2d}$ and $q_2 \leq r_{2d}$. Then, by the nuclearity of $V$, we can choose a $q_{2d} \in \P$ such that $\mathrm{tr}(r_{2d}/q_{2d}) <\infty$ and hence, by definition of trace,
$\mathrm{tr}(p_{2d}/q_{2d}) <\infty$ and $\mathrm{tr}(q_{2}/q_{2d}) <\infty$ for all $d \geq 2.$\\
\noindent Let $\ell$ be $q_2$-continuous. Then, for any $d \geq 2$, we get that $$
| \alpha_\ell((a^{(d)})^2) | \stackrel{(\ref{itemBYNorm2})}{\leq} \left( q_2'(\ell) \mathrm{tr}(q_2/q_{2d}) \right)^{2d} \widetilde{q_{2d}}^{(2d)}((a^{(d)})^2), \quad\forall a^{(d)} \in S(V)_d,
$$
while, for $d=1$, we have that $$| \alpha_\ell((a^{(1)})^2) | = \ell(a^{(1)})^2 \leq q_2'(\ell) q_2(a^{(1)})^2, \quad \forall\ a^{(1)} \in S(V)_1 =V.$$

\noindent Moreover, arguing as in (\ref{itemBYNorm3}), it is easy to see that for all $d\in\NN$
$$\widetilde{q_{2d}}^{(2d)}((a^{(d)})^2) = \widetilde{q_{2d}}^{(2d)}(a^{(d)})^2,\quad \forall a^{(d)} \in S(V)_d.$$

\noindent Now, for any $a:=\sum_{d=0}^\infty a^{(d)}\in S(V)$, there exists $D_a\in\NN$ such that $a^{(d)}=0$ for all $d>D_a$. Thus, setting $\tilde{\eta}_d :=  \eta_d   q_2'(\ell)^{-d} \left( 1+  \mathrm{tr}(q_2/q_{2d}) \right)^{-d} $ for all $d\in\NN_0$ and exploiting the previous three inequalities, we get that
\begin{eqnarray*}
| \alpha_\ell(a)|^2 &\leq& | \alpha_\ell(a^2)|
\leq \left(\sum_{d=0}^\infty \tilde{\eta}_d^{-2} \right)\left(\sum_{d=0}^{D_a} \tilde{\eta}_d^2\ \alpha_\ell\!\left(\left(a^{(d)}\right)^2\right)\right)\\
&\leq &\left(\sum_{d=0}^\infty  \tilde{\eta}_d^{-2} \right) \left( \tilde{\eta}_0^2 |a^{(0)}|+  \tilde{\eta}_1^2 q_2'(\ell)^2 q_2(a^{(1)})^2 + \sum_{d=2}^{D_a} { \tilde{\eta}_d^2 \left( q_2'(\ell) \mathrm{tr}(q_2/q_{2d}) \right)^{2d}\  \widetilde{q_{2d}}^{(d)}\!\left(a^{(d)}\right)^2}\right)\\
&\leq &\left(\sum_{d=0}^\infty  \tilde{\eta}_d^{-2} \right)  \tilde{q}(a)^2,
\end{eqnarray*}
which provides the $\tilde{q}-$continuity of $\alpha_\ell$ since $\left(\sum_{d=0}^\infty  \tilde{\eta}_d^{-2} \right)<\infty$ by \eqref{seq-lemma3}.

\noindent Conversely, if $\alpha_\ell$ is $\tilde{q}-$continuous, then there exists $C\geq0$ such that
$$
| \ell(v)| = |\alpha_\ell( v)| \leq C \tilde{q}(v) = C \eta_1 q_2(v),\quad  \forall v\in V.
$$
\end{enumerate}
\endproof

\begin{proof}[Proof of \thref{BKS-thm}]\ \\
Let $I:=\{\langle W\rangle: W\text{ finite dimensional\ subspace of }span(E)\}$.

Recall that $(X(S(V)),\tau_{X(S(V))})$ is isomorphic to $V^\ast$ equipped with the weak topology. Then, by the generalization of the classical Nussbaum theorem to any finitely generated algebra (see e.g. \cite[Theorem~3.16]{InKuKuMi22}), the assumptions (1) and (2) ensure that for each $S\in I$ there exists a unique $K_{Q\cap S}$--representing measure $\nu_S$ for $L\!\restriction_S$. Moreover, the separability and the nuclearity of $(V, \tau_V)$ as well as the assumptions (1) and (3) ensure that we can apply \thref{lemma-aux} and get two Hilbertian seminorms $\tilde{p}$ and $\tilde{q}$ on $S(V)$ such that $\tr(\tilde{p}/\tilde{q})<\infty$ and $L(a^2)\leq \left({\sum_{d=0}^\infty \lambda_d^{-2} } \right)\tilde{p}(a)^2$ for all $a\in S(V)$. Thus, by Lemma \ref{lem-suff-conc}, $\{\nu_S:S\in I\}$ is \ $\tilde{p}$-concentrated.

Also the density of $span(E)$ in $(V, \tau_V)$ given by assumption (2) implies the density of $S(span(E))$ in $(S(V), \tilde{q})$. Then, by \thref{lemma-aux}-(\ref{itemBYNorm5}) we have that $\sp(\tilde{q})$ is isomorphic to $V_{q_2}^\prime$. Thus, exploiting also the assumption (4), the conclusion follows by applying \thref{MainThm-dense-supp} to $A:=S(V)$, $q=\tilde{q}$, $B:=S(span(E))$, and $p=\tilde{p}$.
\end{proof}

{We can retrieve \cite[Chapter~5, Theorem 2.1]{BeKo88} from \thref{BKS-thm}, because their definition of nuclear space $(V, \tau)$ is covered by \thref{def::nuclear-space} (for more details see \thref{bk-nucl}), their regularity assumption on the starting sequence \cite[Chapter 5, Section 2.1, p.52]{BeKo88} corresponds to \thref{BKS-thm}-(3), their positivity assumption \cite[Chapter 5, (2.1)]{BeKo88} is equivalent \thref{BKS-thm}-(1), and those together with their growth condition in \cite[Chapter 5, (2.5)]{BeKo88} imply that \thref{BKS-thm}-(2) holds. For the convenience of the reader, we restate their growth condition in our setting
\begin{eqnarray}\label{cond-BK}
& \ & \exists\ E\subset V \text{countable} \text{ s.t.\! $span(E)$ is dense in $(V, \tau)$ and $C\{ z_k\}$ is quasi-analytic,}\\
& & \text{where } z_k:=\left(\sup_{v\in E} p_{2k}(v)\right)^k\sqrt{\sup_{v_1, \ldots, v_{2k}\in V}\left(\frac{|L(v_1\cdots v_{2k})|}{\widetilde{p_{2k}}^{(2k)}(v_1\cdots v_{2k})}\right)} \quad \forall k\in\NN,\nonumber\\
& &  \text{$p_{2k}$ and $\widetilde{p_{2k}}^{(2k)}$ are as in \thref{BKS-thm}-(3).\nonumber}
\end{eqnarray}
and prove in detail the above mentioned implication.}

\begin{prop}\thlabel{lemma-Carleman-span}
Let $(V,\tau_V)$ be a separable nuclear space with $\tau_V$ induced by a directed family of seminorms $\P$ on $V$, $L\colon S(V)\to\RR$ linear such that (1) and (3) of \thref{BKS-thm} hold. If \eqref{cond-BK} is fulfilled, then \thref{BKS-thm}-(2) holds.
\end{prop}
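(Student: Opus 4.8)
The plan is to work with the \emph{same} countable set $E\subseteq V$ furnished by \eqref{cond-BK} and to verify \thref{BKS-thm}-(2) directly for every $v\in span(E)$. First note that the series in \thref{BKS-thm}-(2) is well posed: since $Q$ is a quadratic module we have $\sum S(V)^2\subseteq Q$, so $v^{2k}=(v^k)^2\in Q$ and hence, by assumption (1) of \thref{BKS-thm}, $L(v^{2k})\geq 0$ for every $k$. In particular $L$ is $\sum S(V)^2$--positive, so the Cauchy--Bunyakovsky--Schwarz inequality is available. The whole argument reduces to the single estimate
\[
L(v^{2k})\ \leq\ \|c\|_1^{2k}\,z_k^{2}\qquad(k\in\NN),
\]
for $v=\sum_{i=1}^m c_i e_i$ with $e_i\in E$ and $\|c\|_1:=\sum_{i=1}^m|c_i|$, where $z_k$ is as in \eqref{cond-BK}.

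To obtain this estimate I would expand by the multinomial theorem, $v^{2k}=\sum_{|\alpha|=2k}\binom{2k}{\alpha}c^{\alpha}e^{\alpha}$, the sum running over multi-indices $\alpha\in\NN_0^m$ with $|\alpha|=2k$, where $e^{\alpha}:=e_1^{\alpha_1}\cdots e_m^{\alpha_m}\in S(V)_{2k}$ and $c^{\alpha}:=c_1^{\alpha_1}\cdots c_m^{\alpha_m}$. Evaluating $\widetilde{p_{2k}}^{(2k)}$ on a single monomial directly from its definition in \thref{BKS-thm} gives $\widetilde{p_{2k}}^{(2k)}(e^{\alpha})=\prod_{i=1}^m p_{2k}(e_i)^{\alpha_i}$ (this is the $N=1$ case of the computation already carried out in the proof of \thref{lemma-aux}-(\ref{itemBYNorm3})), so assumption (3) of \thref{BKS-thm} yields
\[
|L(e^{\alpha})|\ \leq\ C_{L,2k}\prod_{i=1}^m p_{2k}(e_i)^{\alpha_i}\ \leq\ C_{L,2k}\Big(\sup_{w\in E}p_{2k}(w)\Big)^{2k},
\]
where $C_{L,2k}$ is the operator norm of $L$ on $(S(V)_{2k},\widetilde{p_{2k}}^{(2k)})$, i.e. precisely the quantity under the square root in the definition of $z_k$. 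Summing $|L(e^{\alpha})|$ against $\binom{2k}{\alpha}|c^{\alpha}|$ and using the identity $\sum_{|\alpha|=2k}\binom{2k}{\alpha}|c^{\alpha}|=\|c\|_1^{2k}$ then produces $L(v^{2k})\leq \|c\|_1^{2k}C_{L,2k}(\sup_{w\in E}p_{2k}(w))^{2k}=\|c\|_1^{2k}z_k^{2}$, which is the claim. Taking square roots gives $\sqrt{L(v^{2k})}\leq \|c\|_1^{k}z_k$ for all $k$.

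It then remains to convert this domination into the Carleman divergence of \thref{BKS-thm}-(2). Write $w_k:=\sqrt{L(v^{2k})}$. If the quasi-analyticity of $C\{z_k\}$ in \eqref{cond-BK} is read as the divergence $\sum_k z_k^{-1/k}=\infty$, the conclusion is immediate, since $w_k^{-1/k}=(L(v^{2k}))^{-1/(2k)}\geq \|c\|_1^{-1}z_k^{-1/k}$ forces $\sum_k (L(v^{2k}))^{-1/(2k)}=\infty$. Reading it instead as the classical (function--theoretic) quasi-analyticity of the Carleman class, I would argue that the inclusion $C\{w_k\}\subseteq C\{z_k\}$ implied by $w_k\leq \|c\|_1^{k}z_k$ transfers quasi-analyticity to $C\{w_k\}$, and then invoke the Denjoy--Carleman theorem for $C\{w_k\}$. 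The key point here is that, by Cauchy--Bunyakovsky--Schwarz, the even--moment sequence $\{L(v^{2k})\}_k$ is log--convex with $L(1)=1$; hence $\{w_k\}$ is log--convex with $w_0=1$, the sequence $k\mapsto w_k^{1/k}$ is non--decreasing, and the Denjoy--Carleman criterion collapses to exactly $\sum_k w_k^{-1/k}=\sum_k (L(v^{2k}))^{-1/(2k)}=\infty$. The degenerate cases $v=0$ and $w_k=0$ for some $k$ (where the $k$-th term is $+\infty$ and the series diverges trivially) are disposed of separately.

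The step I expect to require the most care is this last conversion. In general the Denjoy--Carleman criterion for quasi-analyticity of $C\{m_k\}$ involves $\inf_{j\geq k}m_j^{1/j}$ rather than $m_k^{1/k}$, and it is only the log--convexity of the moment sequence $\{L(v^{2k})\}_k$ that identifies the two; routing the comparison through $\{w_k\}$, which is automatically log--convex, rather than through $\{z_k\}$, which need not be, is precisely what legitimises the use of monotonicity of quasi-analytic classes.
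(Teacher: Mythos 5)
Your proof is correct, and it takes a genuinely different route from the paper's. The paper never expands $v^{2k}$: it introduces the intermediate sequence $m_k:=\sqrt{\sup_{v_1,\dots,v_{2k}\in E}|L(v_1\cdots v_{2k})|}$, notes $m_k\le z_k$, and proves the pointwise bound $\sqrt[2k]{L(v^{2k})}\le K_v\sqrt[2k]{m_{2k}}$ by a dyadic interpolation ($2^d\le 2k\le 2^{d+1}$) resting on two imported facts, namely that $a\mapsto\sqrt[2^{d}]{L(a^{2^{d}})}$ is a seminorm on $S(V)$ (\cite[Remark~3.2-(ii)]{TAMP-cmp}) and that $(\sqrt[k]{m_k})_k$ is increasing (\cite[Corollary~4.2]{In16}); it then transports quasi-analyticity from $\C\{z_k\}$ to $\C\{m_k\}$ and, via log-convexity of $(m_k)$ and \cite[Lemma~4.3]{In16}, to $\C\{\sqrt{m_{2k}}\}$, and finally applies Denjoy--Carleman to get $\sum_k 1/\sqrt[2k]{m_{2k}}=\infty$. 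You instead compare the moment sequence $w_k=\sqrt{L(v^{2k})}$ with $z_k$ at the \emph{same} index, via the multinomial expansion and the evaluation of $\widetilde{p_{2k}}^{(2k)}$ on monomials, obtaining $w_k\le\|c\|_1^{k}z_k$; since geometric factors do not alter a Carleman class, $\C\{w_k\}\subseteq\C\{z_k\}$, so $\C\{w_k\}$ inherits quasi-analyticity, and since $(w_k)$ is log-convex with $w_0=1$ (Cauchy--Bunyakovsky--Schwarz, available because $\sum S(V)^2\subseteq Q$ and $L(Q)\subseteq[0,\infty)$), Denjoy--Carleman applies in its simple form and yields exactly $\sum_k w_k^{-1/k}=\infty$, i.e.\ \thref{BKS-thm}-(2). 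Your route is shorter: the multinomial bound replaces the dyadic interpolation, and log-convexity of the moment sequence itself does the work of \cite[Lemma~4.3]{In16}, so all three external citations disappear. In fact your expansion even gives the cleaner bound $w_k\le\|c\|_1^{k}m_k$ (bound each $|L(e^\alpha)|$ by $m_k^2$ instead of going through the seminorms), which makes explicit what the paper's scheme isolates as its reusable core --- that quasi-analyticity of $\C\{m_k\}$ alone implies \thref{BKS-thm}-(2), the seminorms and $z_k$ entering only through the one-line estimate $m_k\le z_k$.

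Two minor repairs. First, you identify the continuity constant $C_{L,2k}$ of assumption (3) --- an operator norm on $(S(V)_{2k},\widetilde{p_{2k}}^{(2k)})$ --- with the supremum over monomials appearing in $z_k$; this identification is not justified, since the operator norm may strictly exceed the supremum of $|L|/\widetilde{p_{2k}}^{(2k)}$ over product vectors. It is also unnecessary: you only apply the estimate to monomials $e^\alpha$, and for those $|L(e^\alpha)|\le M_k\,\widetilde{p_{2k}}^{(2k)}(e^\alpha)$ holds by the very definition of the supremum $M_k:=\sup_{v_1,\dots,v_{2k}\in V}|L(v_1\cdots v_{2k})|/\widetilde{p_{2k}}^{(2k)}(v_1\cdots v_{2k})$ occurring in \eqref{cond-BK}, with assumption (3) covering the degenerate case $\widetilde{p_{2k}}^{(2k)}(e^\alpha)=0$, where it forces $L(e^\alpha)=0$; so phrase the bound with $M_k$, not $C_{L,2k}$. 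Second, of your two readings of \eqref{cond-BK}, only the second is admissible: quasi-analyticity of $\C\{z_k\}$ is the class-theoretic notion, and since $(z_k)$ is not assumed log-convex it does not entail $\sum_k z_k^{-1/k}=\infty$ (that divergence is the \emph{stronger} hypothesis); your argument through the log-convex sequence $(w_k)$ is the one that proves the proposition, and it is complete.
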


\begin{proof}
{Let us preliminarily observe that for each $k\in\NN$ we have
\begin{equation}\label{ineq-definit}
m_k := \sqrt{\sup_{v_1, \dots ,v_{2k} \in E} |L(v_1\dots v_{2k})|}\leq z_k.
\end{equation}}
Moreover, the following properties hold:
\begin{enumerate}[label = (\alph*)]
\item $(m_k)_{k\in\NN}$ is log-convex as $L(ab)^2\leq L(a^2)L(b^2)$ for all $a,b\in S(V)$.
\item for each $a\in S(V)$ the sequence $(\sqrt{L(a^{2k})})_{k\in\NN}$ is increasing (by repeated applications of the Cauchy-Schwartz inequality).
\item for each $a\in S(V)$ and $k\in\NN$, the map $a\mapsto \sqrt[2^k]{L(a^{2^k})}$ defines a semi-norm on $S(V)$ (by \cite[Remark 3.2-(ii)]{TAMP-cmp} )
\item for each $f\in E$ and $k\in\NN$, $\sqrt{L(f^{2k})}\leq m_k$ (by definition of $m_k$).
\end{enumerate}

Fix $k\in\NN$ and $v=\sum_{i=1}^l\lambda_iv_i$ with $v_i\in E$ and $\lambda_i\in\RR$. Let us choose $d\in\NN$ such that $2^d\leq 2k\leq 2^{d+1}$. Then
\begin{eqnarray}\label{bound-2k}
\sqrt[2k]{L(v^{2k})}&\stackrel{(b)}{\leq}&\sqrt[2^{d+1}]{L(v^{2^{d+1}})}
\stackrel{(c)}{\leq}\sum_{i=1}^l\left|\lambda_i\right|\sqrt[2^{d+1}]{L(v_i^{2^{d+1}})}
\leq\sum_{i=1}^l\left|\lambda_i\right|\sqrt[2^d]{\sqrt{L(v_i^{2\cdot 2^d})}}\nonumber\\
&\stackrel{(d)}{\leq}&\sum_{i=1}^l\left|\lambda_i\right|\sqrt[2^d]{m_{2^d}}
\leq\left(1+\sum_{i=1}^l\left|\lambda_i\right|\right)\sqrt[2^d]{m_{2^d}}
\leq K_v\sqrt[2k]{m_{2k}}.
\end{eqnarray}
where in the last inequality we used that $(\sqrt[k]{m_k})_{k\in\NN}$ is increasing, see e.g. \cite[Corollary~4.2]{In16} and $K_v:=1+\sum_{i=1}^l\left|\lambda_i\right|>0$.

Since the class $\C\{{z_{k}}\}$ is quasi-analytic and \eqref{ineq-definit} holds, also $\C\{{m_{k}}\}$. This together with the property (a) ensures that $\C\{\sqrt{m_{2k}}\}$ is quasi-analytic, see e.g. \cite[Lemma 4.3]{In16}. Hence, by the Denjoy-Carleman theorem, $\sum_{k=1}^\infty\frac{1}{\sqrt[2k]{m_{2k}}}=\infty$ holds. This combined with \eqref{bound-2k} provides the conclusion.
\end{proof}


\section{Appendix}\label{sec:appendix}

In the following we first explain the relation between the notion of trace of a Hilbertian seminorm w.r.t. to another and the classical definition of trace of a positive continuous operator on a Hilbert space. Then we compare the definition of nuclear space used in this article due to Yamasaki \cite{Ya85} with the more traditional ones due to Grothendieck \cite{Gr55} and Mityagin \cite{Mi61}, and with the definitions of this concept given by Berezansky and Kondratiev in \cite[p.~14]{BeKo88} and by Schm\"udgen in \cite[p.~445]{Smu17}, whose results we compared to ours in Section \ref{Sec:3}. Finally, we provide a complete proof of the measure theoretical identity \eqref{eq-sigma-algebras}, which we exploited in the proof of \thref{MainThm-supp_2}.

\subsection{Trace of positive continuous operators on Hilbert spaces}\label{sec: app-trace}\ \\
Let us start by recalling the definition of trace of a positive continuous operator on a Hilbert space, which we also denote with the symbol $\mathrm{tr}$.

\begin{dfn}[cf.\ {\cite[V.50, (24')]{BouTVS}}]\thlabel{def::op-trace}
Given a Hilbert space $(H,\langle\cdot, \cdot\rangle)$, the \emph{trace} of a continuous and positive operator $f\colon H\to H$ is defined as
\begin{equation}\label{eq::trace1}
\mathrm{tr}(f):=\sup_{e_1,\ldots,e_n}\sum_{i=1}^n\langle e_i,f(e_i)\rangle,
\end{equation}
where $n$ ranges over $\NN$ and $e_1,\ldots,e_n\in H$ ranges over the set of all finite sequences that are orthonormal w.r.t.\ $\langle\cdot,\cdot\rangle$.
\end{dfn}
In fact, by \cite[V.48, Lemma~2]{BouTVS}, we have that for every complete orthonormal system $\{e_i:i\in\Omega\}$ in $H$ the following holds
\begin{equation}\label{eq::trace2}
\mathrm{tr}(f)=\sum_{i\in\Omega}\langle e_i, f(e_i)\rangle.
\end{equation}
If $D\subseteq H$ is dense, then there exists a complete orthonormal system in $H$ that is contained in $D$. Therefore, in \eqref{eq::trace1} it suffices to let $e_1,\ldots,e_n\in H$ range over the set of all finite sequences in $D$ that are orthonormal w.r.t.\ $\langle\cdot,\cdot\rangle$.\\

For the convenience of the reader, we also recall here some fundamental classes of operators that will be needed in showing the relation between traces mentioned above.
\begin{dfn}\thlabel{def::trace-class}
Given a Hilbert space $(H,\langle\cdot, \cdot\rangle)$, we say that a bounded linear operator $f:H\to H$ is \emph{trace-class} if $tr(\sqrt{f^*f})<\infty$, where $f^\ast$ denotes the adjoint of $f$. The positive bounded operator $\sqrt{f^*f}$ is called \emph{absolute value} of $f$.
\end{dfn}

\begin{dfn}\thlabel{def::HS+nucl}
Given two Hilbert spaces $(H_1,p_1)$ and $(H_2,p_2)$, we say that a continuous operator $f: H_1\to H_2$ is
\begin{enumerate}
\item \emph{Hilbert-Schmidt (or quasi-nuclear)} if $\mathrm{tr}(f^*f)<\infty$.
\item \emph{nuclear} if there exist $(v_n)_{n\in\NN}\subseteq H_1$ and $(w_n)_{n\in\NN}\subseteq H_2$ such that
$$
\sum_{n=1}^\infty p_1(v_n)p_2(w_n)<\infty\quad\text{and}\quad f(\cdot)=\sum_{n=1}^\infty\langle\cdot, v_n\rangle_{p_1}w_n.
$$
\end{enumerate}
\end{dfn}
Note that $(v_n)_{n\in\NN}\subseteq (H_1,p_1)$ and $(w_n)_{n\in\NN}\subseteq (H_2,p_2)$ can be chosen to be orthogonal (see, e.g., \cite[Corollary, p.~494]{Tr67}).

\begin{prop}\thlabel{restr-nuclear}
Let $f\colon (H_1,p_1)\to (H_2,p_2)$ be a nuclear operator. If $H\subseteq H_1$ closed, then $f\!\restriction_H\colon H\to\overline{f(H)}$ is also nuclear.
\end{prop}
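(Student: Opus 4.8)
The plan is to produce an explicit nuclear representation of $f\restriction_H$ by pushing the given representation of $f$ through the orthogonal projections onto the two relevant closed subspaces. Since $(H_1,p_1)$ and $(H_2,p_2)$ are Hilbert spaces and both $H\subseteq H_1$ and $\overline{f(H)}\subseteq H_2$ are closed subspaces, I would first introduce the orthogonal projections $P\colon H_1\to H$ and $Q\colon H_2\to\overline{f(H)}$; these exist precisely because we are in the Hilbert setting and the subspaces are closed. By \thref{def::HS+nucl}, nuclearity of $f$ gives sequences $(v_n)_{n\in\NN}\subseteq H_1$ and $(w_n)_{n\in\NN}\subseteq H_2$ with $\sum_{n=1}^\infty p_1(v_n)p_2(w_n)<\infty$ and $f(\cdot)=\sum_{n=1}^\infty\langle\cdot,v_n\rangle_{p_1}w_n$. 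This representation is the only input I need.

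Next I would set $\tilde v_n:=Pv_n\in H$ and $\tilde w_n:=Qw_n\in\overline{f(H)}$ and check that these still satisfy the summability condition. Since orthogonal projections are contractions, $p_1(\tilde v_n)\leq p_1(v_n)$ and $p_2(\tilde w_n)\leq p_2(w_n)$, whence $\sum_{n=1}^\infty p_1(\tilde v_n)p_2(\tilde w_n)\leq\sum_{n=1}^\infty p_1(v_n)p_2(w_n)<\infty$. The key algebraic observation is that replacing $v_n$ by its projection does not change the coefficients along $H$: for every $x\in H$, self-adjointness of $P$ together with $Px=x$ yields $\langle x,v_n\rangle_{p_1}=\langle Px,v_n\rangle_{p_1}=\langle x,Pv_n\rangle_{p_1}=\langle x,\tilde v_n\rangle_{p_1}$. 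Therefore $f\restriction_H(x)=\sum_{n=1}^\infty\langle x,\tilde v_n\rangle_{p_1}w_n$ for all $x\in H$, where the $\tilde v_n$ now lie in $H$ as required.

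Finally I would replace each $w_n$ by $\tilde w_n$ on the image side. For $x\in H$ the vector $f(x)$ lies in $f(H)\subseteq\overline{f(H)}$, so $Qf(x)=f(x)$; applying $Q$ to both sides of the representation and moving it inside the sum gives $f\restriction_H(x)=\sum_{n=1}^\infty\langle x,\tilde v_n\rangle_{p_1}\tilde w_n$, which is a nuclear representation of $f\restriction_H$ as an operator $H\to\overline{f(H)}$, completing the proof. The only step that is not purely formal is this interchange of the bounded operator $Q$ with the infinite series, and I would justify it by absolute convergence: Cauchy--Schwarz gives $\sum_{n}|\langle x,\tilde v_n\rangle_{p_1}|\,p_2(w_n)\leq p_1(x)\sum_{n}p_1(\tilde v_n)p_2(w_n)<\infty$, so the series converges in norm and the continuous $Q$ may be passed through it termwise. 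Thus the ``hard part'' is merely this convergence bookkeeping; the substance of the argument is the pair of projections, and no further structural input is needed.
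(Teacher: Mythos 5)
Your proof is correct, and on the domain side it is the same argument as the paper's: the paper writes $v_n=x_n+y_n$ with $x_n\in H$, $y_n\in H^\perp$, which is precisely your $\tilde v_n=Pv_n$, and it uses the same contraction estimate $p_1(x_n)\leq p_1(v_n)$ to preserve the summability $\sum_{n}p_1(x_n)p_2(w_n)<\infty$. The genuine difference is how the two arguments get the image vectors into $\overline{f(H)}$. The paper first invokes the refinement stated after \thref{def::HS+nucl} (via Tr\`eves) that $(v_n)_{n\in\NN}$ and $(w_n)_{n\in\NN}$ may be chosen orthogonal, and then deduces from $f(v_n)=\langle v_n,v_n\rangle_{p_1}w_n$ that $(w_n)_{n\in\NN}\subseteq\overline{f(H)}$, so that the original $w_n$ can be kept unchanged. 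That deduction, however, only places $w_n$ in $\overline{f(H_1)}$, and for a proper closed subspace $H$ it can fail outright: take $f$ the identity of $\RR^2$ with $v_n=w_n=e_n$ the standard basis vectors and $H=\mathrm{span}(e_1+e_2)$; then $\overline{f(H)}=H$ while $w_1=e_1\notin H$, even though $Pv_1\neq 0$. Your composition with the orthogonal projection $Q$ onto $\overline{f(H)}$---legitimate since the series converges in norm and $Q$ is continuous and linear, so it may be applied termwise (your absolute-convergence estimate is even more than is needed)---repairs exactly this point: in the example it replaces $e_1,e_2$ by $\tfrac{1}{2}(e_1+e_2)$, which does give a valid nuclear representation of $f\!\restriction_H$ into $\overline{f(H)}$, and the contraction property $p_2(Qw_n)\leq p_2(w_n)$ keeps the summability. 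As a further bonus, your argument never needs the orthogonality of $(v_n)_{n\in\NN}$ and $(w_n)_{n\in\NN}$ at all, so it is both more self-contained than the printed proof and closes a real gap in it.
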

\begin{proof}
Since $f$ is nuclear, there exists $(v_n)_{n\in\NN}\subseteq (H_1,p_1)$ and $(w_n)_{n\in\NN}\subseteq (H_2,p_2)$ orthogonal such that $f(\cdot)=\sum_{n=1}^\infty\langle\cdot, v_n\rangle_{p_1}w_n$ and $\sum_{n=1}^\infty p_1(v_n)p_2(w_n)<\infty$. Then $f(v_n)=\langle v_n, v_n\rangle_{p_1}w_n$ for all $n\in\NN$, since $(v_n)_{n\in\NN}\subseteq (H_1,p_1)$ is orthogonal, and so $(w_n)_{n\in\NN}\subseteq\overline{f(H)}$. Furthermore, for each $n\in\NN$ there exist $x_n\in H, y_n\in H^\perp$ such that $v_n=x_n+y_n$. Thus,
$$
f(x)=\sum_{n=1}^\infty\langle x, x_n+y_n\rangle_{p_1}w_n=\sum_{n=1}^\infty\langle x, x_n\rangle_{p_1}w_n\quad\text{for all }x\in H.
$$
Moreover, $\langle x_n,y_n\rangle_{p_1}=0$ implies that $p_1(x_n)\leq p_1(x_n+y_n)=p_1(v_n)$ for all $n\in\NN$ and hence, $\sum_{n=1}^\infty p_1(x_n)p_2(w_n)\leq\sum_{n=1}^\infty p_1(v_n)p_2(w_n)<\infty$.
\end{proof}

We are ready now to relate \thref{def::trace} and \thref{def::op-trace}.

\begin{rem}\thlabel{rem::hilbertian}
A Hilbertian seminorm $p$ on a real vector space $V$ can be always used to construct a Hilbert space out of~$V$. Indeed, $p$ induces a seminorm on $V_p:=V/\ker(p)$ given by $v+\ker(p)\mapsto p(v)$ and denoted, with a slight abuse of notation, also by $p$. Thus, $(V_p, p)$ is a pre-Hilbert space, as $p$ clearly induces an inner product on $V_p$. Now, $V_p$ is dense in the completion $\overline{V}_p$ of $(V_p, p)$ and so $p$ extends to a norm $\overline{p}$ on $\overline{V}_p$ which makes $(\overline{V}_p,\overline{p})$ a Hilbert space. \par\medskip
\end{rem}

\begin{prop}\thlabel{prop::trace-HS}\ \\
Let $p$ and $q$ be two Hilbertian seminorms on a real vector space $V$.
\begin{enumerate}
\item If $\ker(q)\subseteq\ker(p)$ then $u\colon V_q\to V_p, v+\ker(q)\mapsto v+\ker(p)$ is well-defined. Note that $u$ is injective iff $\ker(q)=\ker(p)$.
\item If there exists $C>0$ such that $p\leq C q$, then $u$ is continuous and uniquely continuously extends to $\overline{u}\colon(\overline{V}_q,\overline{q})\to(\overline{V}_p,\overline{p})$. Moreover, $\overline{u}$ is injective iff for any Cauchy sequence $(v_n)$ in $V_q$ s.t. $u(v_n)$ converges to $0$ in $p$ we have that $v_n$ converges to $0$  in $q$.
\item $\mathrm{tr}(p/q)<\infty$ if and only if $\overline{u}$ is Hilbert-Schmidt, i.e., $\mathrm{tr}(\overline{u}^\ast\overline{u})<\infty$, where $\overline{u}^\ast$ denotes the adjoint of $\overline{u}$.
\end{enumerate}
\end{prop}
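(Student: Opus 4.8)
The plan is to treat the three items in order, with parts (1) and (2) handled by direct verification and part (3) reduced to the single operator-trace identity $\mathrm{tr}(\overline{u}^\ast\overline{u})=\mathrm{tr}(p/q)$. For (1), the map $u$ is well-defined and linear exactly because $\ker(q)\subseteq\ker(p)$: if $v-w\in\ker(q)$ then $v-w\in\ker(p)$, so the value is independent of the chosen representative. Its kernel is $\{v+\ker(q):v\in\ker(p)\}$, which is trivial precisely when $\ker(p)\subseteq\ker(q)$; combined with the standing inclusion this gives injectivity iff $\ker(q)=\ker(p)$. For (2), from $p\leq Cq$ I would read off $\overline{p}(u(v+\ker(q)))=p(v)\leq Cq(v)=C\,\overline{q}(v+\ker(q))$, so $u$ is bounded with norm $\leq C$; since $V_q$ is dense in its completion $\overline{V}_q$ and $\overline{V}_p$ is complete, the bounded linear extension theorem furnishes the unique continuous $\overline{u}$. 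For the injectivity criterion I would write a general element of $\overline{V}_q$ as $\lim(v_n+\ker(q))$ with $(v_n)$ being $q$-Cauchy; then $\overline{u}$ sends it to $\lim(v_n+\ker(p))$, which vanishes iff $p(v_n)\to0$, whereas the element itself vanishes iff $q(v_n)\to0$, which is exactly the stated condition.

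The substance is in (3). First I would observe that under either hypothesis $\overline{u}$ is defined and $\ker(q)\subseteq\ker(p)$: if $\mathrm{tr}(p/q)<\infty$ then by \thref{prop::trace}-(i) one has $p\leq\sqrt{\mathrm{tr}(p/q)}\,q$, while if $\overline{u}$ is Hilbert--Schmidt it is in particular bounded, whence $p\leq\|\overline{u}\|\,q$; in both cases part (2) applies. Now $\overline{u}^\ast\overline{u}$ is a positive bounded self-adjoint operator on $\overline{V}_q$, so by \eqref{eq::trace2} together with the remark following \thref{def::op-trace} (that the trace may be computed over orthonormal families drawn from the dense subspace $V_q$), it suffices to evaluate the defining sum on families $\{e_1+\ker(q),\dots,e_n+\ker(q)\}$ with $\{e_1,\dots,e_n\}\in\mathrm{FON}(q)$. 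For each such family,
\[
\sum_{i=1}^n\langle e_i+\ker(q),\overline{u}^\ast\overline{u}(e_i+\ker(q))\rangle_{\overline{q}}=\sum_{i=1}^n\overline{p}(\overline{u}(e_i+\ker(q)))^2=\sum_{i=1}^np(e_i)^2.
\]
Taking the supremum over all such families and comparing with \thref{def::trace} gives $\mathrm{tr}(\overline{u}^\ast\overline{u})=\mathrm{tr}(p/q)$; hence $\mathrm{tr}(p/q)<\infty$ if and only if $\mathrm{tr}(\overline{u}^\ast\overline{u})<\infty$, which is precisely the Hilbert--Schmidt condition in \thref{def::HS+nucl}.

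The only genuinely delicate point, and the step I expect to be the main obstacle, is the exact correspondence between the index set of the operator trace and $\mathrm{FON}(q)$: I must verify that finite $q$-orthonormal subsets of $V$ are exactly the lifts of finite orthonormal families in $V_q$ (distinctness of cosets follows from $q(e_i-e_j)^2=2$ for $i\neq j$), and that the supremum defining $\mathrm{tr}(\overline{u}^\ast\overline{u})$ may legitimately be restricted to families coming from the dense subspace $V_q$ rather than ranging over all orthonormal families in $\overline{V}_q$. This restriction is exactly what the remark after \thref{def::op-trace} licenses, so once it is invoked no further completeness or approximation argument is needed and the identity falls out term by term.
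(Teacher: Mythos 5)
Your proposal is correct and takes essentially the same route as the paper: parts (1) and (2) by the same direct verifications (your Cauchy-sequence characterization of injectivity of $\overline{u}$ is the paper's argument in compressed form), and part (3) by establishing the identity $\mathrm{tr}(\overline{u}^\ast\overline{u})=\mathrm{tr}(p/q)$ through evaluating the operator trace on finite orthonormal families drawn from the dense subspace $V_q$, which is exactly what the paper does, invoking the same remark after \thref{def::op-trace} to justify restricting the supremum in \eqref{eq::trace1} to such families. Your additional observation that either hypothesis in (3) forces $p\leq C q$ (so that part (2) applies and $\overline{u}$ is defined) is a point the paper glosses over, but it changes nothing in substance.
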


\proof\

(1) For any $v\in V$, let us set for convenience $[v]_q:=v+\ker(q)$ and $[v]_p:=v+\ker(p)$. Recalling the notation and the properties introduced in \thref{rem::hilbertian}, it is easy to see that $\ker(q)\subseteq\ker(p)$ implies (1), because under this assuption $[x]_q=[y]_q$ implies $x-y\in\ker(p)$ and so $[x]_p=[y]_p$, i.e. $u([x]_q)=u([y]_q)$.

Moreover, suppose that $u$ is injective and that there exists $v\in\ker(p)\setminus\ker(q)$. Then $[v]_q\neq[0]_q$ and $[v]_p=[0]_p$. Hence, on the one hand the injectivity of $u$ ensures that $u([v]_q)\neq [0]_p$, but on the other hand $u([v]_q)=[v]_p=[0]_p$ which leads to a contradiction. Conversely, if $\ker(p)=\ker(q)$, then $u$ is the identity which is clearly injective.

 (2) Suppose there exists $C>0$ such that $p\leq C q$. Then $\ker(q)\subseteq\ker(p)$ and so $u$ is well-defined by (1). Also, for any $[v]_q\in V_q$ we have $p(u([v]_q))=p([v]_p)=p(v)\leq Cq(v)=Cq([v]_q)$, i.e. $u$ is continuous and so can be uniquely extended to the completions giving the desired $\overline{u}$.

For proving the second part of (2), suppose that $\overline{u}$ is injective and let $(v_n)$ be a Cauchy sequence in $V_q$ s.t. $p(u(v_n))\to 0$. Then, by completeness, there exists $w\in\overline{V}_q$ such that $\overline{q}(v_n-w)\to 0$ and so, by continuity of $\overline{u}$, $\overline{p}(\overline{u}(v_n)- \overline{u}(w))\to 0$. Therefore, $\overline{p}(\overline{u}(w))\leq \overline{p}(\overline{u}(v_n)- \overline{u}(w))+\overline{p}(\overline{u}(v_n))=\overline{p}({u}(v_n)- \overline{u}(w))+{p}({u}(v_n))\to 0$, i.e. $\overline{p}(\overline{u}(w))=0$ that is $\overline{u}(w)=0$. Hence, the injectivity of $\overline{u}$ implies that $w=0$ and so that $q(v_n)=\overline{q}(v_n)=\overline{q}(v_n-w)\to 0$.

Conversely, suppose that for any Cauchy sequence $(v_n)$ in $V_q$ s.t. $p(u(v_n))\to 0$ we have $q(v_n)\to 0$. If $w\in\ker(\overline{u})\subseteq\overline{V}_q$, then there exists a Cauchy sequence $(w_n)$ in $V_q$ converging to $w$, i.e. $\overline{q}(w_n-w)\to 0$. By continuity of $\overline{u}$, we have that $\overline{p}(\overline{u}(w_n)- \overline{u}(w))\to 0$ but $\overline{u}(w)=0$ and so $p(u(w_n))=\overline{p}(\overline{u}(w_n))\to 0$. Hence, our assumption implies $q(w_n)\to 0$ and so $\overline{q}(w)\leq \overline{q}(w_n-w)+q(w_n)\to 0$, which is equivalent to $w=0$ and so provides the injectivity of $\overline{u}$.

 (3) directly follows from the following observation
 \begin{eqnarray*}
\mathrm{tr}(\overline{u}^\ast\overline{u})&\stackrel{\eqref{eq::trace1}}{=}& \sup_{e_1,\ldots,e_n}\sum_{i=1}^n\langle [e_i]_q,\overline{u}^\ast\overline{u}([e_i]_q)\rangle_{\overline{q}}\\
&=& \sup_{e_1,\ldots,e_n}\sum_{i=1}^n\langle\overline{u}([e_i]_q),\overline{u}([e_i]_q)\rangle_{\overline{p}}\\
&=& \sup_{e_1,\ldots,e_n}\sum_{i=1}^n\langle[e_i]_p,[e_i]_p\rangle_{\overline{p}}
= \sup_{e_1,\ldots,e_n}\sum_{i=1}^n\langle e_i,e_i\rangle_p
\stackrel{\text{Def.} \ref{def::trace}}{=}\mathrm{tr}(p/q),
\end{eqnarray*}
where $n$ ranges over $\NN$ and $e_1,\ldots,e_n\in V$ ranges over the set of all finite sequences that are orthonormal w.r.t.\ $\langle\cdot,\cdot\rangle_q$.
\endproof

\begin{prop}\thlabel{prop::separable}
Let $p$ and $q$ be two Hilbertian seminorms on $V$.  If $\ker(p)=\ker(q)$ and $\mathrm{tr}(p/q)<\infty$, then $V_q$ is separable.
\end{prop}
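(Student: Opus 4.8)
The plan is to show that the inner product space $(V_q,q)$ admits no uncountable orthonormal subset, and then to extract from a maximal orthonormal system a countable dense set. The whole argument rests on converting the finiteness of the trace into a summability statement for orthonormal families.

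First I would exploit the trace bound. Since $\tr(p/q)<\infty$, \thref{def::trace} forces $\ker(q)\subseteq\ker(p)$ and gives, for every \emph{finite} $q$-orthonormal subset $E\subseteq V$, the estimate $\sum_{e\in E}p(e)^2\le\tr(p/q)$ (this is exactly the defining supremum, and by \thref{prop::trace}-\ref{properties-trace::1} it is consistent with $p\le\sqrt{\tr(p/q)}\,q$). Passing to the supremum over finite subsets, every a priori infinite $q$-orthonormal family $E\subseteq V$ satisfies
\[
\sum_{e\in E}p(e)^2\le\tr(p/q)<\infty .
\]
Here the hypothesis $\ker(p)=\ker(q)$ enters decisively: each $e\in E$ has $q(e)=1$, hence $e\notin\ker(q)=\ker(p)$, so $p(e)>0$. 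A family of strictly positive numbers with finite sum must be countable, because the sets $\{e\in E: p(e)^2>1/k\}$ are finite and exhaust $E$ as $k\to\infty$. Thus every $q$-orthonormal subset of $V$ — equivalently, every orthonormal subset of the pre-Hilbert space $V_q$ — is at most countable.

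Next I would produce a countable dense subset. By Zorn's lemma I choose a maximal $q$-orthonormal system $E=\{e_n:n\in\NN\}$ in $V_q$, which is countable by the previous step; the rational linear combinations of the $e_n$ then form a countable set $D$. If $E$ is total, i.e. $\overline{\mathrm{span}(E)}^{\,q}=V_q$, then $\overline{D}^{\,q}=V_q$ and $V_q$ is separable, as desired.

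The main obstacle is precisely the totality of the maximal orthonormal system: in an \emph{incomplete} inner product space a maximal orthonormal set need not have dense span, so this step cannot be taken for granted. I would handle it by passing to the completion $\overline{V_q}$, which is a genuine Hilbert space (cf. \thref{rem::hilbertian}) and for which $V_q$ is separable if and only if $\overline{V_q}$ is; in a Hilbert space a maximal orthonormal system is automatically an orthonormal basis. What then requires care is that orthonormal families of $\overline{V_q}$ are still countable, and for this I would invoke \thref{prop::trace-HS}-(3): the canonical extension $\overline{u}\colon\overline{V_q}\to\overline{V_p}$ is Hilbert--Schmidt, so $\sum_i\|\overline{u}f_i\|^2\le\tr(p/q)<\infty$ for every orthonormal family $\{f_i\}$ in $\overline{V_q}$, forcing all but countably many $f_i$ into $\ker(\overline{u})$. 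Analysing $\ker(\overline{u})$ against the dense image of $V_q$ (where $u$ is injective by $\ker(p)=\ker(q)$, cf. \thref{prop::trace-HS}-(1)--(2)) is the delicate point on which the separability of $\overline{V_q}$, and hence of $V_q$, ultimately rests.
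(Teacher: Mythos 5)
Your opening step is correct, and it is exactly the engine of the paper's own proof: the trace bound gives $\sum_{e\in E}p(e)^2\le\tr(p/q)$ for every $q$--orthonormal family $E\subseteq V$, and $\ker(p)=\ker(q)$ forces $p(e)>0$ for each such $e$, so every $q$--orthonormal family is countable. Where you part ways with the paper is the passage from ``all orthonormal families are countable'' to ``separable''. The paper simply assumes $V_q$ non-separable, picks an \emph{uncountable orthonormal basis}, and contradicts the trace bound; you correctly sense that this passage is illegitimate when $(V_q,q)$ is incomplete (a maximal orthonormal system need not be total, indeed a non-separable pre-Hilbert space may have only countable orthonormal families), and you try to repair it via the completion $\overline{V_q}$ and the Hilbert--Schmidt extension $\overline{u}$. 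But your proof stops precisely at the decisive step: you never show that $\ker(\overline{u})$ is small, you only note that $u$ is injective on the dense subspace $V_q$ and label the rest ``the delicate point''. That is a genuine gap, not a formality: by the paper's own \thref{prop::trace-HS}-(2), injectivity of $u$ on $V_q$ does \emph{not} give injectivity of $\overline{u}$, and a closed subspace of a Hilbert space can be non-separable while meeting a dense subspace only in $\{0\}$.

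In fact no argument can close this gap, because the statement fails without a completeness-type assumption on $(V_q,q)$; your ``delicate point'' is exactly where it breaks. Take $H=\ell^2(\NN)\oplus\ell^2(\Gamma)$ over $\RR$ with $|\Gamma|=\aleph_1$, let $\{f_n\}$ be the standard basis of $\ell^2(\NN)$ and $Tx:=\sum_n n^{-1}\langle x,f_n\rangle f_n$, a Hilbert--Schmidt operator with $\ker T=\ell^2(\Gamma)$. By transfinite recursion over an enumeration $\{(z_\beta,\varepsilon_\beta)\}_{\beta<\omega_1}$ of (dense set of $H$) $\times$ (rational radii), one builds a \emph{dense} subspace $D\subseteq H$ with $D\cap\ell^2(\Gamma)=\{0\}$: writing $z_\beta=(s_\beta,k_\beta)$, pick $d_\beta\in\ell^2(\NN)$ with $\|d_\beta-s_\beta\|<\varepsilon_\beta$ and $d_\beta$ outside the span of $\{d_\gamma:\gamma<\beta\}$ (possible, since that span has countable Hamel dimension, hence is a proper subspace and has empty interior), set $\Phi(d_\beta):=k_\beta$, and let $D$ be the graph $\{d+\Phi(d)\}$ of the resulting linear map $\Phi$. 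Now put $V:=D$, $q:=\|\cdot\|_H$ and $p:=\|T(\cdot)\|_H$ on $D$. Both are Hilbertian, $\ker(p)=D\cap\ker T=\{0\}=\ker(q)$, and $\tr(p/q)\le\sum_n n^{-2}<\infty$; yet $V_q=D$ is dense in the non-separable $H$, hence non-separable. Here $\overline{V_q}=H$, $\overline{u}$ is $T$ (up to identifying $\overline{V_p}$ with $\overline{T(D)}$), and $\ker(\overline{u})=\ell^2(\Gamma)$ is non-separable: every orthonormal family of $V_q$ is countable, but no maximal one is total. So the proposition is only true with $(V_q,q)$ complete (which it is in the paper's sole application, the corollary that follows, applied to a Hilbert space quotient), and the paper's proof conceals this by asserting that a non-separable $V_q$ admits an uncountable orthonormal basis --- valid in Hilbert spaces, false in general. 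Your instinct located the weak spot exactly; what is missing is not a cleverer analysis of $\ker(\overline{u})$ but an additional hypothesis.
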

\proof
Suppose that $V_q$ is not separable. Then there exists $(e_j)_j\in J$ orthonormal basis of $V_q$ with $J$ uncountable. Since $q(e_j)=1$ for all $j\in J$ and $\ker(p)=\ker(q)$, we have that $p(e_j)>0$ for all $j\in J$. However, $\mathrm{tr}(p/q)<\infty$ implies that $\sup_{n\in\NN}\sup_{j_1, \ldots, j_n\in J}\sum_{k=1}^n p(e_{j_k})^2<\infty$ and so for all but countably many $n-$tuples in $(e_j)_j\in J$ we have $\sum_{k=1}^n p(e_{j_k})^2=0$, which contradicts the fact that $p(e_j)>0$ for all $j\in J$.
\endproof

\begin{cor}
Let $p$ and $q$ be two Hilbertian seminorms on $V$ s.t. there exists $C>0$ such that $p\leq Cq$ then $\overline{u}$ injective and Hilbert-Schmidt implies that $V_q$ is separable.
\end{cor}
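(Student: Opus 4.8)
The plan is to reduce this corollary to \thref{prop::separable}, which already yields separability of $V_q$ once one knows the two conditions $\ker(p)=\ker(q)$ and $\mathrm{tr}(p/q)<\infty$. So it suffices to extract both of these from the hypotheses that $p\leq Cq$ and that $\overline{u}$ is injective and Hilbert-Schmidt. The trace condition is immediate: by \thref{prop::trace-HS}-(3) the operator $\overline{u}$ is Hilbert-Schmidt precisely when $\mathrm{tr}(p/q)<\infty$, so the Hilbert-Schmidt hypothesis gives $\mathrm{tr}(p/q)<\infty$ directly.

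To obtain $\ker(p)=\ker(q)$, I would first recall that $p\leq Cq$ forces $\ker(q)\subseteq\ker(p)$, so that $u\colon V_q\to V_p,\ v+\ker(q)\mapsto v+\ker(p)$ is well-defined by \thref{prop::trace-HS}-(1). Since $V_q$ sits densely inside $\overline{V}_q$ and the continuous extension satisfies $\overline{u}(v+\ker(q))=u(v+\ker(q))=v+\ker(p)$ for every $v\in V$, the restriction $\overline{u}\restriction_{V_q}$ coincides with $u$, viewed through the isometric inclusion $V_p\hookrightarrow\overline{V}_p$. Consequently the injectivity of $\overline{u}$ on all of $\overline{V}_q$ forces the injectivity of $u$ on $V_q$. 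By the second assertion of \thref{prop::trace-HS}-(1), $u$ is injective if and only if $\ker(q)=\ker(p)$, and we therefore conclude $\ker(p)=\ker(q)$.

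With both $\ker(p)=\ker(q)$ and $\mathrm{tr}(p/q)<\infty$ established, \thref{prop::separable} applies directly and yields that $V_q$ is separable, which completes the argument. The only step requiring a little care is the identification of $\overline{u}\restriction_{V_q}$ with $u$ under the dense embedding $V_q\hookrightarrow\overline{V}_q$ and the isometry $V_p\hookrightarrow\overline{V}_p$; this is precisely what transfers the injectivity statement from the completion back down to $V_q$, where \thref{prop::trace-HS}-(1) can be invoked. Everything else in the proof is a verbatim citation of the preceding propositions, so I expect no genuine obstacle beyond making this identification explicit.
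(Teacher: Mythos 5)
Your proposal is correct and is essentially the proof the paper intends: the corollary is stated immediately after \thref{prop::separable} precisely because it follows by combining \thref{prop::trace-HS}-(1) and (3) (to convert injectivity of $\overline{u}$ into $\ker(p)=\ker(q)$ and the Hilbert--Schmidt property into $\tr(p/q)<\infty$) with \thref{prop::separable}. Your care in transferring injectivity from $\overline{u}$ down to $u$ via the isometric inclusion $V_p\hookrightarrow\overline{V}_p$ is exactly the right bookkeeping, and no further argument is needed.
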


\begin{cor}
Let $A$ be an algebra generated by a linear subspace $V\subseteq A$, and $L$ a normalized linear functional on $A$ such that $L(\sum A^2)\subseteq [0,\infty)$.
If $q$ is a Hilbertian seminorm $q$ on $V$ such that $\tr(s_L\restriction_V/q)<\infty$, then
the space $V/\ker(s_L)$ endowed with the quotient seminorm induced by $s_L$ (and also denoted by $s_L$ with a slight abuse of notation) is separable.
\end{cor}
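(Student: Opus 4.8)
The plan is to realize $V/\ker(s_L)$ as a subset of the range of the Hilbert--Schmidt operator canonically attached to the pair of seminorms $(s_L\!\restriction_V,q)$, and then to exploit that such a range is always separable.

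First I would set $p:=s_L\!\restriction_V$, which is a Hilbertian seminorm on $V$ because $L$ is $\sum A^2$--positive, so that $(a,b)\mapsto L(ab)$ is a symmetric positive semidefinite bilinear form; note that $V/\ker(s_L)=V/\ker(p)=V_p$ carries precisely the quotient seminorm denoted $s_L$ in the statement. Since $\tr(p/q)<\infty$, \thref{prop::trace}-\eqref{properties-trace::1} gives $p^2\leq\tr(p/q)\,q^2$, whence $p\leq\sqrt{\tr(p/q)}\,q$ and in particular $\ker(q)\subseteq\ker(p)$. By \thref{prop::trace-HS}-(1),(2) the canonical map $u\colon V_q\to V_p,\ v+\ker(q)\mapsto v+\ker(p)$, is well defined and continuous and extends uniquely to a continuous operator $\overline{u}\colon\overline{V}_q\to\overline{V}_p$ between the associated Hilbert spaces, with $u(V_q)=V_p$ by the very definition of $u$.

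Next I would invoke \thref{prop::trace-HS}-(3): since $\tr(p/q)<\infty$, the operator $\overline{u}$ is Hilbert--Schmidt, i.e.\ $\tr(\overline{u}^\ast\overline{u})<\infty$. The crucial observation is then that a Hilbert--Schmidt operator has separable range. Indeed, for any orthonormal basis $(e_i)_{i\in I}$ of $\overline{V}_q$ one has, by \eqref{eq::trace2}, $\sum_{i\in I}\overline{p}(\overline{u}(e_i))^2=\tr(\overline{u}^\ast\overline{u})<\infty$, which forces $\overline{u}(e_i)=0$ for all but countably many indices, say $i\in\{i_n:n\in\NN\}$. Expanding $x=\sum_{i\in I}\langle x,e_i\rangle_{\overline{q}}\,e_i$ and using the continuity of $\overline{u}$ gives $\overline{u}(x)\in\overline{\mathrm{span}}\{\overline{u}(e_{i_n}):n\in\NN\}$ for every $x\in\overline{V}_q$, so the range of $\overline{u}$ is contained in a separable closed subspace of $\overline{V}_p$. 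Since $V_p=u(V_q)\subseteq\overline{u}(\overline{V}_q)$ thus lies inside a separable subset of the metric space $\overline{V}_p$, and every subspace of a separable metric space is separable, I would conclude that $(V/\ker(s_L),s_L)=(V_p,p)$ is separable.

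The main subtlety to flag is that one cannot simply apply \thref{prop::separable} to $V_q$: that result requires $\ker(p)=\ker(q)$, whereas here only the inclusion $\ker(q)\subseteq\ker(p)$ is available, and indeed $V_q$ itself may fail to be separable, since $p$ vanishes on the possibly large subspace $\ker(p)/\ker(q)\subseteq V_q$, which the finiteness of $\tr(p/q)$ does not control. Passing to the completions and using the Hilbert--Schmidt structure of $\overline{u}$ is exactly what circumvents this obstruction, because $\overline{u}$ only ``sees'' a separable portion of $\overline{V}_q$ while still surjecting onto all of $V_p$.
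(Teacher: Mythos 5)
Your proof is correct, and it takes a genuinely different route from the paper's. The paper never brings in the operator $\overline{u}$ for this corollary: it works inside the completion $\overline{V}_q$ of $(V_q,q)$, extends $s_L$ there, passes to the quotient $\overline{V}_q/\overline{\ker(s_L)}$ (via the orthogonal decomposition $\overline{V}_q=\overline{\ker(s_L)}\oplus\overline{\ker(s_L)}^{\perp}$), shows by an approximation-plus-Gram--Schmidt argument that the induced quotient norms $\tilde{s}_L$ and $\tilde{q}$ still satisfy $\tr(\tilde{s}_L/\tilde{q})\leq\tr(s_L\restriction_V/q)+\varepsilon<\infty$, applies \thref{prop::separable} (which requires equal kernels) to that quotient pair, and finally descends to $V_q/\ker(s_L)\cong V/\ker(s_L)$ as a subspace of a separable metric space. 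You instead route everything through \thref{prop::trace-HS}-(3): the canonical map $\overline{u}\colon\overline{V}_q\to\overline{V}_p$ is Hilbert--Schmidt, a Hilbert--Schmidt operator has separable range (your orthonormal-basis argument, resting on \eqref{eq::trace2}, is the standard one and is complete), and $V_p=u(V_q)$ sits inside that range. Your route is shorter and avoids the paper's delicate approximation step; it also makes transparent why the injectivity hypothesis appearing in the corollary immediately preceding this one (which yields separability of the \emph{source} space $V_q$) can be dropped here: separability of the target side $V_p$ costs nothing, since the range of a Hilbert--Schmidt operator is separable regardless of its kernel. What the paper's longer argument buys is a by-product of some independent interest --- the finite-trace property survives the quotient construction, $\tr(\tilde{s}_L/\tilde{q})\leq\tr(s_L\restriction_V/q)$ --- and it stays entirely within the seminorm-trace formalism of \thref{def::trace}, never invoking the operator picture. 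Finally, the subtlety you flag is exactly right and is the same obstruction the paper's quotient construction is designed to circumvent: \thref{prop::separable} cannot be applied to the pair $(s_L\restriction_V,q)$ on $V$ because only $\ker(q)\subseteq\ker(s_L\restriction_V)$ is available, and $V_q$ itself may indeed fail to be separable.
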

\proof
Let us endow $V_q$ with the quotient seminorm induced by $q$, which we also denote by $q$ with a slight abuse of notation.  As $\tr(s_L\restriction_V/q)<\infty$, \thref{prop::trace}-(i) ensures the $q-$continuity of $s_L\restriction_V$ and so that $\ker(q) \subset \ker(s_L)$. Then the quotient seminorm induced on $V_q$ by $s_L\restriction_V$ actually reduces to itself, i.e. $\forall v\in V, \ \inf\{s_L(v+w): w\in\ker(q)\}=s_L(v)$, and can be continuously extended to a norm $\overline{p}_L$ on the completion $\overline{V_q}$ of $V_q$. Hence, both $(\overline{V_q}, \overline{p}_L)$ and $(\overline{V_q}, \overline{q})$ are Hilbert spaces.

Consider $\overline{\ker(s_L)}$ in $(\overline{V_q}, \overline{q})$ and denote by $\tilde{q}$ the quotient norm induced on $\overline{V_q}/\overline{\ker(s_L)}$ by $\overline{q}$ (respectively by $\tilde{s_L}$ the quotient norm induced on $\overline{V_q}/\overline{\ker(s_L)}$ by~$\overline{s_L}$). Then $\tilde{q}$ is a Hilbertian norm as $(\overline{V_q}, \overline{q})$ is a Hilbert space and we have the following orthogonal decomposition $\overline{V_q} = \overline{\ker(s_L)} \oplus \overline{\ker(s_L)}^\perp$. Then $(\overline{V_q}/\overline{\ker(s_L)}, \tilde{q})$ is a Hilbert space. Hence, denoting by $\pi$ the orthogonal projection $\overline{V_q}$ onto $\overline{\ker(s_L)}^\perp$, we get $\overline{V_q}/\ker(\pi)\cong\pi(\overline{V_q})$, i.e. $\overline{V_q}/\overline{\ker(s_L)}\cong\overline{\ker(s_L)}^\perp$. Exploiting this isomorphism and the fact that $\ker(s_L)$ is closed in $(\overline{V_q}, \overline{q})$, it is easy to see that any finite $\tilde{q}$-orthonormal subset $\{\tilde{e_j}\}_{j=1, \ldots , J}$ with $J \in \mathbb{N}$ in $\overline{V_q}/\overline{\ker(s_L)}$ provides a finite $\overline{q}-$orthonormal subset $\{h_j\}_{j=1, \ldots , J}:=\pi^{-1}\left(\{\tilde{e_j}\}_{j=1, \ldots , J}\right)$ in $\overline{V_q}$. By density, for each $n\in \mathbb{N}$ and each $j \in \{1, \ldots , J\}$, we can choose $h_j^{(n)} \in V$ such that ${\overline{q}}(h_j - h_j^{(n)}) \leq 1/n$. Orthogonalizing $\{h^{(n)}_j\}_{j \in \{1, \ldots , J\}}$ via the Gram-Schmidt process, we obtain a $q-$orthogonal subset $\{e_j^{(n)}\}_{j \in \{1, \ldots , J\}}$ in $V$ defined inductively by $e^{(n)}_1 := h^{(n)}_1$ and ${e}^{(n)}_k :=h^{(n)}_k - \sum_{j=1}^{k-1} {\frac{\langle h^{(n)}_k , e^{(n)}_j\rangle_q}{\langle e^{(n)}_j , e^{(n)}_j\rangle_q}} e^{(n)}_j$ for all $k\geq 2$. Defining $\tilde{e}^{(n)}_k:=\frac{{e}^{(n)}_k}{q({e}^{(n)}_k)}$ for all $k\in\NN$, we get a $q-$orthonormal subset in V. So for each $k \in \{ 1, \ldots , J\}$ as $n\to\infty$ we get inductively that $\overline{q}({e}^{(n)}_k - h_k)\to 0$  and hence $\overline{s_L}({e}^{(n)}_k - h_k)\to 0$. Thus, for each $\varepsilon >0$ there exists $N\in \NN$ such that  $\overline{s_L}(e^{(N)}_j - h_j) \leq \varepsilon/J$ for all $j$ and so
$$
\sum_{j=1, \ldots , J}\tilde{s_L}(\tilde{e_j})=\sum_{j=1, \ldots , J}\overline{s_L}(h_j)\leq \sum_{j=1, \ldots , J}s_L(e^{(N)}_j) + \varepsilon \leq \tr(s_L\restriction_V/q)+ \varepsilon.
$$
As this holds for an arbitrary finite $\tilde{q}$-orthonormal subset $\{\tilde{e_j}\}_{j=1, \ldots , J}$, we have by the definition of the trace that $\tr(\tilde{s_L}/\tilde{q}) \leq \tr(s_L\restriction_V/q)+ \varepsilon$, which together with $\tr(s_L\restriction_V/q)<\infty$ implies $\tr(\tilde{s_L}/\tilde{q})<\infty$. Then, since the kernel of $\tilde{s_L}$ in $\overline{V_q}/{\ker(s_L)}$ is clearly trivial, so it is the kernel of $\tilde{q}$ in $\overline{V_q}/{\ker(s_L)}$. Hence, \thref{prop::separable} provides that $\left(\overline{V_q}/{\ker(s_L)}\right)/\ker(\tilde{q})$ is separable, i.e. $\overline{{V_q}}/{\ker(s_L)}$ is separable. As the latter space is also metric, we have that its subspace ${V_q}/{\ker(s_L)}$ is also separable. Moreover,  since $\ker(q)\subseteq\ker(s_L)\subseteq V$, we get that ${V_q}/{\ker(s_L)}\cong V /\ker(s_L)$ and so $V /\ker(s_L)$ is also separable.
\endproof

\subsection{Other definitions of nuclear space}\label{sec:app-nuclear}\

The definition of nuclear space used in this article, namely \thref{def::nuclear-space}, is due to Yamasaki but it is equivalent to the more traditional definitions of nuclear space due to Grothendieck \cite{Gr55} and Mityagin \cite{Mi61}, which we report here for the convenience of the reader (see e.g. \cite[Theorems~A.1, A.2]{Ya85} for a proof of these equivalences).

\begin{dfn}
\thlabel{def::nuclear-space-Grothendieck}
A TVS $(V,\tau)$ is called \emph{nuclear} if $\tau$ is induced by a family $\P$ of seminorms on $V$ such that for each $p\in\P$ there exists $q\in\P$ and there exist sequences $(v_n)_{n\in\NN}\subseteq V,(l_n)_{n\in\NN}\subseteq V^\ast$ with the following property
$$
\sum_{n=1}^\infty p(v_n)q^\prime(l_n)<\infty\text{ for all } v\in V\text{ with }v=\sum_{n=1}^\infty l_n(v)v_n\text{ w.r.t.\ }p,
$$
here $q^\prime$ denotes the dual norm of $q$.
\end{dfn}

\begin{dfn}
\thlabel{def::nuclear-space-Mityagin}
A TVS $(A,\tau)$ is called \emph{nuclear} if $\tau$ is induced by a family $\P$ of seminorms on $V$ such that for each $p\in\P$ there exists $q\in\P$ with
$
d_n(U_q,U_p)\in\mathcal{O}(n^{-\lambda})
$
for some $\lambda>0$, where $U_p$ denotes the (closed) semiball of $p$ and $d_n(U_q,U_p)$ denotes \emph{$n$--dimensional width of $U_q$ w.r.t.\ $U_p$}, that is $d_n(U_q,U_p)$ is defined as
$$
\inf\{c>0: U_q\subseteq V_{n-1}+cU_p\text{ for some }V_{n-1}\subseteq V\text{ with }\dim(V_{n-1})=n-1\}.
$$
\end{dfn}

Using \thref{prop::trace-HS}, it is easy to establish that \thref{def::nuclear-space} coincides with the following one.
\begin{dfn}\thlabel{def::nuclear-space2}
A TVS $(V,\tau)$ is called \emph{nuclear} if $\tau$ is induced by a directed family $\P$ of Hilbertian seminorms on $V$ such that for each $p\in\P$ there exists $q\in\P$ with $\ker(q)\subseteq \ker(p)$ and the continuous extension $\overline{u}\colon(\overline{V}_q,\overline{q})\to(\overline{V}_p,\overline{p})$ of the canonical map $u: (V_q, q)\to (V_p, p)$ is Hilbert-Schmidt, i.e. $\mathrm{tr}(\overline{u}^*\overline{u})<\infty$.\end{dfn}

This equivalent refomulation of \thref{def::nuclear-space} allows more easily to see its relation with the definitions of this concept given by Berezansky and Kondratiev in \cite[p.~14]{BeKo88} and by Schm\"udgen in \cite[p.~445]{Smu17} whose results we compare to ours in  Section \ref{Sec:3}.

\begin{dfn}[cf.\ {\cite[p.~14]{BeKo88}}]
\thlabel{def::nuclearBeKo}
Let $I$ be a directed index set and $(H_i,p_i)_{i\in I}$ a family of Hilbert spaces such that $V:=\bigcap_{i\in I }H_i$ is dense in each $(H_i,p_i)$ and for all $i,j\in I$ there exists $k\in I$ with $i,j\leq k$ and $(H_k,p_k)\subseteq (H_i,p_i)$  as well as $(H_k,p_k)\subseteq (H_j,p_j)$.  The space $V$ endowed with the topology $\tau$ induced by $\P:=\{p_i:i\in I\}$ is called nuclear if for each $i\in I$ there exists $j\geq i$ in $I$ such that the embedding $(H_j,p_j)\subseteq (H_i,p_i)$ is Hilbert-Schmidt.
\end{dfn}

\begin{rem}\thlabel{bk-nucl}
Berezansky and Kondratiev's \thref{def::nuclearBeKo} of nuclear space is covered by \thref{def::nuclear-space2} (and thus, by \thref{def::nuclear-space}). Indeed, let $(V,\tau)$ be a nuclear space with defining family $(H_i,p_i)_{i\in I}$ in the sense of \thref{def::nuclearBeKo}. Since for each $i\in I$ we have that $\ker(p_i)=\{o\}$, we get $V_{p_i}=V$. This together with the fact that $V$ is dense in each $(H_i,p_i)$ ensures that the completion $(\overline{V}_{p_i},\overline{p}_i)$ is isomorphic to $(H_i,p_i)$ for all $i\in I$. Thus, for $i\leq j$ in $I$ the embedding $(H_j,p_j)\subseteq (H_i,p_i)$, which is Hilbert-Schmidt by assumption, coincides with $\overline{u}\colon (\overline{V}_{p_j},\overline{p}_j)\to(\overline{V}_{p_i},\overline{p}_i)$. Hence, $(V,\tau)$ is nuclear in the sense of \thref{def::nuclear-space2}.
\end{rem}

\begin{dfn}[c.f. \ {\cite[p.~445]{Smu17}}]
\thlabel{dfn-nuclear-Schmu}
Let $(H_n, p_n)_{i\in \NN}$ be a sequence of Hilbert spaces such that $(H_n,p_n)\subseteq(H_m,p_m)$ for all $m\leq n$ in $\NN$. The space $V:=\bigcap_{n=1}^\infty H_n$ endowed with the topology $\tau$ induced by $\P:=\{p_n:n\in\NN\}$ is called nuclear if for each $m\in\NN$ there exists $n\in\NN$ such that the embedding $(H_n,p_n)\subseteq(H_m,p_m)$ is nuclear (see \thref{def::HS+nucl}-(2)).
\end{dfn}

\begin{rem}\thlabel{schm-nucl}
Schm\"udgen's \thref{dfn-nuclear-Schmu} of nuclear space is covered by \thref{def::nuclear-space2} (and thus, by \thref{def::nuclear-space}). Indeed, let $(V,\tau)$ be a nuclear space with defining family $(H_n,p_n)_{n\in\NN}$ in the sense of \thref{dfn-nuclear-Schmu}. For each $n\in \NN$, since $\ker(p_n)=\{o\}$, we get that $V_{p_n}=V\subseteq H_n$ and so that the completion $(\overline{V}_{p_n},\overline{p}_n)$ is isomorphic to a closed subspace of $(H_n,p_n)$. As the embedding $(H_m,p_m)\subseteq (H_n,p_n)$ is nuclear, also its restriction $r$ to $\overline{V}_{p_m}$ is nuclear by \thref{restr-nuclear}. The continuity of $r$ guarantees that $r(\overline{V}_{p_m})\subseteq\overline{r(V_{p_m})}=\overline{r(V)}=\overline{V}=\overline{V}_{p_n}$ and so the map $r$ coincides with $\overline{u}\colon(\overline{V}_{p_m},\overline{p}_m)\to(\overline{V}_{p_n},\overline{p}_n)$. Hence, $\overline{u}$ is nuclear. Then \cite[Theorem 48.2]{Tr67} ensures that $\mathrm{tr}(\sqrt{\overline{u}^\ast \overline{u}})<\infty$, i.e. $\overline{u}$ is a trace-class operator (see \thref{def::trace-class}). Since the family of trace-class operators on a Hilbert space forms an ideal in the space of bounded operators on the same space (see e.g. \cite[Theorem VI.19]{Reed-Simon}), we have that $tr(\overline{u}^\ast \overline{u})<\infty$, i.e. $\overline{u}$ is Hibert-Schmidt. Thus, $(V,\tau)$ is also nuclear in the sense of \thref{def::nuclear-space2}.
\end{rem}

%
%

\subsection{Two auxiliary results}\label{aux}\

We provide here a proof of \eqref{eq-sigma-algebras}, which we used in the proof of \thref{MainThm-supp_2} as well as a result \thref{lem:tobbi} about dense subalgebras of topological algebras which we exploited in the analysis of \thref{cor-appl-lemma}.
\begin{proof}[Proof of \eqref{eq-sigma-algebras}]
For notational convenience, let $\tau^A:=\tau_{\sp(q)^A}$ and $\tau^B:=\tau_{\sp(q)^B}$.
We preliminarily observe that
$$q'(\alpha) := \sup_{a \in A\, : \, q(a) \leq 1} | \alpha(a) |  = \sup_{a \in B\, : \, q(a) \leq 1} | \alpha(a) |,\quad  \forall \alpha\in\sp(q)$$ and so $q'$ is lower semi-continuous w.r.t. both $ \tau^{A} $ and $ \tau^B$. Hence, all sublevel sets of $q'$ are closed in both $(\sp(q), \tau^{A})$ and $(\sp(q), \tau^{B})$, i.e. $B_n(q')^c\in \tau^A\cap \tau^B,$ for all $n\in\NN$, which gives in turn $B_n(q')\in \mathcal{B}(\tau^A)\cap \mathcal{B}(\tau^B).$ This together with the following two properties
\begin{enumerate}[label=(\roman*)]
\item $\tau^A\cap B_n(q')=\tau^B\cap B_n(q'), \quad \forall \ n\in\NN.$
\item $\mathcal{B}(\tau^C)\cap B_n(q')=\mathcal{B}(\tau^C\cap B_n(q')), \quad \forall \ n\in\NN, C\in\{A, B\}$.
\end{enumerate}
provide that
$$
\mathcal{B}(\tau^A)\cap B_n(q')\stackrel{(ii)}{=} \mathcal{B}(\tau^A\cap B_n(q'))\stackrel{(i)}{=}\mathcal{B}(\tau^B\cap B_n(q'))\stackrel{(ii)}{=}\mathcal{B}(\tau^B)\cap B_n(q')\subseteq \mathcal{B}(\tau^B).
$$
The latter ensures that if $Y\in \mathcal{B}(\tau^A)$ then $Y\cap B_n(q')\in \mathcal{B}(\tau^B)$ for all $n\in\NN$ and so that $Y=\bigcup_{n\in\NN}Y\cap B_n(q')\in \mathcal{B}(\tau^B)$, i.e. $\mathcal{B}(\tau^A)\subseteq \mathcal{B}(\tau^B)$. The opposite inclusion easily follows from $\tau^B \subset \tau^A$. Hence, $\mathcal{B}(\tau^A)= \mathcal{B}(\tau^B)$.
\end{proof}

\noindent It remains to show (i) and (ii).\\
\begin{proof}[Proof of (i)]
Let $n\in\NN$. Since $\tau^B \subset \tau^A$, we have that $\tau^A\cap B_n(q')\supseteq\tau^B\cap B_n(q').$ For the opposite inclusion, let $\alpha\in \sp(q)\cap B_n(q')$ and recall that for any $C\in\{A, B\}$ a basis of neighbourhoods of $\alpha$ in the topology $\tau^C\cap B_n(q')$ is given by $$\left\{U_{c_1, \ldots, c_k; \lambda}: k\in\NN, c_1,\ldots, c_k\in C, \lambda>0\right\},$$ where
\[
U_{c_1, \ldots, c_k; \lambda}(\alpha):= \{ \gamma \in\sp(q)\, : \, |\hat{c_j}(\gamma)-\hat{c_j}(\alpha)|< \lambda \ \mbox{for $j=1,\ldots,k$ and } q'(\gamma) < n \}
\]
We need to show that for any $k\in\NN$, $a_1,\ldots, a_k\in A$ and $\varepsilon>0$ there exist $b_1,\ldots, b_k\in B$ and $\delta>0$ such that $U_{b_1, \ldots, b_k; \delta}(\alpha)\subseteq U_{a_1, \ldots, a_k; \varepsilon}(\alpha)$.

Fixed $k\in\NN$, $a_1,\ldots, a_k\in A$ and $\varepsilon>0$, by the density of $B$ in $(A, q)$ we can always choose $b_1,\ldots, b_k\in B$ such that $q(a_j-b_j)<\frac{\varepsilon}{3n}$ for $j=1, \ldots, k$. Then taking $\delta<\frac{\varepsilon}{3}$ we have that for any $\beta\in U_{b_1, \ldots, b_k; \delta}(\alpha)$ and any $j\in\{1, \ldots, k\}$ the following holds:
\begin{eqnarray*}
|\beta(a_j)-\alpha(a_j) | &\leq& |\beta(a_j)-\beta(b_j)| + | \beta(b_j) - \alpha(b_j) | + | \alpha(b_j) - \alpha(a_j) | \\
&\leq& nq(a_j-b_j) + \delta +   nq(b_j-a_j) < \varepsilon
\end{eqnarray*}
i.e. $\beta \in U_{a_1, \ldots, a_k; \varepsilon}(\alpha)$ and hence $U_{b_1, \ldots, b_k; \delta}(\alpha)\subseteq U_{a_1, \ldots, a_k; \varepsilon}(\alpha)$.
\end{proof}

\begin{proof}[Proof of (ii)]
Let $n\in\NN$ and $C\in\{A, B\}$,

We have already showed that $B_n(q')\in\mathcal{B}(\tau^C)$ and so $\tau^C  \cap B_n(q')\subseteq\mathcal{B}(\tau^C)$, which in turn implies that $\mathcal{B}(\tau^C) \cap B_n(q') \supseteq \mathcal{B}(\tau^C \cap B_n(q'))$.

Now let $i: \sp(q)\cap B_n(q') \cap \rightarrow \sp(q)$ be the identity map. On the hand, the continuity of $i: \left(\sp(q)\cap B_n(q'), \tau^C \cap B_n(q')\right) \cap \rightarrow \left(\sp(q), \tau^C \right)$ provides that $i: \left(\sp(q)\cap B_n(q'), \mathcal{B}(\tau^C \cap B_n(q'))\right) \cap \rightarrow \left(\sp(q), \mathcal{B}(\tau^C )\right)$ is measurable. On the other hand, $\mathcal{B}(\tau^C) \cap B_n(q')$ is the smallest $\sigma-$algebra on $\sp(q)\cap B_n(q')$ making $i$ measurable.
 Hence, we have that $\mathcal{B}(\tau^C) \cap B_n(q') \subset \mathcal{B}(\tau^C \cap B_n(q'))$.

\end{proof}

\begin{lem}\thlabel{lem:tobbi}
Let $A$ be an algebra generated by a linear subspace $V\subseteq A$ and $\tau$ a topology on $A$ such that $(A, \tau)$ is a topological algebra. If $U$ is a subspace of $V$ which is dense in $(V, \tau\restriction_V)$, then $\langle U\rangle$ is dense in $(A, \tau)$.
\end{lem}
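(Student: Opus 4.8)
The plan is to prove that the $\tau$-closure $C:=\overline{\langle U\rangle}^{\tau}$ is a (unital) subalgebra of $A$ which contains $V$, and then to use that $V$ generates $A$ to force $C=A$; since $C$ is by definition the closure of $\langle U\rangle$, this is exactly the asserted density.

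First I would transfer the density from $V$ to $A$. Because $U$ is dense in $(V,\tau\restriction_V)$ and the closure in the subspace topology satisfies $\overline{U}^{\,\tau\restriction_V}=\overline{U}^{\,\tau}\cap V$, I obtain $V=\overline{U}^{\,\tau\restriction_V}\subseteq\overline{U}^{\,\tau}\subseteq\overline{\langle U\rangle}^{\,\tau}=C$, using $U\subseteq\langle U\rangle$. In particular $V\subseteq C$, and $1\in\langle U\rangle\subseteq C$. Next I would check that $C$ is a subalgebra. Closedness of $C$ under addition and scalar multiplication is the standard fact that the closure of a linear subspace of a topological vector space is again a subspace: for fixed $\lambda\in\RR$ the map $x\mapsto\lambda x$ and for fixed $a\in\langle U\rangle$ the translation $x\mapsto x+a$ are $\tau$-homeomorphisms carrying $\langle U\rangle$ into itself, hence carrying $C$ into $C$, and joint continuity of addition then yields $C+C\subseteq C$. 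For closedness under multiplication I would use continuity of the product $m\colon A\times A\to A$: from $m(\langle U\rangle\times\langle U\rangle)\subseteq\langle U\rangle$ together with $\overline{\langle U\rangle\times\langle U\rangle}=C\times C$ (closure of a product is the product of closures) and continuity of $m$, I get $m(C\times C)\subseteq\overline{m(\langle U\rangle\times\langle U\rangle)}\subseteq\overline{\langle U\rangle}^{\,\tau}=C$, i.e.\ $C\cdot C\subseteq C$.

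Finally I would conclude. Since $A$ is generated by $V$ we have $A=\langle V\rangle$; as $C$ is a subalgebra containing $V$ and the unit, it contains the subalgebra generated by $V$, so $A=\langle V\rangle\subseteq C$ and therefore $C=A$, which says precisely that $\langle U\rangle$ is $\tau$-dense in $A$. The only delicate point is the multiplicativity step, where one must be explicit about the continuity assumed of the product in a topological algebra. To keep the argument robust to the precise convention, I would phrase it so that mere \emph{separate} continuity already suffices: for each fixed $b\in\langle U\rangle$ the continuous map $x\mapsto xb$ sends $\langle U\rangle$ into $\langle U\rangle$, hence $C$ into $C$; then for each fixed $a\in C$ the continuous map $y\mapsto ay$ sends $\langle U\rangle$ into $C$, hence $C$ into $\overline{C}=C$, giving $C\cdot C\subseteq C$ exactly as before. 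Everything else is routine.
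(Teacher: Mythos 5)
Your proof is correct, and it is organized differently from the paper's. The paper argues element by element: it proves by induction on $n$, using nets and the separate continuity of multiplication twice per step, that every monomial $v_1\cdots v_n$ with $v_i\in V$ lies in $\overline{\langle U\rangle}$, and then concludes $\langle V\rangle\subseteq\overline{\langle U\rangle}=A$. You instead prove the structural statement that $C:=\overline{\langle U\rangle}^{\tau}$ is a closed unital subalgebra of $A$ containing $V$, and then invoke minimality of the subalgebra generated by $V$. The underlying mechanism is the same in both cases --- the two-step application of separate continuity (your maps $x\mapsto xb$ for $b\in\langle U\rangle$ and then $y\mapsto ay$ for $a\in C$ correspond exactly to the paper's two net limits $uw_\beta\to uv_2$ and $u_\alpha v_2\to v_1v_2$) --- but your packaging via $f(\overline{S})\subseteq\overline{f(S)}$ eliminates both the nets and the induction, and it buys one additional piece of rigor: the paper's final step from ``all monomials lie in $\overline{\langle U\rangle}$'' to ``$\langle V\rangle\subseteq\overline{\langle U\rangle}$'' tacitly uses that the closure is a linear subspace, which you verify explicitly from the TVS structure. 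Your instinct to phrase the multiplicativity step so that separate continuity suffices is also the right call, since that is precisely the convention the paper works with; the first version of your argument via $\overline{\langle U\rangle\times\langle U\rangle}=C\times C$ would require joint continuity and should be regarded only as the fallback you present it as, not the main line.
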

\proof
Let $w\in U$ and $v\in V$. Then there exists a net $(v_{\alpha})_{\alpha\in I}$ with $v_{\alpha}\in U$ such that $v_{\alpha}\to v$. Since the multiplication is separately continuous, we have that $wv_{\alpha}\to wv$ and hence $wv\in\overline{\langle U\rangle}$.

Now let us take also $u\in V$.

We will show by induction on $n$ that
\begin{equation}\label{ind}
v_1, \ldots, v_n\in V \Rightarrow v_1\cdots v_n\in \overline{\langle U\rangle}, \forall n\in\NN.
\end{equation}
which implies that $\langle V\rangle =\overline{\langle U\rangle}$ and so the conclusion $A=\overline{\langle U\rangle}$.

Let us first show the base case $n=2$. If $v_1, v_2\in V$ then
Then there exist nets $(u_{\alpha})_{\alpha\in I}$ and $(w_{\beta})_{\beta\in J}$ with $u_{\alpha}, w_{\beta}\in U$ such that $u_{\alpha}\to v_1$ and $w_{\beta}\to v_2$. Since the multiplication is separately continuous, for each $u\in U$, we have that $uw_{\beta}\to uv_2$ and hence $uv_2\in\overline{\langle U\rangle}$. In particular each $u_{\alpha}v_2\in\overline{\langle U\rangle}$ and, using again the separate continuity of the multiplication, $u_{\alpha}v_2\to v_1v_2$. Hence, $v_1v_2\in\overline{\langle U\rangle}$.

Suppose now that \eqref{ind} holds for a fixed $n$ and let $v_1, \ldots, v_{n+1}\in V$. Then there exists $(g_{\alpha})_{\alpha\in I}$ with $g_{\alpha}\in U$ such that $g_{\alpha}\to v_{n+1}$. Moreover, by inductive assumption, $v_1\cdots v_n\in \overline{\langle U\rangle}$ and so there exists $(h_{\beta})_{\beta\in J}$ with $h_{\beta}\in \langle U\rangle$ such that $h_{\beta}\to v_1\cdots v_n.$ Then for any $u\in U$, by the separate continuity of the multiplication, we have that $h_{\beta}u\to v_1\cdots v_n \cdot u$ and hence $v_1\cdots v_n \cdot u\in\overline{\langle U\rangle}$. In particular for each $\alpha\in I$ we get that $v_1\cdots v_n \cdot g_\alpha\in\overline{\langle U\rangle}$. Thus, using again the separate continuity of the multiplication, we obtain that $v_1\cdots v_n \cdot g_\alpha\to v_1\cdots v_n\cdot v_{n+1}$ and so $v_1\cdots v_n\cdot v_{n+1}\in\overline{\langle U\rangle}$.
\endproof

\section*{Acknowledgments}
We are indebted to the Baden--W\"urttemberg Stiftung for the financial support to this work by the Eliteprogramme for Postdocs. This work was also partially supported by the Ausschuss f\"ur Forschungsfragen~(AFF).  Maria Infusino is member of GNAMPA group of INdAM. Tobias Kuna is member of GNFM group of INdAM.

\end{document}